\numberwithin{equation}{section}
\theoremstyle{plain}
\newtheorem{thm}{Theorem}[section]
\newtheorem{cor}[thm]{Corollary}
\newtheorem{lemma}[thm]{Lemma}
\theoremstyle{definition}
\newtheorem{rem}[thm]{Remark}
\begin{document}

\title[Cylindrical critical Sobolev type inequality and identities]
{Cylindrical extensions of critical Sobolev type inequalities and identities
}

\author[M. Ruzhansky]{Michael Ruzhansky}
\address{
  Michael Ruzhansky:
 \endgraf
    Department of Mathematics: Analysis, Logic and Discrete Mathematics
  \endgraf
    Ghent University, Belgium
   \endgraf
  and
  \endgraf
  School of Mathematical Sciences
    \endgraf
    Queen Mary University of London, United Kingdom
   \endgraf
  {\it E-mail address} {\rm
michael.ruzhansky@ugent.be}
  }
\author[Y. Shaimerdenov]{Yerkin Shaimerdenov}
\address{
  Yerkin Shaimerdenov:
    \endgraf
  SDU University
Kaskelen, Kazakhstan
\endgraf
  and
  \endgraf
    Department of Mathematics: Analysis, Logic and Discrete Mathematics
    \endgraf
    Ghent University, Belgium
    \endgraf
    and
    \endgraf
    Institute of Mathematics and Mathematical Modeling, Kazakhstan
    \endgraf
  {\it E-mail address} {\rm yerkin.shaimerdenov@sdu.edu.kz}
  }
\author[N. Yessirkegenov]{Nurgissa Yessirkegenov}
\address{
  Nurgissa Yessirkegenov:
  \endgraf
    KIMEP University, Almaty, Kazakhstan
  \endgraf
  {\it E-mail address} {\rm nurgissa.yessirkegenov@gmail.com}
  }

\thanks{This research is funded by the Committee of Science of the Ministry of Science and Higher Education of the Republic of Kazakhstan (Grant No. AP23490970), by the FWO Odysseus 1 grant G.0H94.18N:
Analysis and Partial Differential Equations, and by the Methusalem programme of the Ghent University Special Research Fund (BOF) (Grant number 01M01021). MR is also supported by EPSRC
grant EP/V005529/1 and FWO grant G022821N}

\subjclass[2020]{26D10, 35A23, 46E35, 22E30} \keywords{critical Sobolev type inequality, critical Caffarelli-Kohn-Nirenberg inequality, homogeneous Lie group, stratified Lie group, Heisenberg group}

\begin{abstract}
In this paper, we investigate cylindrical extensions of critical Sobolev type (improved Hardy) inequalities and identities in the style of Badiale-Tarantello \cite{BT02}, which in a special case give a critical Hardy inequality and its stability results. We also obtain higher-order identities, which interestingly include well-known numbers like double factorial, Oblong numbers, and Stirling numbers of the second kind. All functional identities are obtained in $L^{p}$ for $p\in (1,\infty)$ without the real-valued function assumption, which gives a simple and direct understanding of the corresponding inequalities as well as the nonexistence
of nontrivial extremizers. As applications, we obtain Caffarelli-Kohn-Nirenberg type inequalities with logarithmic weights, which in a particular case give the critical case of the Heisenberg-Pauli-Weyl type uncertainty principle. We also discuss these results in the setting of Folland and Stein's homogeneous Lie groups. A special focus is devoted to stratified Lie groups, where Sobolev type inequalities become intricately intertwined with the properties of sub-Laplacians and more general subelliptic partial differential equations. The obtained results are already new even in the classical Euclidean setting with respect to the range of parameters and the arbitrariness of the choice of any homogeneous quasi-norm. Most inequalities are obtained with sharp constants. 
\end{abstract}

\maketitle

\section{Introduction}
Let us start by recalling the classical Sobolev inequality. For a function $f \in W^{1,q}(\mathbb{R}^n)$ with $1<q<n$ there holds
\begin{equation} \label{clas.sob}
\|f\|_{L^{p}(\mathbb{R}^n)} \le C \|\nabla f\|_{L^q(\mathbb{R}^n)}, 
\end{equation}
where $p = \frac{nq}{n - q},$ and $C$ is a positive constant that depends only on $n$ and $q$.

The following Sobolev type inequality was obtained in \cite{OS09} and \cite{BEHL08} by replacing the standard gradient in \eqref{clas.sob} with the Euler operator $x\cdot\nabla$:
\begin{equation} \label{sob.type}
\|f\|_{L^p(\mathbb{R}^n)} \leq \frac{p}{n} \|x \cdot \nabla f\|_{L^q(\mathbb{R}^n)}.
\end{equation}
For any $\lambda > 0$, by substituting $f(x) = h(\lambda x)$ into \eqref{sob.type}, one easily observes that $p = q$ is a necessary condition to have \eqref{sob.type}.

Now, let us recall the classical Hardy inequality.
For any function $f\in C^\infty_0(\mathbb{R}^n)$ and for $1<p<n,$ we have
\begin{equation} \label{har}
\left\|\frac{f}{|x|}\right\|_{L^p(\mathbb{R}^n)} \leq \frac{p}{n-p} \|\nabla f\|_{L^p(\mathbb{R}^n)}, 
\end{equation}
where $\frac{p}{n-p}$ is the sharp constant and $|x|$ is the Euclidean norm on $\mathbb{R}^n$.

In \cite{BT02}, the following extension of \eqref{har} was obtained:
Let $x=(x',x'') \in\mathbb{R}^N \times \mathbb{R}^{n-N}$, $2\leq N \leq n$. For any function $f\in C^\infty_0(\mathbb{R}^n)$ and for $1<p<N,$ we have
\begin{equation} \label{ext.har}
\left\|\frac{f}{|x'|}\right\|_{L^p(\mathbb{R}^n)} \leq \frac{p}{N-p} \|\nabla f\|_{L^p(\mathbb{R}^n)}, 
\end{equation}
where $|x'|$ is the Euclidean norm on $\mathbb{R}^N$ and $\frac{p}{N-p}$ was conjectured to be the sharp constant.
Sharpness of the constant was first proved in \cite{SSW03} and then in \cite{RS17b} by using alternative ways. The reason for this kind of extension was its application to a nonlinear PDE which was proposed as a model of the dynamics of galaxies by Bertin \cite{Ber00} and Ciotti \cite{Cio01}.

In \cite[Sect. 2.1.7, Corollary 3]{Maz11} Maz'ya  established the extended Hardy inequality \eqref{ext.har} with a remainder term when $p=2$: Let $n>2,2<q<\frac{2n}{n-2}$, and $\gamma=-1+n\left(\frac{1}{2}-\frac{1}{q}\right)$. Then there exists a positive constant $c$ such that
\begin{equation}\label{Maz_ineq}
\left(\frac{N-2}{2}\right)^{2}\left\|\frac{f}{|x'|}\right\|_{L^{2}(\mathbb{R}^{n})}^{2}\leq \|\nabla f\|_{L^{2}(\mathbb{R}^{n})}^{2}-c\left(\int_{\mathbb{R}^{n}}|u|^q|x'|^{\gamma q} \mathrm{~d} x\right)^{\frac{2}{q}}
\end{equation}
holds for any $f \in C_0^{\infty}\left(\mathbb{R}^{n}\right)$, subject to the condition $f(0, x'')=0$ in the case $N=1$. Note that the inequality \eqref{Maz_ineq} also implies the Sobolev inequality with a cylindrical weight when we drop the left-hand side.

Recently, in \cite{KY24} the following improved version of the inequality \eqref{sob.type} with a more general weight was obtained:
Let $x=(x',x'') \in \mathbb{R}^N\times\mathbb{R}^{n-N}, 1\leq N\leq n$ and $\alpha\in\mathbb{R}$. Then for any $f \in C^\infty_0(\mathbb{R}^n\backslash\{x'=0\})$, and all $1<p<\infty$, we have
\begin{equation} \label{sob.type with weights}
\frac{|N-\alpha p|}{p}\left\|\frac{f}{|x'|^\alpha}\right\|_{L^p(\mathbb{R}^n)}\leq\left\|\frac{x'\cdot\nabla_N f}{|x'|^\alpha}\right\|_{L^p(\mathbb{R}^n)},
\end{equation}
where $\nabla_N$ is the standard Euclidean gradient on $\mathbb{R}^N.$ If $\alpha\neq \frac{N}{p}$, then the constant $\frac{|N-\alpha p|}{p}$ is sharp. This result is also related to the inequalities of type \eqref{ext.har} and \eqref{Maz_ineq} in the sense that applying the Cauchy-Schwarz inequality to the right-hand side of \eqref{sob.type with weights} implies \eqref{ext.har} with a more general weight. Actually, the authors in \cite{KY24} obtained \eqref{sob.type with weights} with a sharp remainder. We also refer to \cite[Corollary 1.1 and 1.2]{DP21} for the $L^{2}$ case of \eqref{sob.type with weights} with a sharp remainder as well as to \cite{DN20} for cylindrical Hardy inequalities on half-spaces. 

Moreover, in the same work \cite{KY24}, the following Caffarelli–Kohn–Nirenberg type inequality was obtained:
Let $x=(x',x'') \in \mathbb{R}^N\times\mathbb{R}^{n-N}, 1\leq N\leq n$ and $\alpha\in\mathbb{R}$. Let $1 < p, q < \infty$, $0 < r < \infty$ with $p+q\geq r$ and $\delta \in [0, 1] \cap \left[\frac{r-q}{r},\frac{p}{r}\right]$ and $a,b,c \in \mathbb{R}$. Assume that $\frac{\delta r}{p} + \frac{(1-\delta)r}{q} = 1$ and $c = \delta(a-1) + b(1-\delta)$. Then we have the following Caffarelli-Kohn-Nirenberg type inequality for all $f \in C_0^\infty(\mathbb{R}^n\setminus\{{0}\})$:
\begin{equation} \label{CKN with weights}
\| |x'|^c f \|_{L^r(\mathbb{R}^n)}\leq \left|\frac{p}{N+p(a-1)}\right|^\delta \||x'|^a\nabla_N f\|^\delta_{L^p(\mathbb{R}^n)}\||x'|^b f\|^{1-\delta}_{L^q(\mathbb{R}^n)}.\end{equation}

Inequalities of \eqref{sob.type with weights} and \eqref{CKN with weights} type for the horizontal gradient with the Euclidean norm on the first stratum of a stratified Lie group were given in \cite{RSY17a}. We can refer to \cite{CCR15}, \cite{RY24b}, and \cite{RY24a} for more information on Hardy inequalities for hypoelliptic operators on stratified Lie groups, graded Lie groups, and general Lie groups, respectively.

In this paper, we investigate the critical case of inequalities \eqref{sob.type with weights} and \eqref{CKN with weights} when $N=\alpha p$ for the first inequality and $N=p(1-a)$ for the latter one. Moreover, we show their sharp remainder formulae as well as higher-order versions. Interestingly, the coefficients in higher-order identities include well-known numbers like double factorial, Oblong numbers, and Stirling numbers of the second kind. Higher order identities are obtained by iteration of the identity, including the Euler operator $x\cdot\nabla$. The relation between Stirling numbers of the second kind and the Euler operator was mentioned by H. Scherk in his Ph.D. thesis \cite{Sch1823} in 1823. He showed that the iteration of $x\frac{d}{dx}$ produces Stirling numbers of the second kind as coefficients, and one has 
\begin{equation} \label{Eul.op.Stirling}
\left(x\frac{d}{dx}\right)^m=\sum_{n=1}^{m}S(m,n)x^n
\left(\frac{d}{dx}\right)^n,
\end{equation}
where $S(m,n)$ are the Stirling numbers of second kind. To find more about relations between derivative operators and Stirling numbers, we refer to \cite{M10}, \cite{L00}, and the references therein.
Identity \eqref{Eul.op.Stirling} was studied in quantum
physics in analyzing creation and annihilation operators \cite{BF10},  \cite{BPS03}, \cite{MBP05}, and its combinatorial aspect was investigated in  \cite{EGH15} and
the references therein. Here, we show different relations of Stirling numbers of the second kind and the multidimensional Euler operator in the setting of $L^{2}$ space.  

We now summarise briefly the main results of this paper in the Euclidean setting: 
\begin{itemize}
    \item(\textbf{Critical $L^{p}$-Sobolev type inequality and identities}) 
    Let $x=(x',x'') \in \mathbb{R}^N\times\mathbb{R}^{n-N}, 1\leq N\leq n$, $|x'|$ is the Euclidean norm on $\mathbb{R}^N,$ $\nabla_N$ is the standard gradient on $\mathbb{R}^N$ and $1<p<\infty.$ 
    \begin{itemize}
        \item[(i)] Then for any complex-valued function $f \in C^\infty_0(\mathbb{R}^n\backslash\{{x'=0}\})$ we have 
        \begin{equation} \label{sob in intro}
        \left\|{\frac{f}{|x'|^\frac{N}{p} }}\right\|_{L^p(\mathbb{R}^n)}
        \leq p \left\|\frac{x'\cdot \nabla_N f}{|x'|^\frac{N}{p}}\log|x'|\right\|_{L^p(\mathbb{R}^n)},
        \end{equation}
        where $p$ is sharp.
        \item[(ii)]
        For any complex-valued function $f \in C^\infty_0(\mathbb{R}^n\backslash\{{x'=0}\})$  we have the following identity which is the sharp remainder formula for (\ref{sob in intro}):
\begin{multline} \label{identity in intro}
    \left\|\frac{f}{|x'|^\frac{N}{p}}\right\|_{L^p(\mathbb{R}^n)}^p=p^p\left\|\frac{\log|x'|(x'\cdot \nabla_N) f}{|x'|^\frac{N}{p}} \right\|_{L^p(\mathbb{R}^n)}^p-\\
    -\int_{\mathbb{R}^n}C_p\left(\frac{p\log|x'|(x'\cdot \nabla_N) f}{|x'|^\frac{N}{p}} ,\frac{p(\log|x'|)^{-\frac{1}{p}+1}}{|x'|^\frac{N}{p}}(x'\cdot\nabla_N)\left(f(\log|x'|)^\frac{1}{p}\right)\right)dx,
\end{multline}
where the functional $C_p$ is given by
\begin{equation}
C_p(u,v):=|u|^p-|u-v|^p-p|u-v|^{p-2}{\rm{Re}}((u-v)\cdot\overline{v})\geq0.\end{equation}
Moreover, for $p\geq2$, $C_p$ vanishes if and only if $$f=(\log|x'|)^{-\frac{1}{p}}\varphi\left(\frac{x'}{|x'|},x''\right)$$ 
for some function $\varphi:\mathbb{S}^{N-1}\times\mathbb{R}^{n-N}\rightarrow\mathbb{C}$, which makes the left-hand side of \eqref{identity in intro} infinite unless $\varphi=f=0$ on the basis of the non-integrability of the function
$$\frac{|f|^p}{|x'|^N}=|\log|x'||\frac{\left|\varphi\left(\frac{x'}{|x'|},x''\right)\right|^p}{|x'|^N}$$
on $\mathbb{R}^n$. Consequently, the sharp constant $p$ in \eqref{sob in intro} is not attained on $C^\infty_0(\mathbb{R}^n\backslash\{{x'=0}\}).$
    \end{itemize}  
    \item(\textbf{$L^{2}$-Higher order critical Sobolev type identity})
    Let $x=(x',x'') \in \mathbb{R}^N\times\mathbb{R}^{n-N}, 1\leq N\leq n,$ and $k\in \mathbb{N}.$  Then for any $f \in C^\infty_0(\mathbb{R}^n\backslash\{{x'=0}\})$, we have 

\begin{multline*} 
    \frac{4^k}{a_k} \left\|\frac{(\log|x'|)^k(x'\cdot \nabla_N)^k f}{|x'|^\frac{N}{2}}\right\|^2_{L^2(\mathbb{R}^n)}=  \left\|\frac{f}{|x'|^\frac{N}{2}}\right\|^2_{L^2(\mathbb{R}^n)} \\+ \sum_{m=1}^{k}\frac{O(k,m)}{a_k}\Bigg\|\sum_{l=0}^{m-1}S(m-1,l)\frac{(\log|x'|)^l(x'\cdot \nabla_N)^l f}{|x'|^\frac{N}{2}} \\+2\sum_{\kappa=1}^{m}S(m,\kappa)\frac{(\log|x'|)^\kappa(x'\cdot \nabla_N)^\kappa f}{|x'|^\frac{N}{2}}\Bigg\|^2_{L^2(\mathbb{R}^n)}, 
\end{multline*}
where $a_k$ is the {\bf \em square of the double factorial}, $O(k, m)$ is the coefficient that depends on {\bf \em Oblong numbers}, $S(m, \kappa)$ and $S(m-1,l)$ are {\bf \em Stirling numbers of second kind}.\\
\item(\textbf{$L^{2}$-Higher order critical Sobolev type inequality})
Let $x=(x',x'') \in \mathbb{R}^N\times\mathbb{R}^{n-N}, 1\leq N\leq n,$ and $k\in \mathbb{N}.$  Then for any complex-valued $f \in C^\infty_0(\mathbb{R}^n\backslash\{{x'=0}\})$, we have 
\begin{equation*} 
  \left\|\frac{f}{|x'|^\frac{N}{2}}\right\|_{L^2(\mathbb{R}^n)}\leq \frac{2^k}{a(k)} \left\|\frac{(\log|x'|)^k(x'\cdot \nabla_N)^k f}{|x'|^\frac{N}{2}}\right\|_{L^2(\mathbb{R}^n)},
\end{equation*}
where $a(k)=(2k-1)!!$ is the double factorial of an odd number, and $\frac{2^k}{a(k)}$ is the sharp constant which is non-attainable. 
    \item(\textbf{Critical Caffarelli–Kohn–Nirenberg type inequality})
    Let $x=(x',x'') \in \mathbb{R}^N\times\mathbb{R}^{n-N}$. Let $1<p,q<\infty$, $0<r<\infty$ with $p+q\geq r$ and $\delta \in [0, 1] \cap \left[\frac{r-q}{r},\frac{p}{r}\right]$ and $b,c \in \mathbb{R}$. Assume that 
$$\frac{\delta r}{p}+\frac{(1-\delta)r}{q}=1 \ \ \text{and}  \ \ c=-\frac{N}{p}\delta+b(1-\delta).$$
Then we have the following critical Caffarelli-Kohn-Nirenberg type inequality for all $f\in C^\infty_0(\mathbb{R}^n \backslash \{{0}\})$:    
\begin{equation}\label{crit_CKN_intro}
\||x'|^c f\|_{L^r(\mathbb{R}^n)}\leq p^\delta \||x'|^{-\frac{N}{p}} \log|x'| (x'\cdot\nabla_N)f \|^\delta_{L^p(\mathbb{R}^n)}\||x'|^bf\|^{1-\delta}_{L^q(\mathbb{R}^n)},
\end{equation}
where $|x'|$ is the Euclidean norm on $\mathbb{R}^N,$ $\nabla_N$ is the standard gradient on $\mathbb{R}^N$. The constant $p$ is sharp for $p=q$ with $\frac{N}{p}=-b$ or for $\delta=0,1.$ 
\item(\textbf{Critical uncertainty type principle})
    Let $x=(x',x'') \in \mathbb{R}^N\times\mathbb{R}^{n-N}, 1\leq N\leq n$.  Then for any $f \in C^\infty_0(\mathbb{R}^n\backslash\{{x'=0}\})$ and $\frac{1}{N}+\frac{1}{q}=1,$ we have
    \begin{equation*} 
\int_{\mathbb{R}^n}|f|^2dx\leq N\left\||x'|^{-1}\log|x'|(x'\cdot\nabla_N) f\right\|_{L^N(\mathbb{R}^n)} \left\||x'|f\right\|_{L^q(\mathbb{R}^n)}  .
    \end{equation*}
\end{itemize} 

Note that the constants in the critical $L^{p}$-Sobolev type \eqref{sob in intro} and Caffarelli-Kohn-Nirenberg type \eqref{crit_CKN_intro} inequalities do not depend on the topological dimension $n$ compared to their classical counterparts. All the above results are also obtained on stratified Lie groups for a horizontal gradient with an Euclidean norm on the first stratum of the group and on a general homogeneous Lie group for the radial derivative operator for any homogeneous quasi-norm.

Section 2 provides a quick overview of the key ideas related to stratified Lie groups and homogeneous Lie groups, and establishes the terminology. Section 3 demonstrates the critical Sobolev type inequality and identities. In Section 4, a higher-order critical Sobolev type identity is shown. In Section 5, as an application, Caffarelli-Kohn-Nirenberg type inequalities and an uncertainty principle with logarithmic weights are proved.

\section{Preliminaries}This section provides a brief overview of the notations used for stratified Lie groups and for homogeneous Lie groups.

A Lie group $\mathbb{G} = (\mathbb{R}^n, \circ)$ is called a stratified group (or a homogeneous Carnot group) if it satisfies the following criteria:
\begin{itemize}
\item There is a decomposition of $\mathbb{R}^n$ into $\mathbb{R}^{N}, \mathbb{R}^{N_2}, ..., \mathbb{R}^{N_r},$ where the sum of the dimensions equals $n$, $\mathbb{R}^{N}=\mathbb{R}^{N_1}$ and the dilation $\delta_{\lambda}:\mathbb{R}^{n}\rightarrow \mathbb{R}^{n}$ is an automorphism of the group $\mathbb{G}$. 
$$\delta_{\lambda}(x) = \delta_\lambda \left(x', x^{(2)}, \dots, x^{(r)} \right) := \left(\lambda x', \lambda^2 x^{(2)}, \dots, \lambda^r x^{(r)} \right)$$ for every $\lambda > 0$. Here $x'\equiv x^{(1)} \in \mathbb{R}^N$ and $x^{(k)} \in \mathbb{R}^{N_k}$
for $k = 2, \dots, r$.
\item 
Let $N$ be as previously stated and let $X_1, \dots, X_N$ be the left invariant vector fields on $\mathbb{G}$ such that $X_k(0) = \frac{\partial}{\partial x_k}|_0$ for $k = 1, \dots, N$. Then the rank of the Lie bracket of $X_1, \dots, X_N$ is equal to $n$ for every $x \in \mathbb{R}^n$, meaning that the iterated Lie brackets of $X_1, \dots, X_N$ span the Lie algebra of $\mathbb{G}$.
\end{itemize}
The group is defined as $\mathbb{G} = (\mathbb{R}^n, \circ, \delta_{\lambda})$. This kind of groups have been comprehensively investigated by Folland \cite{FOL75}. The left-invariant vector fields $X_1, ..., X_N$ are known as the (Jacobian) generators of the group and $r$ is referred to as the step of the group. The homogeneous dimension of the group is represented by $Q = \sum_{k=1}^r k N_k$, $N_1 = N$. The standard Lebesgue measure $dx$ on $\mathbb{R}^n$ serves as the Haar measure for the group (see, e.g. [\cite{FR16}, Proposition
1.6.6]). For more details on stratified Lie groups we refer to \cite{BLU07} or \cite{FR16}. The notation $\nabla_H$ represents the horizontal gradient, $\operatorname{div}_H\upsilon$ represents the horizontal divergence, and $|x'|$ represents the Euclidean norm on $\mathbb{R}^{N}$.

The left-invariant vector fields $X_j$ can be written as in formula (\ref{inv.vec.fields}) and satisfy the divergence theorem,  see e.g. \cite{RS17}  for the derivation of the exact formula, see also [\cite{FR16}, Section 3.1.5] for a general presentation. 
\begin{equation} \label{inv.vec.fields}
X_k = \frac{\partial}{\partial x'_k} + \sum_{l=2}^{r} \sum_{m=1}^{N_l} a^{(l)}_{k,m} (x', \dots, x^{l-1}) \frac{\partial}{\partial x_m^{(l)}}.    
\end{equation}
Formulae (\ref{hor.gradient}) and (\ref{div}) define the horizontal gradient and the horizontal divergence, respectively, 
\begin{equation} \label{hor.gradient}
    \nabla_H:=(X_1, \dots, X_N),
\end{equation}

\begin{equation} \label{div}
    \operatorname{div}_Hv:=\nabla_H\cdot v.
\end{equation}
 We will also use
 $|x'|=\sqrt{{x'_1}^2+\dots + {x'_N}^2}$ for the Euclidean norm on $\mathbb{R}^N$, and 
 
Now, let us give some necessary notations
concerning a general homogeneous Lie group following Folland and Stein \cite{FS82} (see also \cite[Section 3.1.7]{FR16} and \cite{RS19}).

A homogeneous quasi-norm on $\mathbb{G}$ is a continuous and non-negative function 
$$\mathbb{G}\ni x \rightarrow |x|\in [0,\infty),$$
which satisfies the following properties
\begin{itemize}
    \item $|x^{-1}|=|x|$ for all $x\in \mathbb{G},$
    \item $|\lambda x|=\lambda|x|$ for all $x\in \mathbb{G}$ and $\lambda>0,$
    \item $|x|=0$ if and only if $x=0.$
\end{itemize}

The polar decomposition, which is used in this paper, can be described as follows: there exists a unique positive Borel measure $\sigma$ defined on the unit sphere
$$\mathfrak{S}  :=\{x\in \mathbb{G}:|x|=1\},$$
such that for all $f\in L^1(\mathbb{G})$ we have
$$\int_\mathbb{G} f(x)dx=\int_0^\infty \int_\mathfrak{S}  f(ry) r^{Q-1}d\sigma (y)dr.$$

We will also use the radial derivative operator
$$\mathcal{R}_{|x|}f(x):=\frac{d}{d|x|}f(x)$$
with respect to any homogeneous quasi-norm $|x|$ on a homogeneous Lie group $\mathbb{G}.$

\section{$L^{p}$-Critical Sobolev type (improved Hardy) inequalities and identities}
In this section, we investigate the critical case of the Sobolev type (improved Hardy) inequality and identities on $\mathbb{R}^n$, stratified Lie groups, and homogeneous Lie groups. As a special case, we also obtain the critical case of the extended Hardy inequality and its stability result.
\vspace{4pt}

    \begin{thm} \label{th2.1}
    
    Let $x=(x',x'') \in \mathbb{R}^N\times\mathbb{R}^{n-N}, 1\leq N\leq n$, $1<p<\infty$, $|x'|$ is the Euclidean norm on $\mathbb{R}^N$ and $\nabla_N$ is the standard gradient on $\mathbb{R}^N$. 
    \begin{itemize}
        \item[(i)] Then for any complex-valued function $f \in C^\infty_0(\mathbb{R}^n\backslash\{{x'=0}\})$ we have 
        \begin{equation} \label{sob}
        \left\|{\frac{f}{|x'|^\frac{N}{p} }}\right\|_{L^p(\mathbb{R}^n)}
        \leq p \left\|\frac{\log|x'|(x'\cdot \nabla_N) f}{|x'|^\frac{N}{p}}\right\|_{L^p(\mathbb{R}^n)},
        \end{equation}
        where $p$ is sharp.
        \item[(ii)]
        For any complex-valued function $f \in C^\infty_0(\mathbb{R}^n\backslash\{{x'=0}\})$  we have the following identity, which is the sharp remainder formula for (\ref{sob}):
\begin{multline} \label{Cp rem. of crit. sob.}
    \left\|\frac{f}{|x'|^\frac{N}{p}}\right\|_{L^p(\mathbb{R}^n)}^p=p^p\left\|\frac{\log|x'|(x'\cdot \nabla_N) f}{|x'|^\frac{N}{p}} \right\|_{L^p(\mathbb{R}^n)}^p-\\
    -\int_{\mathbb{R}^n}C_p\left(\frac{p\log|x'|(x'\cdot \nabla_N) f}{|x'|^\frac{N}{p}} ,\frac{p(\log|x'|)^{-\frac{1}{p}+1}}{|x'|^\frac{N}{p}}(x'\cdot\nabla_N)\left(f(\log|x'|)^\frac{1}{p}\right)\right)dx,
\end{multline}
where the functional $C_p$ is given by
\begin{equation}\label{Cp}
C_p(u,v):=|u|^p-|u-v|^p-p|u-v|^{p-2}{\rm{Re}}((u-v)\cdot\overline{v})\geq0.\end{equation}
Moreover, for $p\geq2$, $C_p$ vanishes if and only if $$f=(\log|x'|)^{-\frac{1}{p}}\varphi\left(\frac{x'}{|x'|},x''\right)$$ 
for some function $\varphi:\mathbb{S}^{N-1}\times\mathbb{R}^{n-N}\rightarrow\mathbb{C}$, which makes the left-hand side of \eqref{Cp rem. of crit. sob.} infinite unless $\varphi=f=0$ on the basis of the non-integrability of the function
$$\frac{|f|^p}{|x'|^N}=\frac{\left|\varphi\left(\frac{x'}{|x'|},x''\right)\right|^p}{|\log|x'|||x'|^N}$$
on $\mathbb{R}^n$. Consequently, the sharp constant $p$ in \eqref{sob} is not attained \\on $C^\infty_0(\mathbb{R}^n\backslash\{{x'=0}\}).$
    \end{itemize}  
    \end{thm}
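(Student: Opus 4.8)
The plan is to reduce everything to a one-dimensional computation along radial lines via the polar decomposition $x' = r\omega$ with $r = |x'|$, $\omega \in \mathbb{S}^{N-1}$, so that the Euler operator becomes $x'\cdot\nabla_N f = r\,\partial_r f$. First I would substitute $g = g(r,\omega,x'') := f(r\omega,x'')$ and observe that the weight $|x'|^{-N/p}$ combined with the measure $r^{N-1}\,dr\,d\omega\,dx''$ collapses the radial density to $\frac{dr}{r}$; introducing the logarithmic coordinate $t = \log r \in \mathbb{R}$ turns $r\,\partial_r = \partial_t$ and $dr/r = dt$. In these coordinates \eqref{sob} becomes the claim that
\[
\int_{\mathbb{R}}|g(t)|^p\,dt \le p^p\int_{\mathbb{R}}|t\,\partial_t g(t)|^p\,dt
\]
for each fixed $(\omega,x'')$, which after integration over $\omega$ and $x''$ gives the full inequality. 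The key elementary identity is that $g(t) = t^{1/p}\,h(t)$ where $h = g\cdot t^{-1/p}$ behaves nicely, or more directly one writes $\frac{d}{dt}\bigl(|t|^{?}\cdots\bigr)$; the cleanest route is to take the one-dimensional weighted Hardy-type identity
\[
|g(t)|^p = -p\int \cdots
\]
obtained by integrating $\frac{d}{dt}$ of $\mathrm{sgn}(t)\,|g(t)|^p$ against the weight, and then invoke the pointwise convexity inequality encoded in $C_p \ge 0$ (the standard fact that $C_p(u,v)\ge 0$ for the functional in \eqref{Cp}) to drop the correction term and get \eqref{sob}; keeping it gives the identity \eqref{Cp rem. of crit. sob.}.

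More precisely, for part (ii) I would verify the algebraic identity directly: set $u = \frac{p\log|x'|\,(x'\cdot\nabla_N)f}{|x'|^{N/p}}$ and $v = u - \frac{f}{|x'|^{N/p}}$, so $u - v = \frac{f}{|x'|^{N/p}}$, and check that $v = \frac{p(\log|x'|)^{-1/p+1}}{|x'|^{N/p}}(x'\cdot\nabla_N)\bigl(f(\log|x'|)^{1/p}\bigr)$ by the product rule, since $(x'\cdot\nabla_N)(\log|x'|)^{1/p} = \frac{1}{p}(\log|x'|)^{1/p-1}$ (using $(x'\cdot\nabla_N)\log|x'| = 1$). Then \eqref{Cp rem. of crit. sob.} is equivalent, after integrating the definition \eqref{Cp} of $C_p$ over $\mathbb{R}^n$, to the vanishing of $\int |u-v|^{p} $ against a total-derivative-in-$t$ structure plus $\int |u-v|^{p-2}\mathrm{Re}((u-v)\cdot\bar v)\,dx$; this last integral is handled by the divergence theorem in the $x'$ variables (integration by parts using the radial field, exactly as in \cite{KY24} for \eqref{sob.type with weights}), which converts it into $\frac1p\int |u-v|^p\,dx = \frac1p\int\frac{|f|^p}{|x'|^N}\,dx$, producing the claimed balance. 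The compact support of $f$ away from $\{x'=0\}$ guarantees no boundary terms.

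For sharpness of the constant $p$ I would use the standard approximation argument: take $f$ approximating the formal extremizer $f_\varepsilon = (\log|x'|)^{-1/p}\chi(|x'|)\psi(x'/|x'|,x'')$ with a cutoff $\chi$ supported in a shrinking logarithmic window around a fixed scale and a bump $\psi$, compute that the ratio of the two sides tends to $p$ as the window widens (in the $t$-coordinate this is the classical fact that $\int|t|^{-1}\,dt$ over $[R, e^{R}]$-type intervals makes the Hardy constant sharp), and conclude $p$ cannot be improved. Finally, for non-attainment and the characterization of equality when $p\ge 2$: equality in \eqref{sob} forces $C_p \equiv 0$ pointwise, and the strict convexity behind $C_p$ (for $p\ge 2$, $C_p(u,v)=0 \iff v=0$) forces $v = 0$, i.e. $(x'\cdot\nabla_N)(f(\log|x'|)^{1/p}) = 0$, meaning $f(\log|x'|)^{1/p}$ is independent of $r$, hence $f = (\log|x'|)^{-1/p}\varphi(x'/|x'|,x'')$; substituting into the left side gives $\frac{|f|^p}{|x'|^N} = \frac{|\varphi|^p}{|\log|x'||\,|x'|^N}$, whose integral diverges near $|x'|=1$ (where $\log|x'|$ vanishes) unless $\varphi\equiv 0$, contradicting $f\in C_0^\infty(\mathbb{R}^n\setminus\{x'=0\})$ being a nonzero extremizer. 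The main obstacle I anticipate is bookkeeping the weights and the $(\log|x'|)$ powers carefully enough that the product-rule computation identifying $v$ and the integration-by-parts step producing the factor $1/p$ are airtight — particularly tracking signs of $\log|x'|$ (which changes sign at $|x'|=1$) so that the absolute values and $\mathrm{sgn}$ factors in $C_p$ and in the derivative identity match; everything else is routine once the change of variables $t=\log|x'|$ is in place.
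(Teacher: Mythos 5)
Your route is essentially the paper's. Part (i) is the same integration-by-parts-plus-H\"older argument based on $(x'\cdot\nabla_N)\log|x'|=1$ (the paper carries it out directly in $\mathbb{R}^n$; your polar/logarithmic change of variables $t=\log|x'|$ is only a repackaging of the same computation), and part (ii) uses the same device: the functional $C_p$ with the same choice of $u,v$, integration of its definition, cancellation of the cross term by the part (i) integration by parts, the bound $C_p(u,v)\geq c_p|v|^p$ for $p\geq 2$ (which the paper cites from \cite{CKLL24}; you assert it as standard) to characterize vanishing, and the same non-integrability argument for non-attainment. Your sharpness argument via cutoffs around the formal extremizer $(\log|x'|)^{-1/p}$ is a legitimate, slightly more careful variant of the paper's pointwise H\"older-equality observation for $h_1=(\log|x'|)^{-1/p}$.

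One concrete correction is needed in part (ii): you set $v=u-\frac{f}{|x'|^{N/p}}$, so that $u-v=\frac{f}{|x'|^{N/p}}$, and claim this $v$ coincides with the expression in the theorem. The product rule gives
\begin{equation*}
(x'\cdot\nabla_N)\left(f(\log|x'|)^{\frac1p}\right)=(\log|x'|)^{\frac1p}(x'\cdot\nabla_N)f+\frac1p\,f\,(\log|x'|)^{\frac1p-1},
\end{equation*}
hence the theorem's $v$ equals $u+\frac{f}{|x'|^{N/p}}$, i.e. $u-v=-\frac{f}{|x'|^{N/p}}$; with your sign the ``check by the product rule'' fails, and the identity you would derive is not \eqref{Cp rem. of crit. sob.}. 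Relatedly, the cross term is not ``converted into $\frac1p\int|u-v|^p\,dx$'': with the correct $v$ one has
\begin{equation*}
-p\int_{\mathbb{R}^n}|u-v|^{p-2}\,\mathrm{Re}\big((u-v)\cdot\overline{v}\big)\,dx
= p^2\int_{\mathbb{R}^n}\frac{|f|^{p-2}\log|x'|}{|x'|^{N}}\,\mathrm{Re}\big(f\,\overline{(x'\cdot\nabla_N)f}\big)\,dx
+ p\int_{\mathbb{R}^n}\frac{|f|^p}{|x'|^{N}}\,dx=0,
\end{equation*}
the two pieces cancelling exactly by the integration-by-parts identity of part (i); it is this exact cancellation, not a $\frac1p$-balance, that produces \eqref{Cp rem. of crit. sob.}. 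Once this sign is fixed, your argument coincides with the paper's proof.
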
 
\begin{rem}
The inequality \eqref{sob} is the critical case of the inequality from \cite{KY24} when $\alpha=N$.
\end{rem}
    
    \begin{proof}[Proof of Theorem \ref{sob}]
Using the identity $ x'\cdot \nabla_N \log|x'|=1$ and 
   integrating by parts we have
\begin{align} 
    \int_{\mathbb{R}^n} \frac{|f(x)|^p}{|x'|^N}dx &=\int_{\mathbb{R}^n} \frac{|f(x)|^p}{|x'|^N} (x'\cdot \nabla_N \log|x'|)dx \nonumber\\
    &=\int_{\mathbb{R}^n} \frac{|f(x)|^p}{|x'|^N} \left(\sum_{j=1}^N x'_j\cdot \partial_{x'_j} \log|x'|\right)dx\nonumber\\ 
    &=-\int_{\mathbb{R}^n}\sum_{j=1}^N\left[\log|x'| x'_j \partial_{x'_j}\left(\frac{|f(x)|^p}{|x'|^N}\right)+\log|x'|\frac{|f(x)|^p}{|x'|^N}\right]dx\nonumber\\
    &=-\int_{\mathbb{R}^n}\sum_{j=1}^N \bigg[ \log|x'|{\rm Re}\left(pf(x)|f(x)|^{p-2}\overline{x'_j\partial_{x'_j}f(x)}\right)|x'|^{-N}+\nonumber\\
    &+\log|x'||f(x)|^p(-N)|x'|^{-N-2}{x'_j}^2 +\log|x'|\frac{|f(x)|^p}{|x'|^N}\bigg]dx\nonumber\\    
    &=-p\int_{\mathbb{R}^n}\sum_{j=1}^N{\rm Re}\left(f(x)|f(x)|^{p-2}\overline{x'_j \partial_{x'_j}f(x)}\right)|x'|^{-N}\log|x'|dx. \label{proof of sob}
\end{align} 
Here using the H\"{o}lder inequality we calculate
\begin{align*}
    \int_{\mathbb{R}^n} \frac{|f(x)|^p}{|x'|^N}dx &=-p\int_{\mathbb{R}^n}\sum_{j=1}^N{\rm Re}\left(f(x)|f(x)|^{p-2}\overline{x'_j \partial_{x'_j}f(x)}\right)|x'|^{-N}\log|x'|dx\\
    &\leq |-p|\int_{\mathbb{R}^n} \frac{|f(x)|^{p-1}}{|x'|^N} |\log|x'|(x'\cdot \nabla_N f(x))| dx \\
    &= p\int_{\mathbb{R}^n} \frac{|f(x)|^{p-1}}{|x'|^{N(\frac{p-1}{p})}} \frac{|\log|x'|(x'\cdot \nabla_N f(x)) |}{|x'|^\frac{N}{p}}dx \\
    &\leq p\left(\int_{\mathbb{R}^n} \frac{|f(x)|^p}{|x'|^N}dx\right)^\frac{p-1}{p} \left(\int_{\mathbb{R}^n} \frac{|\log|x'|(x'\cdot \nabla_N f(x)) |^p}{|x'|^N}dx\right)^\frac{1}{p},
\end{align*}    
 which implies the inequality \eqref{sob} in Part (i). Now let us prove that $p$ is sharp. We note that the function 
    $$h_1(x)=\big(\log|x'|\big)^{-\frac{1}{p}}$$
    satisfies the following H\"{o}lder equality condition
    $$p^p\frac{|(x'\cdot \nabla_N h_1(x)) \log|x'||^p}{|x'|^N}=\frac{|h_1(x)|^p}{|x'|^N},$$
showing the sharpness of the constant.    

We now prove Part (ii).
To prove identity \eqref{Cp rem. of crit. sob.} we use non-negative functional \eqref{Cp} from \cite[Lemma 3.3]{CKLL24}. By using notations
$$u=\frac{p\log|x'|(x'\cdot \nabla_N) f(x)}{|x'|^\frac{N}{p}}, \ \ v=\frac{p(\log|x'|)^{-\frac{1}{p}+1}}{|x'|^\frac{N}{p}}(x'\cdot\nabla_N)\left(f(x)(\log|x'|)^\frac{1}{p}\right)$$
in \eqref{Cp}, and using that $u-v=-\frac{f(x)}{|x'|^\frac{N}{p}},$ we get
\begin{multline} \label{Cp proof}
\int_{\mathbb{R}^n}C_p\left(\frac{p\log|x'|(x'\cdot \nabla_N) f(x)}{|x'|^\frac{N}{p}} ,\frac{p(\log|x'|)^{-\frac{1}{p}+1}}{|x'|^\frac{N}{p}}(x'\cdot\nabla_N)\left(f(x)(\log|x'|)^\frac{1}{p}\right)\right)dx\\=\int_{\mathbb{R}^n}\left|\frac{p\log|x'|(x'\cdot \nabla_N) f(x)}{|x'|^\frac{N}{p}}\right|^pdx-\int_{\mathbb{R}^n}\left|-\frac{f(x)}{|x'|^\frac{N}{p}}\right|^pdx\\-p\int_{\mathbb{R}^n}\left|-\frac{f(x)}{|x'|^\frac{N}{p}}\right|^{p-2}{\rm{Re}}\left(-\frac{f(x)}{|x'|^\frac{N}{p}}\cdot\overline{\left(\frac{p\log|x'|(x'\cdot \nabla_N) f(x)}{|x'|^\frac{N}{p}}+\frac{f(x)}{|x'|^\frac{N}{p}}\right)}\right)dx\\=\int_{\mathbb{R}^n}\left|\frac{p\log|x'|(x'\cdot \nabla_N) f(x)}{|x'|^\frac{N}{p}}\right|^pdx-\int_{\mathbb{R}^n}\left|\frac{f(x)}{|x'|^\frac{N}{p}}\right|^pdx
\\+p\left({\rm{Re}}\int_{\mathbb{R}^n}\frac{|f(x)|^{p-2}}{|x'|^N}f(x)\cdot\overline{p\log|x'|(x'\cdot \nabla_N) f(x)}dx+\int_{\mathbb{R}^n}\frac{|f(x)|^p}{|x'|^N}dx\right).\end{multline}
Using \eqref{proof of sob} in \eqref{Cp proof}, the last term in \eqref{Cp proof} vanishes, and we get \eqref{Cp rem. of crit. sob.}.

Now, to check the existence of nontrivial extremisers, we examine the sharp remainder term on the right-hand side of \eqref{Cp rem. of crit. sob.}.
From \cite[Step 3 of Proof of Lemma 3.4]{CKLL24} we know that for $p\geq2$ we have $$C_p(u,v):=|u|^p-|u-v|^p-p|u-v|^{p-2}{\rm{Re}}((u-v)\cdot\overline{v})\geq c_p|v|^p$$
for some constant $c_p\in(0,1]$. Thus, we can easily see that $C_p(u,v)=0$ if and only if $v=0.$ Therefore, the remainder term vanishes if $f$ satisfies $$v=\frac{(\log|x'|)^{-\frac{1}{p}+1}}{|x'|^\frac{N}{p}}(x'\cdot\nabla_N)\left(f(x)(\log|x'|)^\frac{1}{p}\right)=0.$$
By integrating 
$$\frac{(x'\cdot\nabla_N)}{|x'|}\left(f(x)(\log|x'|)^\frac{1}{p}\right)=0$$ we find that $f(x)=(\log|x'|)^{-\frac{1}{p}}\varphi\left(\frac{x'}{|x'|},x''\right),$ 
where $\varphi:\mathbb{S}^{N-1}\times\mathbb{R}^{n-N}\rightarrow\mathbb{C}$.
But, this makes the left-hand side of \eqref{Cp rem. of crit. sob.} infinite unless $f=\varphi=0$, which proves nonexistence of nontrivial extremisers. 

Thus, the proof of Theorem \ref{sob} is complete.
\end{proof}
Using the Schwarz inequality on the right-hand side of (\ref{sob}) we obtain the critical case of the Hardy type inequality from \cite{KY24}. Let us show this in the next corollary.

\begin{cor} \label{hardy}
Let $x=(x',x'') \in \mathbb{R}^N\times\mathbb{R}^{n-N}, 1\leq N\leq n.$  Then for any $f \in C^\infty_0(\mathbb{R}^n\backslash\{{x'=0}\})$, and all $1<p<\infty,$ we have
 \begin{equation} \label{c.hardy} 
    \left\|{\frac{f}{|x'|^\frac{N}{p} }}\right\|_{L^p(\mathbb{R}^n)}
    \leq p \left\|\frac{\nabla_N f}{|x'|^{\frac{N}{p}-1}}\log|x'|\right\|_{L^p(\mathbb{R}^n)},
    \end{equation}
where $|x'|$ is the Euclidean norm on $\mathbb{R}^N,$ $\nabla_N$ is the standard gradient on $\mathbb{R}^N,$ and $p$ is sharp.
\end{cor}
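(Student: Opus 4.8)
The plan is to derive \eqref{c.hardy} directly from inequality \eqref{sob} of Theorem~\ref{th2.1}(i) by inserting the Schwarz inequality, exactly as indicated before the statement. Pointwise in $x$ with $x'\neq 0$, the Cauchy--Schwarz inequality for the Euclidean inner product on $\mathbb{R}^N$ gives $|x'\cdot\nabla_N f(x)|\le |x'|\,|\nabla_N f(x)|$, hence
\[
\frac{\bigl|\log|x'|\,(x'\cdot\nabla_N)f(x)\bigr|}{|x'|^{N/p}}\ \le\ \frac{|\log|x'||\,|\nabla_N f(x)|}{|x'|^{N/p-1}}.
\]
Raising to the $p$-th power, integrating over $\mathbb{R}^n$, and using monotonicity of the $L^p$-norm shows that the right-hand side of \eqref{sob} is bounded by $p\,\bigl\|\,|x'|^{1-N/p}\log|x'|\,\nabla_N f\,\bigr\|_{L^p(\mathbb{R}^n)}$, which is precisely the right-hand side of \eqref{c.hardy}. (Equivalently, one may re-run the computation in the proof of Theorem~\ref{th2.1}: after the integration by parts producing \eqref{proof of sob}, estimate $\bigl|\sum_j x'_j\partial_{x'_j}f\bigr|\le |x'|\,|\nabla_N f|$ and then apply H\"older's inequality with exponents $p/(p-1)$ and $p$ to the weights $|x'|^{-N(p-1)/p}$ and $|x'|^{1-N/p}$; this is the same argument repackaged.)

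For the sharpness of the constant $p$ I would use the same extremiser as in Theorem~\ref{th2.1}, namely a compactly supported smooth approximation --- localised away from $\{x'=0\}$, from the hypersurface $\{|x'|=1\}$, and from infinity --- of the function $h_1(x)=\bigl(\log|x'|\bigr)^{-1/p}$. The key point is that $h_1$ depends on $x'$ only through $|x'|$, so $\nabla_N h_1$ is a scalar multiple of $x'/|x'|$; consequently the Schwarz step above is an \emph{equality} for $h_1$, and the right-hand sides of \eqref{sob} and \eqref{c.hardy} coincide for it. Since $h_1$ already saturates the H\"older step in the proof of \eqref{sob}, the corresponding family of admissible functions makes the ratio of the two sides of \eqref{c.hardy} tend to $1/p$, so $p$ cannot be replaced by any smaller constant.

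The estimates here are entirely elementary, so the only mildly delicate point is the approximation argument making the sharpness claim rigorous: one must replace the non-smooth, non-compactly-supported $h_1$ by functions in $C^\infty_0(\mathbb{R}^n\setminus\{x'=0\})$ while keeping the quotient of the two sides of \eqref{c.hardy} arbitrarily close to $1/p$. This is handled in the standard way by cut-off and mollification, exploiting the slow (logarithmic-type) divergence of the relevant integrals near the excised regions, and I do not anticipate any genuine obstacle.
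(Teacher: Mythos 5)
Your argument is correct and is essentially the paper's own proof: the inequality \eqref{c.hardy} is obtained from \eqref{sob} by the pointwise Schwarz estimate $|x'\cdot\nabla_N f|\le |x'|\,|\nabla_N f|$. Your additional remark on sharpness, using the radial-in-$x'$ function $h_1=(\log|x'|)^{-1/p}$ for which the Schwarz step is an equality, goes slightly beyond what the paper writes in this corollary's proof but is the same reasoning the paper uses for Theorem \ref{th2.1}, so there is no discrepancy.
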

\begin{proof}[Proof of Corollary \ref{hardy}] We can write the inequality (\ref{sob}) in the following way:
    $$\left(\int_{\mathbb{R}^n}\left|\frac{f}{|x'|^\frac{N}{p}}\right|^pdx\right)^\frac{1}{p}
    \leq p \left(\int_{\mathbb{R}^n}\left|\frac{x'}{|x'|}\cdot\frac{\nabla_N f}{|x'|^{\frac{N}{p}-1}}\log|x'|\right|^pdx\right)^\frac{1}{p}.$$
    Then by the Schwarz inequality we obtain
    $$p \left(\int_{\mathbb{R}^n}\left|\frac{x'}{|x'|}\cdot\frac{\nabla_N f}{|x'|^{\frac{N}{p}-1}}\log|x'|\right|^pdx\right)^\frac{1}{p} \leq p \left(\int_{\mathbb{R}^n}\left|\frac{x'}{|x'|}\right|^p\left|\frac{\nabla_N f}{|x'|^{\frac{N}{p}-1}}\log|x'|\right|^pdx\right)^\frac{1}{p},$$
which implies (\ref{c.hardy}).   
\end{proof}Thus, (\ref{sob}) can be regarded as a refinement of (\ref{c.hardy}). \\

 In Theorem \ref{th2.1}, by taking $p=N$ and doing some calculations, we also obtain the critical case of the Hardy type inequality in the Badiale–Tarantello conjecture \cite{BT02}, and its stability result.
\begin{cor} \label{critical badialle}
    Let $x=(x',x'') \in \mathbb{R}^N\times\mathbb{R}^{n-N}, 2\leq N\leq n,$ $|x'|$ is the Euclidean norm on $\mathbb{R}^N.$ 
    \begin{itemize}
        \item[(i)] Then for any complex-valued function $f \in C^\infty_0(\mathbb{R}^n\backslash\{{x'=0}\})$ we have
\begin{equation} \label{c.hardy from badialle} 
    \left\|{\frac{f}{|x'|}}\right\|_{L^N(\mathbb{R}^n)}
    \leq N \left\|\log|x'|\nabla f\right\|_{L^N(\mathbb{R}^n)},
    \end{equation}
    where $N$ is sharp.
    \item[(ii)]
     For any complex-valued function $f \in C^\infty_0(\mathbb{R}^n\backslash\{{x'=0}\})$  we have the following inequality, which is the stability result for (\ref{c.hardy from badialle}):
\begin{multline} \label{rem. of crit. badialle.}
   N^N\left\|\log|x'|\nabla f \right\|_{L^N(\mathbb{R}^n)}^N \geq \left\|\frac{f}{|x'|}\right\|_{L^N(\mathbb{R}^n)}^N\\
   +\int_{\mathbb{R}^n}C_N\left(\frac{N\log|x'|(x'\cdot \nabla_N) f}{|x'|} ,\frac{N(\log|x'|)^{-\frac{1}{N}+1}}{|x'|}(x'\cdot\nabla_N)\left(f(\log|x'|)^\frac{1}{N}\right)\right)dx,
\end{multline}
where the functional $C_N$ is given by
\begin{equation*}
C_N(u,v):=|u|^N-|u-v|^N-N|u-v|^{N-2}{\rm{Re}}((u-v)\cdot\overline{v})\geq0.\end{equation*}
    \end{itemize}
\end{cor}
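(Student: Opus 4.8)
The plan is to read everything off from Theorem \ref{th2.1} with the special choice $p=N$ --- which is exactly why the range $2\le N\le n$ is imposed: it guarantees $p=N\in(1,\infty)$, indeed $p=N\ge 2$, the regime in which both parts of Theorem \ref{th2.1}, including the rigidity of $C_p$, apply --- and then to trade the Euler-type operator $x'\cdot\nabla_N$ for the full Euclidean gradient $\nabla$ by the Cauchy--Schwarz inequality, precisely as in the passage from \eqref{sob} to \eqref{c.hardy} in the proof of Corollary \ref{hardy}.

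For Part (i) I would put $p=N$ in \eqref{sob} and write $x'\cdot\nabla_N f=|x'|\,\frac{x'}{|x'|}\cdot\nabla_N f$, so that
\[
\left\|\frac{f}{|x'|}\right\|_{L^N(\mathbb{R}^n)}\le N\left\|\log|x'|\,\frac{x'}{|x'|}\cdot\nabla_N f\right\|_{L^N(\mathbb{R}^n)}.
\]
Since $\left|\frac{x'}{|x'|}\cdot\nabla_N f\right|\le|\nabla_N f|\le|\nabla f|$ pointwise --- the first inequality being Cauchy--Schwarz in $\mathbb{R}^N$ against the unit vector $x'/|x'|$, and the second being $|\nabla_N f|^2=\sum_{j=1}^{N}|\partial_{x'_j}f|^2\le\sum_{j=1}^{n}|\partial_{x_j}f|^2=|\nabla f|^2$ --- the right-hand side is at most $N\|\log|x'|\nabla f\|_{L^N(\mathbb{R}^n)}$, which is \eqref{c.hardy from badialle}.

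For Part (ii) I would put $p=N$ in the sharp remainder identity \eqref{Cp rem. of crit. sob.} (note that $|x'|^{N/p}=|x'|$ then) and move the nonnegative $C_N$-integral to the left, obtaining
\[
N^N\left\|\frac{\log|x'|(x'\cdot\nabla_N)f}{|x'|}\right\|_{L^N(\mathbb{R}^n)}^N=\left\|\frac{f}{|x'|}\right\|_{L^N(\mathbb{R}^n)}^N+\int_{\mathbb{R}^n}C_N(\,\cdot\,,\,\cdot\,)\,dx,
\]
where the two arguments of $C_N$ are literally those displayed in \eqref{rem. of crit. badialle.}. Applying the same pointwise bound $\left|\frac{x'\cdot\nabla_N f}{|x'|}\right|\le|\nabla f|$ inside the $L^N$-norm on the left-hand side (and changing nothing else) upgrades this equality to $N^N\|\log|x'|\nabla f\|_{L^N(\mathbb{R}^n)}^N\ge\left\|\frac{f}{|x'|}\right\|_{L^N(\mathbb{R}^n)}^N+\int_{\mathbb{R}^n}C_N(\,\cdot\,,\,\cdot\,)\,dx$, which is exactly \eqref{rem. of crit. badialle.}.

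The only step that is not pure bookkeeping, and where I expect the main (mild) difficulty, is the sharpness of the constant $N$ in \eqref{c.hardy from badialle}. The idea is that the near-extremisers already realising sharpness in \eqref{sob} --- regularised truncations of $h_1(x)=(\log|x'|)^{-1/N}$, which depend on $x$ only through $|x'|$ --- simultaneously saturate the two pointwise inequalities used above: for $f=f(|x'|)$ one has $x'\cdot\nabla_N f=|x'|\,\partial_{|x'|}f$, hence $\big|\frac{x'\cdot\nabla_N f}{|x'|}\big|=|\nabla_N f|$, and $\nabla f=\nabla_N f$ because $f$ is independent of $x''$; so the whole chain of inequalities collapses to equalities in the limit, and the sharp constant $N$ of \eqref{sob} is inherited by \eqref{c.hardy from badialle}. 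The technical points to handle with care are arranging the truncation so that the approximants stay in $C_0^\infty(\mathbb{R}^n\backslash\{x'=0\})$ while the ratio of the two sides of \eqref{c.hardy from badialle} still tends to $N$, and checking once that the constants coming from \eqref{Cp rem. of crit. sob.} at $p=N$ reproduce the arguments of $C_N$ in \eqref{rem. of crit. badialle.} verbatim.
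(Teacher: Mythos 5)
Your proposal is correct and follows essentially the same route as the paper: specialise Theorem \ref{th2.1} to $p=N$ and then replace the directional derivative by the full gradient via Cauchy--Schwarz (the paper phrases this through the embedding $x'_0=(x',0)$ and the bound $|x'_0\cdot\nabla f|/|x'_0|\le|\nabla f|$, which is exactly your pointwise chain $|\tfrac{x'}{|x'|}\cdot\nabla_N f|\le|\nabla_N f|\le|\nabla f|$). Your additional sketch of why the constant $N$ remains sharp (radial-in-$|x'|$ near-extremisers saturating both pointwise inequalities) is a sensible supplement, as the paper's proof of the corollary does not spell this out.
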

\begin{proof}[Proof of Corollary \ref{critical badialle}]
For $x'_0=(x',0)\in \mathbb{R}^n,$ one can get the identities:
\begin{equation} \label{x0}
    |x'|=|x'_0|, \ \ x'\nabla_N f(x) = x'_0\nabla f(x).
\end{equation} 
By taking $p=N$ in \eqref{sob} and using identities (\ref{x0}) and the Schwarz inequality, we obtain \eqref{c.hardy from badialle}.

Now let us prove \eqref{rem. of crit. badialle.}. In \eqref{Cp rem. of crit. sob.}, by taking $p=N$, using identities \eqref{x0} and then the Schwarz inequality yields
   
\begin{align}
   \int_{\mathbb{R}^n}C_N\Bigg( \frac{N\log|x'|(x'\cdot \nabla_N) f}{|x'|} ,\frac{N(\log|x'|)^{-\frac{1}{N}+1}}{|x'|}&(x'\cdot\nabla_N)\left(f(\log|x'|)^\frac{1}{N}\right) \Bigg) dx+\nonumber\\
    +\left\|\frac{f}{|x'|}\right\|_{L^N(\mathbb{R}^n)}^N&=N^N\left\|\frac{\log|x'|(x'\cdot \nabla_N) f}{|x'|} \right\|_{L^N(\mathbb{R}^n)}^N\nonumber\\ &=N^N\left\|\frac{\log|x'|(x'_0\cdot \nabla) f}{|x'_0|} \right\|_{L^N(\mathbb{R}^n)}^N\nonumber\\&\leq N^N\left\|\log|x'|\cdot \nabla f \right\|_{L^N(\mathbb{R}^n)}^N,
\end{align}
which implies \eqref{rem. of crit. badialle.}.
\end{proof}

Now, we present the results discussed above within the context of stratified Lie groups. The methods employed to obtain these results are not limited to the Euclidean setting; hence, we can extend our findings to the stratified Lie groups using the same approach. Consequently, for the sake of brevity, we will provide the following results without accompanying proofs.  
\begin{thm} \label{th6.1}
        Let $\mathbb{G}$ be a stratified group with N being the dimension of the first stratum, $Q$ is the homogeneous dimension of $\mathbb{G}$, $|x'|$ is the Euclidean norm on $\mathbb{R}^N,$ $\nabla_H$ is the horizontal gradient on $\mathbb{G}$ and $1<p<\infty.$ 
    \begin{itemize}
        \item[(i)] Then for any complex-valued function $f \in C^\infty_0(\mathbb{G}\backslash\{{x'=0}\})$ we have 
        \begin{equation} \label{sob.on.strt}
        \left\|{\frac{f}{|x'|^\frac{N}{p} }}\right\|_{L^p(\mathbb{G})}
        \leq p \left\|\frac{x'\cdot \nabla_H f}{|x'|^\frac{N}{p}}\log|x'|\right\|_{L^p(\mathbb{G})},
        \end{equation}
        where $p$ is sharp.
        \item[(ii)]
        For any complex-valued function $f \in C^\infty_0(\mathbb{G}\backslash\{{x'=0}\})$  we have the following identity, which is the sharp remainder formula for (\ref{sob.on.strt}):
\begin{multline} \label{rem. of crit. sob. on strtfd}
    \left\|\frac{f}{|x'|^\frac{N}{p}}\right\|_{L^p(\mathbb{G})}^p=p^p\left\|\frac{\log|x'|(x'\cdot \nabla_H) f}{|x'|^\frac{N}{p}} \right\|_{L^p(\mathbb{G})}^p-\\
    -\int_{\mathbb{G}}C_p\left(\frac{p\log|x'|(x'\cdot \nabla_H) f}{|x'|^\frac{N}{p}} ,\frac{p(\log|x'|)^{-\frac{1}{p}+1}}{|x'|^\frac{N}{p}}(x'\cdot\nabla_H)\left(f(\log|x'|)^\frac{1}{p}\right)\right)dx,
\end{multline}
where the functional $C_p$ is given by
\begin{equation*}
C_p(u,v):=|u|^p-|u-v|^p-p|u-v|^{p-2}{\rm{Re}}((u-v)\cdot\overline{v})\geq0.\end{equation*}
    \end{itemize}
    \end{thm}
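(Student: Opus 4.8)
The plan is to mirror the Euclidean argument of Theorem \ref{th2.1} verbatim, replacing the Euclidean coordinate derivatives $\partial_{x'_j}$ by the horizontal vector fields $X_j$ and the Lebesgue integral over $\mathbb{R}^n$ by the Haar-measure integral over $\mathbb{G}$. The only two structural facts one needs about $\mathbb{G}$ are: (a) the standard Lebesgue measure $dx$ is the Haar measure and the $X_j$ satisfy the divergence theorem (both stated in the Preliminaries via \eqref{inv.vec.fields}), so integration by parts with no boundary contribution is legitimate for $f\in C^\infty_0(\mathbb{G}\setminus\{x'=0\})$; and (b) the key pointwise identity $x'\cdot\nabla_H\log|x'| = \sum_{j=1}^N x'_j X_j \log|x'| = 1$ still holds, because $\log|x'|$ depends only on the first-stratum variable $x'$ and on $x'$ the vector fields act as $X_j = \partial_{x'_j} + (\text{terms involving }\partial_{x^{(l)}}$, $l\ge 2)$, and those extra terms annihilate any function of $x'$ alone. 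Hence $X_j\log|x'| = \partial_{x'_j}\log|x'| = x'_j/|x'|^2$ exactly as in $\mathbb{R}^n$.

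For Part (i), I would start from
\[
\int_{\mathbb{G}}\frac{|f|^p}{|x'|^N}\,dx=\int_{\mathbb{G}}\frac{|f|^p}{|x'|^N}\,(x'\cdot\nabla_H\log|x'|)\,dx,
\]
expand the dot product, integrate by parts in each $X_j$ (using the divergence theorem to discard boundary terms), differentiate the product $|f|^p|x'|^{-N}$ using $X_j|f|^p=p\,\mathrm{Re}(f|f|^{p-2}\overline{X_jf})$ and $X_j|x'|^{-N}=-N x'_j|x'|^{-N-2}$, and observe — exactly as in the computation leading to \eqref{proof of sob} — that the two terms carrying the factor $\log|x'|$ cancel against the left-hand side, leaving the clean identity
\[
\int_{\mathbb{G}}\frac{|f|^p}{|x'|^N}\,dx=-p\int_{\mathbb{G}}\sum_{j=1}^N \mathrm{Re}\!\left(f|f|^{p-2}\overline{x'_j X_j f}\right)|x'|^{-N}\log|x'|\,dx.
\]
Then Hölder's inequality with exponents $p/(p-1)$ and $p$, after writing $|x'|^{-N}=|x'|^{-N(p-1)/p}\cdot|x'|^{-N/p}$, gives \eqref{sob.on.strt}. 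Sharpness follows by the same test function $h_1=(\log|x'|)^{-1/p}$, which turns the Hölder step into an equality; here one checks $x'\cdot\nabla_H h_1 = -\tfrac1p(\log|x'|)^{-1/p-1}$ using again that $h_1$ depends only on $x'$.

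For Part (ii), I would plug
\[
u=\frac{p\log|x'|(x'\cdot\nabla_H)f}{|x'|^{N/p}},\qquad v=\frac{p(\log|x'|)^{-1/p+1}}{|x'|^{N/p}}(x'\cdot\nabla_H)\!\left(f(\log|x'|)^{1/p}\right)
\]
into the nonnegative functional \eqref{Cp} from \cite{CKLL24}, noting that by the Leibniz rule and $x'\cdot\nabla_H\log|x'|=1$ one gets $u-v=-f/|x'|^{N/p}$, and then expand $\int_{\mathbb{G}} C_p(u,v)\,dx$ exactly as in \eqref{Cp proof}; the displayed "last term" is precisely the right-hand side of the integration-by-parts identity above, so it vanishes and \eqref{rem. of crit. sob. on strtfd} drops out. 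I do not expect a serious obstacle: the only point requiring care is the justification that the extra lower-stratum components of $X_j$ contribute nothing when differentiating $\log|x'|$ and when integrating by parts, and that $C_0^\infty(\mathbb{G}\setminus\{x'=0\})$ functions genuinely have no boundary terms under the $X_j$-divergence theorem — both are immediate from \eqref{inv.vec.fields} and the stated Haar-measure property. (The nonattainment discussion, relying on $C_p(u,v)\ge c_p|v|^p$ for $p\ge2$, transfers verbatim, but the statement of Theorem \ref{th6.1} as written in the excerpt does not include it, so I would not need to reproduce it.)
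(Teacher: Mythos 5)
Your proposal is correct and is essentially the paper's own approach: the paper states Theorem \ref{th6.1} without proof precisely because it is obtained by repeating the Euclidean argument of Theorem \ref{th2.1}, replacing $\partial_{x'_j}$ by $X_j$ and using that the lower-stratum parts of the $X_j$ in \eqref{inv.vec.fields} annihilate functions of $x'$ alone, together with the divergence theorem for the Haar (Lebesgue) measure. Your verification of the key points ($X_j\log|x'|=x'_j/|x'|^2$, $X_jx'_j=1$, $u-v=-f/|x'|^{N/p}$, and the cancellation via the integration-by-parts identity) matches what the paper intends, so nothing further is needed.
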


\begin{rem}
The inequality \eqref{sob.on.strt} is the critical case of the inequality from \cite[Theorem 3.1]{RSY17a} when $\alpha = \frac{N}{p}$.
\end{rem}  

Using the Schwarz inequality on the right-hand side of \eqref{sob.on.strt}, we derive the critical case of the Hardy-type inequality presented in \cite[Corollary 3.3]{RSY17a}. We demonstrate this in the following corollary.

\begin{cor} \label{cor6.2}
Let $\mathbb{G}$ be a stratified group with N being the dimension of the first stratum. Then for any $f \in C^\infty_0(\mathbb{G}\backslash\{{x'=0}\})$, and all $1<p<\infty,$ we have
 \begin{equation} \label{c.hardy.onstrt} 
    \left\|{\frac{f}{|x'|^\frac{N}{p} }}\right\|_{L^p(\mathbb{G})}
    \leq p \left\|\frac{\nabla_H f}{|x'|^{\frac{N}{p}-1}}\log|x'|\right\|_{L^p(\mathbb{G})},
    \end{equation}
where $|x'|$ is the Euclidean norm on $\mathbb{R}^N$ and $\nabla_H$ is the horizontal gradient on $\mathbb{G}.$ Moreover, the constant $p$ is sharp.
\end{cor}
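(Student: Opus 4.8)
The plan is to deduce Corollary \ref{cor6.2} directly from the stratified Sobolev type inequality \eqref{sob.on.strt} of Theorem \ref{th6.1}(i), exactly mirroring the Euclidean argument used to pass from \eqref{sob} to \eqref{c.hardy} in Corollary \ref{hardy}. First I would rewrite the right-hand side of \eqref{sob.on.strt} by factoring the scalar product: since $x'\cdot\nabla_H f = \frac{x'}{|x'|}\cdot\bigl(|x'|\nabla_H f\bigr)$, the term inside the $L^p$ norm becomes
$$
\frac{\log|x'|\,(x'\cdot\nabla_H f)}{|x'|^{N/p}}
=\frac{x'}{|x'|}\cdot\frac{\log|x'|\,\nabla_H f}{|x'|^{N/p-1}}.
$$

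Next I would apply the Cauchy--Schwarz (Schwarz) inequality pointwise in the $\mathbb{R}^N$-variable: $\bigl|\frac{x'}{|x'|}\cdot w\bigr|\le \bigl|\frac{x'}{|x'|}\bigr|\,|w| = |w|$, with $w=\frac{\log|x'|\,\nabla_H f}{|x'|^{N/p-1}}$, because $\bigl|\tfrac{x'}{|x'|}\bigr|=1$. Integrating the resulting pointwise bound over $\mathbb{G}$ with the Haar (Lebesgue) measure and taking $p$-th roots gives
$$
\left\|\frac{\log|x'|\,(x'\cdot\nabla_H f)}{|x'|^{N/p}}\right\|_{L^p(\mathbb{G})}
\le \left\|\frac{\nabla_H f}{|x'|^{N/p-1}}\log|x'|\right\|_{L^p(\mathbb{G})}.
$$
Chaining this with \eqref{sob.on.strt} yields \eqref{c.hardy.onstrt}.

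For sharpness of the constant $p$, I would note that the functions realising equality degenerate the Schwarz step (one needs $\nabla_H f$ parallel to $x'$, i.e. $f$ radial in $x'$), while the constant $p$ in \eqref{sob.on.strt} is already sharp by Theorem \ref{th6.1}(i), attained in the limit by $h(x)=(\log|x'|)^{-1/p}$ which depends on $x'$ only through $|x'|$; for such radial-in-$x'$ profiles the Schwarz inequality is an equality, so no constant is lost and $p$ remains sharp in \eqref{c.hardy.onstrt}. The only mild subtlety — and the one place where a line of justification is genuinely needed rather than a verbatim transcription of Corollary \ref{hardy} — is that on a stratified group $\nabla_H$ involves the full left-invariant vector fields $X_1,\dots,X_N$ of \eqref{inv.vec.fields} rather than $\partial_{x_1'},\dots,\partial_{x_N'}$; however, since $|x'|$ depends only on the first-stratum coordinates and $X_k|x'| = \partial_{x_k'}|x'|$ by the structure of \eqref{inv.vec.fields}, the identity $x'\cdot\nabla_H\log|x'| = 1$ and the factorisation above go through unchanged, so the argument is not really an obstacle. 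Thus (\ref{sob.on.strt}) is a refinement of (\ref{c.hardy.onstrt}), just as in the Euclidean case.
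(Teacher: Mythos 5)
Your proposal is correct and follows essentially the same route as the paper: the paper obtains \eqref{c.hardy.onstrt} from \eqref{sob.on.strt} by exactly the pointwise Schwarz inequality argument you describe, which is the proof written out for the Euclidean Corollary \ref{hardy} and stated to carry over verbatim to the stratified setting. Your additional remarks---that $X_k$ acts as $\partial_{x'_k}$ on functions of the first-stratum variables so the factorisation is unaffected, and that the Schwarz step is an equality on the radial-in-$x'$ near-extremizer $(\log|x'|)^{-1/p}$ so the sharpness of $p$ is preserved---are correct and consistent with what the paper leaves implicit.
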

The inequality (\ref{sob.on.strt}) can be considered as an improvement of (\ref{c.hardy.onstrt}).
\\

By considering the inequality \eqref{sob.on.strt} with $p=N$ and performing certain calculations, we can deduce the critical case of the Hardy-type inequality in the Badiale–Tarantello conjecture \cite{BT02}, on the stratified Lie groups. Additionally, we obtain its corresponding stability result on the stratified Lie groups.
\begin{cor} \label{critical badialle on strtfd}
    Let $\mathbb{G}$ be a stratified group with $N\geq2$ being the dimension of the first stratum, $|x'|$ the Euclidean norm on $\mathbb{R}^N,$ $\nabla_H$ the horizontal gradient on $\mathbb{G}$.
    
    \begin{itemize}
        \item[(i)] Then for any complex-valued function $f \in C^\infty_0(\mathbb{G}\backslash\{{x'=0}\})$ we have
\begin{equation} \label{c.hardy from badialle on strtfd} 
    \left\|{\frac{f}{|x'|}}\right\|_{L^N(\mathbb{G})}
    \leq N \left\|\log|x'|\nabla_H f\right\|_{L^N(\mathbb{G})},
    \end{equation}
    where $N$ is sharp.
    \item[(ii)]
        For any complex-valued function $f \in C^\infty_0(\mathbb{G}\backslash\{{x'=0}\})$, we have 
\begin{multline} \label{rem. of crit. badialle. on strtfd}
   N^N\left\|\log|x'|\nabla_H f \right\|_{L^N(\mathbb{G})}^N \geq \left\|\frac{f}{|x'|}\right\|_{L^N(\mathbb{G})}^N\\
   +\int_{\mathbb{G}}C_N\left(\frac{N\log|x'|(x'\cdot \nabla_H) f}{|x'|} ,\frac{N(\log|x'|)^{-\frac{1}{N}+1}}{|x'|}(x'\cdot\nabla_H)\left(f(\log|x'|)^\frac{1}{N}\right)\right)dx,
\end{multline}
where the functional $C_N$ is given by
\begin{equation*}
C_N(u,v):=|u|^N-|u-v|^N-N|u-v|^{N-2}{\rm{Re}}((u-v)\cdot\overline{v})\geq0.\end{equation*}
    \end{itemize}
\end{cor}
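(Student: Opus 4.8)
The plan is to deduce Corollary~\ref{critical badialle on strtfd} from Theorem~\ref{th6.1} specialised to $p=N$, running the same argument that produced Corollary~\ref{critical badialle} from Theorem~\ref{th2.1}. The only additional ingredient is the pointwise Cauchy--Schwarz inequality in the first stratum: writing $x'\cdot\nabla_H f=\sum_{k=1}^N x'_k X_k f$ as the Euclidean inner product in $\mathbb{R}^N$ of $x'=(x'_1,\dots,x'_N)$ with $\nabla_H f=(X_1 f,\dots,X_N f)$, one has $|x'\cdot\nabla_H f|\le |x'|\,|\nabla_H f|$ pointwise on $\mathbb{G}\setminus\{x'=0\}$. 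Here no ``lifting'' to an $n$-vector is needed (unlike in the Euclidean Corollary~\ref{critical badialle}, where $x'_0=(x',0)$ was introduced), because $\nabla_H$ and $x'$ are both $N$-component objects.

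For part (i), set $p=N$ in \eqref{sob.on.strt} and apply the pointwise bound under the integral sign (the factor $|x'|^{-1}$ cancels), giving
\begin{equation*}
\left\|\frac{f}{|x'|}\right\|_{L^N(\mathbb{G})}\le N\left\|\frac{\log|x'|\,(x'\cdot\nabla_H f)}{|x'|}\right\|_{L^N(\mathbb{G})}\le N\bigl\|\log|x'|\,\nabla_H f\bigr\|_{L^N(\mathbb{G})},
\end{equation*}
which is \eqref{c.hardy from badialle on strtfd}. For sharpness I would test against the extremising family $h_1(x)=(\log|x'|)^{-1/N}$ already used in Theorem~\ref{th6.1}(i)/Theorem~\ref{th2.1}(i) (or its truncations in $C_0^\infty(\mathbb{G}\setminus\{x'=0\})$): since $h_1$ depends only on the Euclidean norm $|x'|$ of the first-stratum variable, the lower-order terms in \eqref{inv.vec.fields} annihilate it, so $X_k h_1=\partial_{x'_k}h_1$ and $\nabla_H h_1$ is parallel to $x'$ at each point; hence the Cauchy--Schwarz step is an equality along this family, and the sharp constant $N$ of \eqref{sob.on.strt} is inherited by \eqref{c.hardy from badialle on strtfd}.

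For part (ii), take $p=N$ in the remainder identity \eqref{rem. of crit. sob. on strtfd}, which rearranges to
\begin{equation*}
N^N\left\|\frac{\log|x'|\,(x'\cdot\nabla_H f)}{|x'|}\right\|_{L^N(\mathbb{G})}^N=\left\|\frac{f}{|x'|}\right\|_{L^N(\mathbb{G})}^N+\int_{\mathbb{G}}C_N(u,v)\,dx
\end{equation*}
with $u,v$ as in the statement; then bound the left-hand side from above by $N^N\|\log|x'|\,\nabla_H f\|_{L^N(\mathbb{G})}^N$ via the same pointwise Cauchy--Schwarz inequality, and rearrange to obtain \eqref{rem. of crit. badialle. on strtfd}. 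The nonnegativity $C_N(u,v)\ge0$ and the algebraic structure of $C_p$ in \eqref{Cp} carry over verbatim, since they do not involve the group at all. I expect no genuine obstacle here: all the analytic content sits in Theorem~\ref{th6.1}, whose proof rests on the divergence theorem for the $X_k$ (available from \eqref{inv.vec.fields}) exactly as in the Euclidean case, which is why the statement is recorded without a separate proof. The one point worth stating explicitly is that, as in Theorem~\ref{th2.1}, ``sharp'' here means optimal but non-attained, the formal extremiser $h_1$ failing to lie in $C_0^\infty(\mathbb{G}\setminus\{x'=0\})$.
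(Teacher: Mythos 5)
Your proposal is correct and follows essentially the same route as the paper: the paper records this corollary without proof precisely because it is obtained from Theorem \ref{th6.1} with $p=N$ exactly as Corollary \ref{critical badialle} is obtained from Theorem \ref{th2.1}, namely by the pointwise Cauchy--Schwarz bound $|x'\cdot\nabla_H f|\le |x'|\,|\nabla_H f|$ applied to \eqref{sob.on.strt} and to the rearranged identity \eqref{rem. of crit. sob. on strtfd}. Your observations that no lifting $x'_0=(x',0)$ is needed since $\nabla_H$ already has $N$ components, and that the radial extremising family $(\log|x'|)^{-1/N}$ makes the Cauchy--Schwarz step an equality because the vector fields \eqref{inv.vec.fields} reduce to $\partial_{x'_k}$ on functions of $x'$ alone, are accurate and consistent with the paper's treatment of sharpness.
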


Now, let us present our results on a general homogeneous Lie group.
\begin{thm} \label{th.on hom.}
        Let $\mathbb{G}$ be a homogeneous group of homogeneous dimension $Q$ and let $|\cdot|$ be any homogeneous quasi-norm on $\mathbb{G}$. Let $\mathcal{R}_{|x|}=\frac{d}{d|x|}$ be the radial derivative and $1<p<\infty$. Then for any complex-valued function $f \in C^\infty_0(\mathbb{G}\backslash\{{0}\})$  we have the following identity, which is the sharp remainder formula for (\ref{sob.on.hom}):
\begin{multline} \label{rem. of crit. sob. on hom}
    \left\|\frac{f}{|x|^\frac{Q}{p}}\right\|_{L^p(\mathbb{G})}^p=\left\|p\frac{\mathcal{R}_{|x|} f}{|x|^{\frac{Q}{p}-1}} \log|x|\right\|_{L^p(\mathbb{G})}^p-\\
    -\int_{\mathbb{G}}C_p\left(p\frac{\mathcal{R}_{|x|} f}{|x|^{\frac{Q}{p}-1}} \log|x|,\frac{p(\log|x|)^{-\frac{1}{p}+1}}{|x|^{\frac{Q}{p}-1}}\mathcal{R}_{|x|}\left(f(\log|x|)^\frac{1}{p}\right)\right)dx,
\end{multline}
where the functional $C_p$ is given by
\begin{equation*}
C_p(u,v):=|u|^p-|u-v|^p-p|u-v|^{p-2}{\rm{Re}}((u-v)\cdot\overline{v})\geq0.\end{equation*}
Moreover, for $p\geq2$, $C_p$ vanishes if and only if $$f=(\log|x|)^{-\frac{1}{p}}\varphi\left(\frac{x}{|x|}\right)$$ 
for some function $\varphi:\mathfrak{S}\rightarrow\mathbb{C}$, which makes the left-hand side of \eqref{rem. of crit. sob. on hom} infinite unless $\varphi=f=0$ on the basis of the non-integrability of the function
$$\frac{|f|^p}{|x|^Q}=\frac{\left|\varphi\left(\frac{x}{|x|}\right)\right|^p}{|\log|x|||x|^Q}$$
on $\mathbb{G}$.
    \end{thm}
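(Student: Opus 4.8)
The plan is to run the proof of Theorem~\ref{th2.1}(ii) almost line by line, under the dictionary ``Euler operator $x'\cdot\nabla_N$ $\leftrightarrow$ $|x|\,\mathcal{R}_{|x|}$'', ``$N\leftrightarrow Q$'', and ``Euclidean polar coordinates $\leftrightarrow$ the polar decomposition on $\mathbb{G}$ recorded in Section~2''. The first and only substantive step is the integration-by-parts identity playing the role of \eqref{proof of sob}, namely
\begin{equation*}
\int_{\mathbb{G}}\frac{|f(x)|^p}{|x|^Q}\,dx \;=\; -\,p\,{\rm Re}\int_{\mathbb{G}}\frac{\log|x|}{|x|^{Q-1}}\,|f(x)|^{p-2}\,\overline{f(x)}\,\mathcal{R}_{|x|}f(x)\,dx .
\end{equation*}
To prove it, I would pass to polar coordinates $x=ry$, $y\in\mathfrak{S}$, so that $\int_{\mathbb{G}}|f|^p|x|^{-Q}\,dx=\int_0^\infty\!\int_{\mathfrak{S}}\frac{|f(ry)|^p}{r}\,d\sigma(y)\,dr$; on each ray write $\frac1r=\frac{d}{dr}\log r$, integrate by parts in $r$ (the boundary terms vanish because $f\in C_0^\infty(\mathbb{G}\setminus\{0\})$), and then use $\frac{d}{dr}f(ry)=(\mathcal{R}_{|x|}f)(ry)$ together with $\frac{d}{dr}|f|^p=p|f|^{p-2}{\rm Re}(\overline{f}\,\tfrac{d}{dr}f)$ before converting back with the same polar formula. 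Equivalently, this is the divergence theorem for the radial operator on $\mathbb{G}$ applied with $|x|\,\mathcal{R}_{|x|}\log|x|=1$.

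Next I would plug into the nonnegative functional $C_p$ of \eqref{Cp}, taken from \cite[Lemma 3.3]{CKLL24}, the pair
\begin{equation*}
u=\frac{p\,\log|x|\,\mathcal{R}_{|x|}f}{|x|^{Q/p-1}},\qquad v=\frac{p(\log|x|)^{-1/p+1}}{|x|^{Q/p-1}}\,\mathcal{R}_{|x|}\!\left(f(\log|x|)^{1/p}\right),
\end{equation*}
and check, via the product rule and $\mathcal{R}_{|x|}\log|x|=|x|^{-1}$, that $u-v=-f|x|^{-Q/p}$. Integrating $C_p(u,v)$ over $\mathbb{G}$ and expanding exactly as in \eqref{Cp proof} produces $\int_{\mathbb{G}}|u|^p\,dx-\int_{\mathbb{G}}|f|^p|x|^{-Q}\,dx$ plus a cross term which, upon inserting the expression for $\overline u$, equals $p^2\,{\rm Re}\int_{\mathbb{G}}\frac{\log|x|}{|x|^{Q-1}}|f|^{p-2}\overline{f}\,\mathcal{R}_{|x|}f\,dx+p\int_{\mathbb{G}}|f|^p|x|^{-Q}\,dx$ and hence vanishes by the identity above. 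Rearranging yields \eqref{rem. of crit. sob. on hom}, and dropping the nonnegative $C_p$-term gives the companion inequality \eqref{sob.on.hom}.

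For the extremiser analysis I would invoke \cite[Step 3 of Proof of Lemma 3.4]{CKLL24}: for $p\ge 2$ one has $C_p(u,v)\ge c_p|v|^p$, so $C_p(u,v)=0$ iff $v=0$, i.e. $\mathcal{R}_{|x|}\!\left(f(\log|x|)^{1/p}\right)=0$. Integrating this along each ray gives $f(ry)(\log r)^{1/p}=\varphi(y)$ for some $\varphi:\mathfrak{S}\to\mathbb{C}$, that is $f=(\log|x|)^{-1/p}\varphi(x/|x|)$. Then $|f|^p|x|^{-Q}=|\varphi(x/|x|)|^p\bigl(|\log|x||\,|x|^Q\bigr)^{-1}$, and the polar formula turns $\int_{\mathbb{G}}|f|^p|x|^{-Q}\,dx$ into $\big(\int_{\mathfrak{S}}|\varphi|^p\,d\sigma\big)\int_0^\infty\frac{dr}{r|\log r|}$, which diverges (already because of the singularity at $r=1$) unless $\varphi\equiv 0$; hence the left-hand side of \eqref{rem. of crit. sob. on hom} is infinite unless $f=\varphi=0$.

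I expect the genuinely delicate point to be the integration-by-parts identity of the first step: justifying the radial integration by parts (or the divergence theorem for $\mathcal{R}_{|x|}$) for an arbitrary, not necessarily smooth, homogeneous quasi-norm, and the vanishing of the boundary contributions in the polar variable. Once that identity is secured, everything else is the same algebraic bookkeeping as in Theorem~\ref{th2.1}(ii), now simply read off in polar coordinates, so no further obstacle is anticipated.
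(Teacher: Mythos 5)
Your proposal is correct and follows essentially the same route as the paper: the radial integration by parts via the polar decomposition (giving the analogue of \eqref{proof of sob}), the substitution of the same pair $(u,v)$ into $C_p$ with $u-v=-f|x|^{-Q/p}$, and the extremiser analysis through the lower bound $C_p(u,v)\geq c_p|v|^p$ from \cite{CKLL24} followed by the non-integrability of $|\varphi|^p/(|\log|x|||x|^Q)$. The point you flag as delicate is handled in the paper exactly as you suggest: for each fixed $y\in\mathfrak{S}$ one performs one-dimensional integration by parts in $r$ with $\frac1r=\frac{d}{dr}\log r$, the boundary terms vanishing since $\operatorname{supp}f$ lies in a quasi-annulus away from the origin.
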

\begin{rem}
    By dropping the positive remainder term on the right-hand side of \eqref{rem. of crit. sob. on hom} we get the inequality from \cite[Theorem 3.4.]{RSY18b} 
    \begin{equation} \label{sob.on.hom}
        \left\|{\frac{f}{|x|^\frac{Q}{p} }}\right\|_{L^p(\mathbb{G})}
        \leq p \left\|\frac{\mathcal{R}_{|x|} f}{|x|^{\frac{Q}{p}-1}}\log|x|\right\|_{L^p(\mathbb{G})},
        \end{equation}
         where $|x|\mathcal{R}_{|x|} f=\mathbb{E} f$ is the Euler operator and $p$ is sharp. The last part of Theorem \ref{th.on hom.} implies that the sharp constant $p$ is not attained on $C^\infty_0(\mathbb{G}\backslash \{0\}).$
\end{rem}
    
    \begin{proof}[Proof of Theorem \ref{th.on hom.}]
        Using integration by parts, we have
\begin{align} 
    \int_{\mathbb{G}} \frac{|f(x)|^p}{|x|^Q}dx &=\int_0^\infty \int_\mathfrak{S} \frac{|f(ry)|^p}{r^{Q-Q+1}}d\sigma(y)dr \nonumber\\
    &=\int_0^\infty \int_\mathfrak{S}  |f(ry)|^p\left(\frac{d}{dr}{\rm \log} r\right) d\sigma(y)dr \nonumber\\ 
    &=-p{\rm Re}\int_0^\infty {\rm \log} r \int_\mathfrak{S}  |f(ry)|^{p-2} f(ry) \overline{\frac{df}{dr}} d\sigma(y)dr\nonumber\\
    &=-p{\rm Re}\int_\mathbb{G}\frac{|f(x)|^{p-2}f\overline{\mathcal{R}_{|x|} f(x)}}{|x|^{Q-1}}{\rm \log}|x|dx. \label{proof of sob. on hom}
\end{align} 
By using notations
$$u=\frac{p\log|x|\mathcal{R}_{|x|} f(x)}{|x|^{\frac{Q}{p}-1}}, \ \ v=\frac{p(\log|x|)^{-\frac{1}{p}+1}}{|x|^{\frac{Q}{p}-1}}\mathcal{R}_{|x|}\left(f(x)(\log|x|)^\frac{1}{p}\right)$$
in \eqref{Cp} we get that $u-v=-\frac{f(x)}{|x|^\frac{Q}{p}},$ and
\begin{multline} \label{Cp proof of hom}
\int_{\mathbb{G}}C_p\left(\frac{p\log|x|\mathcal{R}_{|x|} f(x)}{|x|^{\frac{Q}{p}-1}},\frac{p(\log|x|)^{-\frac{1}{p}+1}}{|x|^{\frac{Q}{p}-1}}\mathcal{R}_{|x|}\left(f(x)(\log|x|)^\frac{1}{p}\right)\right)dx\\=\int_{\mathbb{G}}\left|\frac{p\log|x|\mathcal{R}_{|x|} f(x)}{|x|^{\frac{Q}{p}-1}}\right|^pdx-\int_{\mathbb{G}}\left|-\frac{f(x)}{|x|^\frac{Q}{p}}\right|^pdx\\-p\int_{\mathbb{G}}\left|-\frac{f(x)}{|x|^\frac{Q}{p}}\right|^{p-2}{\rm{Re}}\left(-\frac{f(x)}{|x|^\frac{Q}{p}}\cdot\overline{\left(\frac{p\log|x|\mathcal{R}_{|x|} f(x)}{|x|^{\frac{Q}{p}-1}}+\frac{f(x)}{|x|^\frac{Q}{p}}\right)}\right)dx\\=\int_{\mathbb{G}}\left|\frac{p\log|x|\mathcal{R}_{|x|} f(x)}{|x|^{\frac{Q}{p}-1}}\right|^pdx-\int_{\mathbb{G}}\left|\frac{f(x)}{|x|^\frac{Q}{p}}\right|^pdx
\\+p\left(p{\rm{Re}}\int_{\mathbb{G}}\frac{|f(x)|^{p-2}}{|x|^{Q-1}}f(x)\cdot\overline{\log|x|\mathcal{R}_{|x|}f(x)}dx+\int_{\mathbb{G}}\frac{|f(x)|^p}{|x|^Q}dx\right).\end{multline}
Using \eqref{proof of sob. on hom} in \eqref{Cp proof of hom} implies \eqref{rem. of crit. sob. on hom}.

Now, to check the existence of nontrivial extremisers, we examine the sharp remainder term on the right-hand side of \eqref{rem. of crit. sob. on hom}.
Similar to the proof of Theorem \ref{th2.1}, we know that for $p\geq2$ $$C_p(u,v):=|u|^p-|u-v|^p-p|u-v|^{p-2}{\rm{Re}}((u-v)\cdot\overline{v})\geq c_p|v|^p,$$
where $c_p\in(0,1]$. Thus, we can easily see that $C_p(u,v)=0$ if and only if $v=0.$ Therefore, the remainder term vanishes if $f$ satisfies $$v=\frac{p(\log|x|)^{-\frac{1}{p}+1}}{|x|^{\frac{Q}{p}-1}}\mathcal{R}_{|x|}\left(f(x)(\log|x|)^\frac{1}{p}\right)=0.$$
By integrating 
$$\mathcal{R}_{|x|}\left(f(x)(\log|x|)^\frac{1}{p}\right)=0$$ we find that $f(x)=(\log|x|)^{-\frac{1}{p}}\varphi\left(\frac{x}{|x|}\right),$ 
where $\varphi:\mathfrak{S}\rightarrow\mathbb{C}$.
But, this makes the left-hand side of \eqref{rem. of crit. sob. on hom} infinite unless $f=\varphi=0$, which proves nonexistence of nontrivial extremisers. 
\end{proof} 
 In Theorem \ref{th.on hom.} by taking $p=Q$ we obtain a Hardy type identity and by dropping the positive remainder term we obtain the critical case of the Hardy type inequality by Badiale–Tarantello \cite{BT02} on homogeneous Lie groups.

\begin{rem} \label{critical badialle on hom}
    Let $\mathbb{G}$ be a homogeneous group of homogeneous dimension $Q$, and let $|\cdot|$ be any homogeneous quasi-norm on $\mathbb{G}$. Let $\mathcal{R}_{|x|}=\frac{d}{d|x|}$ be the radial derivative.  
    \begin{itemize}
        \item[(i)]
        For any complex-valued function $f \in C^\infty_0(\mathbb{G}\backslash\{{0}\})$, we have the following identity, which is the sharp remainder formula for (\ref{c.hardy from badialle on hom}):
\begin{multline} \label{rem. of crit. bad. on hom}
    \left\|\frac{f}{|x|}\right\|_{L^Q(\mathbb{G})}^Q=\left\|Q\log|x|\mathcal{R}_{|x|} f \right\|_{L^Q(\mathbb{G})}^Q-\\
    -\int_{\mathbb{G}}C_Q\left(Q \log|x|\mathcal{R}_{|x|} f,Q(\log|x|)^{-\frac{1}{Q}+1}\mathcal{R}_{|x|}\left(f(\log|x|)^\frac{1}{Q}\right)\right)dx,
\end{multline}
where the functional $C_Q$ is given by
\begin{equation*}\label{CQ}
C_Q(u,v):=|u|^Q-|u-v|^Q-Q|u-v|^{Q-2}{\rm{Re}}((u-v)\cdot\overline{v})\geq0.\end{equation*}
    \item[(ii)] For any complex-valued function $f \in C^\infty_0(\mathbb{G}\backslash\{{0}\})$ we have 
        \begin{equation} \label{c.hardy from badialle on hom}
        \left\|{\frac{f}{|x|}}\right\|_{L^Q(\mathbb{G})}
        \leq Q \left\|\log|x| \mathcal{R}_{|x|} f \right\|_{L^Q(\mathbb{G})},
        \end{equation}
        where $Q$ is sharp.
    \end{itemize}
\end{rem}

\section{$L^{2}$-Higher order identities}
Now, we investigate the $L^2$ higher-order version of the critical Sobolev type identity
\eqref{Cp rem. of crit. sob.}. 
For $p=2$, the identity \eqref{Cp rem. of crit. sob.} takes the form
\begin{multline} \label{rem. of crit. sob. L2}
    4\left\|\frac{\log|x'|(x'\cdot \nabla_N) f}{|x'|^\frac{N}{2}} \right\|_{L^2(\mathbb{R}^n)}^2-\left\|\frac{f}{|x'|^\frac{N}{2}}\right\|_{L^2(\mathbb{R}^n)}^2=\left\|\frac{f}{|x'|^\frac{N}{2}}+\frac{2\log|x'|(x'\cdot \nabla_N) f}{|x'|^\frac{N}{2}}\right\|_{L^2(\mathbb{R}^n)}^2,
\end{multline}
which by using $f=\log|x'|(x'\cdot\nabla_N)f$ in \eqref{rem. of crit. sob. L2} and iterating it $k-1$ times
gives
\begin{multline} \label{rem. of crit. sob. L2 higher order}
    4\left\|\frac{(\log|x'|(x'\cdot \nabla_N))^k f}{|x'|^\frac{N}{2}} \right\|_{L^2(\mathbb{R}^n)}^2-\left\|\frac{(\log|x'|(x'\cdot \nabla_N))^{k-1}f}{|x'|^\frac{N}{2}}\right\|_{L^2(\mathbb{R}^n)}^2\\=\left\|\frac{(\log|x'|(x'\cdot \nabla_N))^{k-1}f}{|x'|^\frac{N}{2}}+\frac{2(\log|x'|(x'\cdot \nabla_N))^k f}{|x'|^\frac{N}{2}}\right\|_{L^2(\mathbb{R}^n)}^2.
\end{multline}
By expanding the brackets inside the norms in \eqref{rem. of crit. sob. L2 higher order} and doing some calculations we can get the higher-order identity. Interestingly, we obtain this identity with the {\em Stirling numbers of the second
kind, Oblong numbers, and squares of the double factorial.}

Before stating the main result, let us introduce some notations:
\begin{equation} \label{sq.double}
a_k=(2k-1)!!^2,{\text{  
 }} k\in\mathbb{N},    
\end{equation}
is {\bf\em the square of the double factorial.}

For any non negative integers $k$ and $m$:
\begin{multline}
\label{coefficient}
O(k,m):=\left(\sum_{t=1}^{k-m}4^{k-t}a_t\sum_{t+1 \leq i_1<...<i_r\leq k-1} \prod_{j=1}^{r}o_{i_j}\right)+4^{m-1}a_{k-m+1},\ \ 1<m<k, 
\end{multline}
with $O(k,1)=a_k$ and $O(k,k)=4^{k-1}$, where $r=k-m-t+1$, and $o_{i_j}=(i_j)(i_j+1)$ is {\bf\em the Oblong number}. \\
We will also use the following {\bf\em Stirling number} of the second kind:
\begin{equation} \label{stirling}
S(m,\kappa)=\sum_{i=0}^{\kappa}\frac{(-1)^i}{\kappa!}\binom{\kappa}{i}(\kappa-i)^m,
\end{equation}
where $0\leq\kappa\leq m$, and $m$ is any non-negative integer.  
Recall that these numbers satisfy the recurrence relation  $$S(m+1,\kappa)=\kappa S(m,\kappa)+S(m,\kappa-1)$$ for $\kappa=1,2,...,m+1,$ $m=0,1,...,$ with initial conditions
$$S(0,0)=1, {\text{   }} S(m,0)=0 \text{ for } m>0,{\text{   }} S(m,\kappa)=0 \text{ for } \kappa>m.$$
For more details on such numbers we refer to \cite[Chapter 8]{CH02} and \cite[Section 2]{JM97}.\\

We can now state the theorem.
\begin{thm} \label{th4.1}
    Let $x=(x',x'') \in \mathbb{R}^N\times\mathbb{R}^{n-N}, 1\leq N\leq n,$ and $k\in \mathbb{N}.$  Then for any complex-valued $f \in C^\infty_0(\mathbb{R}^n\backslash\{{x'=0}\})$, we have 
\begin{multline} \label{higherorder}
    \frac{4^k}{a_k} \left\|\frac{(\log|x'|)^k(x'\cdot \nabla_N)^k f}{|x'|^\frac{N}{2}}\right\|^2_{L^2(\mathbb{R}^n)}=  \left\|\frac{f}{|x'|^\frac{N}{2}}\right\|^2_{L^2(\mathbb{R}^n)} \\+ \sum_{m=1}^{k}\frac{O(k,m)}{a_k}\Bigg\|\sum_{l=0}^{m-1}S(m-1,l)\frac{(\log|x'|)^l(x'\cdot \nabla_N)^l f}{|x'|^\frac{N}{2}} \\+2\sum_{\kappa=1}^{m}S(m,\kappa)\frac{(\log|x'|)^\kappa(x'\cdot \nabla_N)^\kappa f}{|x'|^\frac{N}{2}}\Bigg\|^2_{L^2(\mathbb{R}^n)}, 
\end{multline}
where $a_k=(2k-1)!!^2$, $O(k, m)>0$ and $S(m, \kappa)$ are defined in  (\ref{sq.double}), (\ref{coefficient}) and  (\ref{stirling}) respectively.
\end{thm}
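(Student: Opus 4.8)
The plan is to start from the iterated $L^2$-identity \eqref{rem. of crit. sob. L2 higher order} and convert it into a telescoping relation that, after summation over the iteration level, produces the claimed formula. For notational economy write $D:=\log|x'|(x'\cdot\nabla_N)$ and $u_j:=\dfrac{D^j f}{|x'|^{N/2}}$, so that \eqref{rem. of crit. sob. L2 higher order} reads
\begin{equation*}
4\|u_m\|^2 - \|u_{m-1}\|^2 = \|u_{m-1}+2u_m\|^2,\qquad m=1,\dots,k.
\end{equation*}
First I would divide the $m$-th identity by $4^{m}$ (or an appropriate power) and recognise the left-hand side as a difference $\tfrac14\bigl(\|u_{m-1}\|^2/4^{m-1}-\cdots\bigr)$; summing a suitably weighted version over $m=1,\dots,k$ telescopes the $\|u_j\|^2$ terms on the left and leaves only $\|u_0\|^2=\|f/|x'|^{N/2}\|^2$ and the top term $\|u_k\|^2$, with the $\|u_{m-1}+2u_m\|^2$ pieces accumulating on the right with coefficients that I claim equal $O(k,m)/a_k$. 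Matching these coefficients against the defining sum \eqref{coefficient} is the bookkeeping core of the argument: one checks by induction on $k$ (or by directly expanding the nested product over Oblong numbers) that the weights generated by the telescoping are exactly $O(k,m)$, using $O(k,1)=a_k$, $O(k,k)=4^{k-1}$, and the recursive structure $a_{t}$, $o_{i_j}=i_j(i_j+1)$ built into \eqref{coefficient}.

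The second, independent ingredient is the algebraic identity expressing $D^j$ in terms of the pure powers $(x'\cdot\nabla_N)^\ell$. Here I would invoke the one-dimensional Scherk identity \eqref{Eul.op.Stirling}, applied in the radial variable $t=\log|x'|$ after noting $x'\cdot\nabla_N = \tfrac{d}{dt}$ along rays and $\log|x'|\,(x'\cdot\nabla_N)=t\,\tfrac{d}{dt}$; thus $D^m=\bigl(t\tfrac{d}{dt}\bigr)^m=\sum_{\kappa=1}^m S(m,\kappa)\,t^\kappa\bigl(\tfrac{d}{dt}\bigr)^\kappa$, which in our notation is $D^m f/|x'|^{N/2}=\sum_{\kappa=1}^m S(m,\kappa)\,(\log|x'|)^\kappa (x'\cdot\nabla_N)^\kappa f/|x'|^{N/2}$. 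Substituting $u_{m-1}=D^{m-1}f/|x'|^{N/2}=\sum_{\ell=0}^{m-1}S(m-1,\ell)(\log|x'|)^\ell(x'\cdot\nabla_N)^\ell f/|x'|^{N/2}$ (with the convention $S(0,0)=1$ covering the $m=1$ term) and $2u_m=2\sum_{\kappa=1}^m S(m,\kappa)(\log|x'|)^\kappa(x'\cdot\nabla_N)^\kappa f/|x'|^{N/2}$ into the right-hand side of the telescoped identity yields precisely the norm appearing inside the sum in \eqref{higherorder}.

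Finally, dividing the whole telescoped identity through by $a_k=(2k-1)!!^2$ and isolating the top-order term $\tfrac{4^k}{a_k}\|u_k\|^2$ on the left gives \eqref{higherorder}; positivity $O(k,m)>0$ is immediate from \eqref{coefficient} since every summand there is a product of positive quantities. I expect the main obstacle to be the combinatorial identification of the telescoping weights with $O(k,m)$: one must carefully track how each application of \eqref{rem. of crit. sob. L2 higher order}, when iterated via $f=D f$, multiplies the accumulated coefficient of the level-$m$ remainder by the relevant Oblong factor $i(i+1)$ and power of $4$, and then verify that the resulting closed form coincides with the nested sum in \eqref{coefficient}; a clean induction on $k$, peeling off either the $m=1$ or the $m=k$ end, should make this manageable, but the index juggling between $t$, the $i_j$'s, and $r=k-m-t+1$ is delicate. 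The Stirling substitution, by contrast, is a routine consequence of \eqref{Eul.op.Stirling} once the change of variables to $t=\log|x'|$ is in place.
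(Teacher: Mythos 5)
Your second ingredient is fine: with $t=\log|x'|$ one indeed has $x'\cdot\nabla_N=\tfrac{d}{dt}$ along rays, so by \eqref{Eul.op.Stirling} the operator $D:=\log|x'|(x'\cdot\nabla_N)$ satisfies $D^m f=\sum_{\kappa=1}^m S(m,\kappa)(\log|x'|)^\kappa(x'\cdot\nabla_N)^\kappa f$, which is exactly the iterated form of Lemma \ref{lemma 1.}, and it correctly identifies $u_{m-1}+2u_m$ with the bracket inside the norms in \eqref{higherorder}. The gap is in the first ingredient. Telescoping the relations $4\|u_m\|^2-\|u_{m-1}\|^2=\|u_{m-1}+2u_m\|^2$ forces the weights: to cancel the intermediate $\|u_j\|^2$ you must take $c_{m+1}=4c_m$, so the only identity a telescope can produce is
\begin{equation*}
4^{k}\left\|\frac{\bigl(\log|x'|(x'\cdot\nabla_N)\bigr)^{k}f}{|x'|^{\frac{N}{2}}}\right\|^{2}_{L^2(\mathbb{R}^n)}
=\left\|\frac{f}{|x'|^{\frac{N}{2}}}\right\|^{2}_{L^2(\mathbb{R}^n)}
+\sum_{m=1}^{k}4^{m-1}\left\|u_{m-1}+2u_m\right\|^{2}_{L^2(\mathbb{R}^n)},
\end{equation*}
which is a genuinely different statement from \eqref{higherorder}: its left-hand side carries $\|D^k f\|^2$, not $\tfrac{1}{a_k}\|(\log|x'|)^k(x'\cdot\nabla_N)^k f\|^2$ (these differ for $k\ge 2$, since $D^k f$ contains lower-order Stirling terms), and its remainder weights are $4^{m-1}$, not $O(k,m)/a_k$ (already at $k=2$: $4$ versus $O(2,2)/a_2=4/9$). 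So the claim that ``the weights generated by the telescoping are exactly $O(k,m)$'' cannot be repaired by bookkeeping; no choice of weights in a pure telescope of \eqref{rem. of crit. sob. L2 higher order} produces the Oblong factors, because those relations contain only the constants $1,2,4$.

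What is missing is the analytic step that converts $\|D^k f\|^2$-type quantities into the separated powers: the integration-by-parts computation (the paper's \eqref{id.2}--\eqref{id.4}) showing
$2\,{\rm Re}\int_{\mathbb{R}^n}\frac{k(\log|x'|)^{2k+1}}{|x'|^{N}}(x'\cdot\nabla_N)^{k}f\,\overline{(x'\cdot\nabla_N)^{k+1}f}\,dx=-(2k^2+k)\bigl\|(\log|x'|)^k(x'\cdot\nabla_N)^k f/|x'|^{N/2}\bigr\|^2$,
equivalently
$\|(\log|x'|)^k(x'\cdot\nabla_N)^k(Df)\|_w^2=\|(\log|x'|)^{k+1}(x'\cdot\nabla_N)^{k+1}f\|_w^2-k(k+1)\|(\log|x'|)^k(x'\cdot\nabla_N)^k f\|_w^2$.
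This is where the Oblong numbers $k(k+1)$ and the normalization $a_{k+1}=(4k(k+1)+1)a_k=(2k+1)^2a_k$ enter; the cross inner products are not determined by the norms alone, so no purely algebraic manipulation of your telescoped identity can reach \eqref{higherorder}. The paper's route is an induction on $k$: substitute $f\mapsto Df$ in the $k$-th identity, use the Stirling recurrence (your step two) on the remainders, use the displayed integration-by-parts identity on the leading term, and then combine the accumulated coefficients through the recurrence $4k(k+1)O(k,m)+4O(k,m-1)=O(k+1,m)$ of Lemma \ref{lemma 2}. Your proposal would need to add both the integration-by-parts identity and this coefficient recurrence (or prove them) to close the argument; as written it proves a different, weaker-normalized identity.
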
 

\begin{rem}
When $k=1$, we obtain identity \eqref{rem. of crit. sob. L2} which is the $L^2$ stability result of the critical Sobolev type inequality \eqref{sob}.
\end{rem}

\begin{cor} \label{higher order inequality}
     By dropping the sum of $L^2$ norms on the right-hand side of \eqref{higherorder} and taking the square root of it, we obtain the following $L^2$ higher order version of the inequality \eqref{sob}:    Let $x=(x',x'') \in \mathbb{R}^N\times\mathbb{R}^{n-N}, 1\leq N\leq n,$ and $k\in \mathbb{N}.$  Then for any complex-valued $f \in C^\infty_0(\mathbb{R}^n\backslash\{{x'=0}\})$, we have 
\begin{equation} \label{higherorder inequality}
  \left\|\frac{f}{|x'|^\frac{N}{2}}\right\|_{L^2(\mathbb{R}^n)}\leq \frac{2^k}{a(k)} \left\|\frac{(\log|x'|)^k(x'\cdot \nabla_N)^k f}{|x'|^\frac{N}{2}}\right\|_{L^2(\mathbb{R}^n)},
\end{equation}
where $a(k)=(2k-1)!!$ is the double factorial of an odd number, and $\frac{2^k}{a(k)}$ is the sharp constant which is non-attainable. 
\end{cor}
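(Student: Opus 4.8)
The plan is to derive \eqref{higherorder inequality} directly from Theorem \ref{th4.1} by discarding the nonnegative sum on the right-hand side of \eqref{higherorder}. Since each $O(k,m)$ is positive (as asserted in the statement of Theorem \ref{th4.1}), each $L^2$-norm in the sum $\sum_{m=1}^k \frac{O(k,m)}{a_k}\|\cdots\|^2$ contributes a nonnegative term; dropping the whole sum yields the inequality
$$
\frac{4^k}{a_k}\left\|\frac{(\log|x'|)^k(x'\cdot\nabla_N)^k f}{|x'|^{\frac{N}{2}}}\right\|^2_{L^2(\mathbb{R}^n)}\geq \left\|\frac{f}{|x'|^{\frac{N}{2}}}\right\|^2_{L^2(\mathbb{R}^n)}.
$$
Taking square roots and using $a_k=(2k-1)!!^2=a(k)^2$, hence $\sqrt{4^k/a_k}=2^k/a(k)$, gives \eqref{higherorder inequality}. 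The only arithmetic to record is this identification of $\sqrt{a_k}$ with the double factorial $a(k)=(2k-1)!!$, which is immediate from \eqref{sq.double}.

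Next I would address sharpness of the constant $2^k/a(k)$. The natural test function is the iterate of the extremizer from the first-order case, namely $h_k(x)=(\log|x'|)^{-\frac{1}{2}}$ (the $p=2$ instance of the function $h_1$ appearing in the proof of Theorem \ref{th2.1}), or more precisely a suitable truncation of it so as to land in $C^\infty_0(\mathbb{R}^n\setminus\{x'=0\})$. The key computation is that the operator $\log|x'|(x'\cdot\nabla_N)$ acts on powers of $\log|x'|$ in a simple way: since $x'\cdot\nabla_N\log|x'|=1$, one has $\log|x'|(x'\cdot\nabla_N)(\log|x'|)^{\beta}=\beta(\log|x'|)^{\beta}$, so $(\log|x'|)^{-1/2}$ is an eigenfunction-like object with $\big(\log|x'|(x'\cdot\nabla_N)\big)^k h_k=(-\tfrac12)^k h_k$ up to the angular/$x''$ dependence, matching the equality case in \eqref{higherorder} where all the dropped terms vanish. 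To make this rigorous one takes a family $f_\varepsilon$ obtained by cutting off $h_k$ near $|x'|=0$, near $|x'|=\infty$ and near $|x'|=1$ (where $\log|x'|$ vanishes), and checks that the ratio of the two sides of \eqref{higherorder inequality} tends to $2^k/a(k)$ as the cutoffs are removed; this is the standard localization argument, and I would refer to the sharpness discussion already used for \eqref{sob}.

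For non-attainability, I would argue that if equality held for some $f\in C^\infty_0(\mathbb{R}^n\setminus\{x'=0\})$, then every term in the dropped sum in \eqref{higherorder} would have to vanish; in particular the $m=1$ term, which by the recurrence $S(0,0)=1$, $S(1,1)=1$ reads $\big\|\tfrac{f}{|x'|^{N/2}}+2\tfrac{\log|x'|(x'\cdot\nabla_N)f}{|x'|^{N/2}}\big\|^2_{L^2}=0$. This forces $f+2\log|x'|(x'\cdot\nabla_N)f=0$, i.e. $(x'\cdot\nabla_N)\big(f(\log|x'|)^{1/2}\big)=0$ after multiplying by $(\log|x'|)^{-1/2}/(2|x'|)$ and recognizing the product rule, exactly as in Part (ii) of Theorem \ref{th2.1}. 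Integrating along the radial direction in $x'$ gives $f=(\log|x'|)^{-1/2}\varphi(x'/|x'|,x'')$, which is incompatible with $f\in C^\infty_0$ unless $f\equiv0$, since such $f$ cannot be compactly supported and smooth across $|x'|=1$. I expect the main obstacle to be not the inequality itself (which is a one-line consequence of Theorem \ref{th4.1}) but the careful construction of the truncated test functions for sharpness, ensuring that all $k$ nested applications of $\log|x'|(x'\cdot\nabla_N)$ behave well under the cutoffs and that the error terms introduced by differentiating the cutoffs are negligible in the limit.
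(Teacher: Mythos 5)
Your first step is fine: dropping the nonnegative sum in \eqref{higherorder} (using $O(k,m)>0$) and taking square roots with $a_k=(2k-1)!!^2=a(k)^2$ gives \eqref{higherorder inequality}, and this is exactly the shortcut the paper itself records; the paper's primary route instead iterates the $p=2$ case of \eqref{sob} via the commutation identity \eqref{id.4}, but the two derivations are equivalent. Your non-attainability argument (equality forces every remainder term in \eqref{higherorder} to vanish, in particular the $m=1$ term, hence $f=(\log|x'|)^{-1/2}\varphi(x'/|x'|,x'')$, which makes the left-hand side infinite unless $f\equiv 0$) is also the paper's argument.

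The genuine gap is in the sharpness part. Your ``key computation'' is performed for the operator $\bigl(\log|x'|(x'\cdot\nabla_N)\bigr)^k$, for which $(\log|x'|)^{-1/2}$ is indeed an eigenfunction with eigenvalue $(-\tfrac12)^k$; but \eqref{higherorder inequality} involves the different operator $(\log|x'|)^k(x'\cdot\nabla_N)^k$, and the discrepancy between the two is precisely what the Stirling-number bookkeeping in \eqref{Eul.op.Stirling} and in \eqref{higherorder} tracks. It is also exactly what separates the sharp constant $2^k/(2k-1)!!$ from the naive iterated constant $2^k$. The correct computation, used in the paper, is
\[
(\log|x'|)^k(x'\cdot\nabla_N)^k(\log|x'|)^A=A(A-1)\cdots(A-k+1)\,(\log|x'|)^A ,
\]
so the Rayleigh quotient tends to $\bigl((2k-1)!!/2^k\bigr)^2$ as $A\nearrow-\tfrac12$; with your eigenvalue $(-\tfrac12)^k$ the ratio would come out as $2^k$, which does not match the claimed constant and would even appear to contradict the inequality. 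Moreover, asserting that ``the ratio tends to $2^k/a(k)$ as the cutoffs are removed'' by a standard localization is not a proof here: the paper takes the admissible family $g_A(x)=\phi(x'')\psi_\delta(x')(\log|x'|)^A$ with $A\nearrow -\tfrac12$ and support in $\{|x'|\ge 1+\delta\}$ (note $(\log|x'|)^A$ is not usable across $|x'|=1$, so one excludes $|x'|\le 1$ entirely rather than merely cutting off near $|x'|=0$), and the non-trivial step is showing that the contribution of the annulus $1+\delta\le|x'|\le 1+2\delta$ is $o(1)$ relative to the divergent denominator, which requires the expansion \eqref{Eul.op.Stirling} together with the Leibniz rule and the bounds on derivatives of $\psi_\delta$. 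So the inequality and non-attainability parts of your proposal stand, but the sharpness proof needs the corrected eigenvalue computation and an actual estimate of the cutoff errors.
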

Now, let us state the next two lemmata, which will be useful in the proof of Theorem \ref{th4.1}.

\begin{lemma} \label{lemma 1.} 
Let $x=(x',x'') \in \mathbb{R}^N\times\mathbb{R}^{n-N}, 1\leq N\leq n,$ and $\kappa\in \mathbb{N}.$  Then for any $f \in C^\infty_0(\mathbb{R}^n\backslash\{{x'=0}\})$, we have
\begin{multline} \label{lemma 1}
(\log|x'|)^\kappa(x'\cdot \nabla_N)^\kappa \big(\log|x'|(x'\cdot \nabla_N) f \big)
\\=\kappa (\log|x'|)^\kappa(x'\cdot \nabla_N)^\kappa f+(\log|x'|)^{\kappa+1}(x'\cdot \nabla_N)^{\kappa+1}f,
\end{multline}
where $\nabla_N$ is the standard gradient on $\mathbb{R}^N.$ 
\end{lemma}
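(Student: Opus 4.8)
The plan is to prove the identity by induction on $\kappa$, or—more cleanly—by a direct operator-commutator computation, since the key mechanism is understanding how the first-order operators $x'\cdot\nabla_N$ and multiplication by $\log|x'|$ interact. First I would record the two elementary facts already used in the proof of Theorem \ref{th2.1}: the Euler-type identity $x'\cdot\nabla_N\log|x'|=1$, and, as a consequence, the commutator relation
\begin{equation*}
(x'\cdot\nabla_N)\bigl(\log|x'|\,g\bigr)=\log|x'|\,(x'\cdot\nabla_N)g+g
\end{equation*}
for any smooth $g$. Writing $T:=\log|x'|\,(x'\cdot\nabla_N)$ for the operator appearing in the higher-order identities, this says precisely that $(x'\cdot\nabla_N)\log|x'|=\log|x'|\,(x'\cdot\nabla_N)+I$ as operators, i.e. $(x'\cdot\nabla_N)T=\bigl(T+\log|x'|\bigr)(x'\cdot\nabla_N)$, or equivalently the commutator $[x'\cdot\nabla_N,\log|x'|]=I$.

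Next I would iterate this commutator to move all $\kappa$ copies of $x'\cdot\nabla_N$ in $(\log|x'|)^\kappa(x'\cdot\nabla_N)^\kappa\bigl(\log|x'|(x'\cdot\nabla_N)f\bigr)$ past the single factor $\log|x'|$ sitting in the middle. Since $[x'\cdot\nabla_N,\log|x'|]=I$ commutes with $\log|x'|$, the standard operator identity gives
\begin{equation*}
(x'\cdot\nabla_N)^\kappa\,\log|x'|=\sum_{j=0}^{\kappa}\binom{\kappa}{j}\bigl[\underbrace{[\cdots[}_{j}\,x'\cdot\nabla_N,\log|x'|]\cdots]\bigr]\,(x'\cdot\nabla_N)^{\kappa-j}=\log|x'|\,(x'\cdot\nabla_N)^\kappa+\kappa\,(x'\cdot\nabla_N)^{\kappa-1},
\end{equation*}
because all iterated commutators of order $\geq 2$ vanish. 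Applying this to $(x'\cdot\nabla_N)f$ and then multiplying on the left by $(\log|x'|)^\kappa$ yields
\begin{equation*}
(\log|x'|)^\kappa(x'\cdot\nabla_N)^\kappa\bigl(\log|x'|(x'\cdot\nabla_N)f\bigr)=(\log|x'|)^{\kappa+1}(x'\cdot\nabla_N)^{\kappa+1}f+\kappa\,(\log|x'|)^\kappa(x'\cdot\nabla_N)^\kappa f,
\end{equation*}
which is exactly \eqref{lemma 1}.

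I do not anticipate a serious obstacle here; the only point requiring a little care is justifying the operator identity $(x'\cdot\nabla_N)^\kappa\log|x'|=\log|x'|(x'\cdot\nabla_N)^\kappa+\kappa(x'\cdot\nabla_N)^{\kappa-1}$, which is most safely done by a one-line induction on $\kappa$ using the base commutator $[x'\cdot\nabla_N,\log|x'|]=I$: assuming it for $\kappa$, apply $x'\cdot\nabla_N$ once more and use that $I$ commutes with everything. All manipulations take place on $C_0^\infty(\mathbb{R}^n\setminus\{x'=0\})$, where $\log|x'|$ and all derivatives are smooth, so no integrability or boundary issues arise. An alternative, equally short route is to verify \eqref{lemma 1} directly for $\kappa=1$ from $x'\cdot\nabla_N\log|x'|=1$ and then induct on $\kappa$, applying $\log|x'|(x'\cdot\nabla_N)$ to both sides and simplifying; I would include whichever version reads more transparently alongside the companion Lemma used in the proof of Theorem \ref{th4.1}.
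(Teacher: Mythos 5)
Your proposal is correct. The mechanism you isolate---the commutator relation $[x'\cdot\nabla_N,\log|x'|]=I$, equivalently $x'\cdot\nabla_N\log|x'|=1$---is indeed the engine of the identity, and your expansion in iterated commutators (or the one-line induction you offer as a fallback) correctly yields $(x'\cdot\nabla_N)^\kappa\,\log|x'| = \log|x'|\,(x'\cdot\nabla_N)^\kappa + \kappa\,(x'\cdot\nabla_N)^{\kappa-1}$, since all iterated commutators of order at least two vanish; applying this to $(x'\cdot\nabla_N)f$ and left-multiplying by $(\log|x'|)^\kappa$ gives \eqref{lemma 1}. The paper instead runs an induction directly on the full identity \eqref{lemma 1}: it checks $\kappa=1$ by hand from $x'\cdot\nabla_N\log|x'|=1$, then applies $\log|x'|(x'\cdot\nabla_N)$ to both sides of the $\kappa$-th order identity and uses the inductive hypothesis a second time to cancel the lower-order block---this is exactly your ``alternative, equally short route.'' The commutator formulation you favour is somewhat more structural: it separates the Heisenberg-type relation from the lemma, makes transparent why only the single correction term $\kappa(x'\cdot\nabla_N)^{\kappa-1}$ survives, and connects naturally to the Scherk--Stirling identity \eqref{Eul.op.Stirling} quoted in the introduction; the paper's version is more elementary in that it avoids invoking the general binomial expansion of $A^\kappa B$ in iterated commutators, at the cost of a slightly more intricate cancellation step. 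If you adopt the commutator route, include the short inductive proof of the operator identity (as you propose) rather than citing it as standard, so the argument stays self-contained.
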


In the next lemma, we establish the recurrence relation for $O(k,m)$ in (\ref{coefficient}), which will be used to combine the remainder terms arising from the iteration process.
\begin{lemma} \label{lemma 2}
Let $k,m \in\mathbb{N},$ $2\leq m\leq k$. Then, we have 
\begin{equation} \label{recurrence}
4k(k+1)O(k,m)+4O(k,m-1)=O(k+1,m).
\end{equation}
\end{lemma}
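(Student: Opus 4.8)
The plan is to recognise the inner double sum in the definition \eqref{coefficient} of $O(k,m)$ as an elementary symmetric polynomial in the Oblong numbers; once this is done, the recurrence \eqref{recurrence} reduces to the Pascal-type identity for elementary symmetric polynomials, weighted by the factors $4^{k+1-t}a_t$. Throughout, write $e_r(o_a,\dots,o_b)$ for the $r$-th elementary symmetric polynomial in the variables $o_a,o_{a+1},\dots,o_b$, with the conventions $e_0\equiv 1$ and $e_r\equiv 0$ whenever $r<0$ or $r$ exceeds the number of variables; then $\sum_{t+1\le i_1<\dots<i_r\le k-1}\prod_{j}o_{i_j}=e_r(o_{t+1},\dots,o_{k-1})$ by definition.

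The first step is to give $O(k,m)$ a single closed form valid for all $1\le m\le k$:
\[
O(k,m)=\sum_{t=1}^{k-m+1}4^{k-t}a_t\,e_{k-m-t+1}(o_{t+1},\dots,o_{k-1}).
\]
This absorbs the additive term in \eqref{coefficient}, since at $t=k-m+1$ the degree is $r=k-m-t+1=0$ and the summand is $4^{m-1}a_{k-m+1}e_0=4^{m-1}a_{k-m+1}$, while $t=1,\dots,k-m$ reproduce the stated sum. The boundary values are checked directly: for $m=1$ every term with $t<k$ has $e_{k-t}$ in only $k-t-1$ variables and hence vanishes, leaving $O(k,1)=a_k$; for $m=k$ only $t=1$ survives, giving $4^{k-1}a_1=4^{k-1}$ because $a_1=1$.

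With this formula in hand, and using $k(k+1)=o_k$, I would expand the three quantities appearing in \eqref{recurrence}:
\[
4k(k+1)O(k,m)=\sum_{t=1}^{k-m+1}4^{k+1-t}a_t\,o_k\,e_{k-m-t+1}(o_{t+1},\dots,o_{k-1}),
\]
\[
4\,O(k,m-1)=\sum_{t=1}^{k-m+2}4^{k+1-t}a_t\,e_{k-m-t+2}(o_{t+1},\dots,o_{k-1}),
\]
\[
O(k+1,m)=\sum_{t=1}^{k-m+2}4^{k+1-t}a_t\,e_{k-m-t+2}(o_{t+1},\dots,o_{k}).
\]
Now I compare the coefficients of $4^{k+1-t}a_t$ term by term. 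Setting $r=k-m-t+2$, the elementary symmetric recurrence for the variables $o_{t+1},\dots,o_{k-1},o_k$ is $e_r(o_{t+1},\dots,o_k)=e_r(o_{t+1},\dots,o_{k-1})+o_k\,e_{r-1}(o_{t+1},\dots,o_{k-1})$, with $r-1=k-m-t+1$. For $1\le t\le k-m+1$ this says exactly that the $t$-th summand of $4\,O(k,m-1)$ plus the $t$-th summand of $4k(k+1)O(k,m)$ equals the $t$-th summand of $O(k+1,m)$; for the extra index $t=k-m+2$ one has $r=0$, so $e_0=1$ on both sides and $e_{-1}=0$, and the identity still holds while $4k(k+1)O(k,m)$ simply contributes nothing at that index. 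Summing over $t$ yields \eqref{recurrence}.

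The only real obstacle is bookkeeping: lining up the two summation ranges ($k-m+1$ against $k-m+2$) and the shifted symmetric-polynomial degrees ($r$ against $r-1$), and verifying that the single extra index $t=k-m+2$ does not spoil the match because its Oblong-weighted contribution vanishes identically. There is no analytic content; after identifying the inner sums as elementary symmetric polynomials, \eqref{recurrence} is just their Pascal recurrence.
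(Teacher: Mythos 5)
Your proof is correct. It rests on the same combinatorial identity as the paper's argument: what you call the Pascal recurrence $e_r(o_{t+1},\dots,o_k)=e_r(o_{t+1},\dots,o_{k-1})+o_k\,e_{r-1}(o_{t+1},\dots,o_{k-1})$ is exactly the step in the paper where the two sums $\sigma_1$ and $\sigma_2$ are merged into a single sum of products of $r+1$ Oblong numbers over the enlarged index range. The genuine difference is organizational: because the paper works directly with the piecewise definition \eqref{coefficient}, in which $O(k,1)=a_k$, $O(k,k)=4^{k-1}$ and the term $4^{m-1}a_{k-m+1}$ sit outside the main sum, it must treat the cases $m=2$, $m=k$ and $2<m<k$ separately, each time realigning the summation ranges by hand. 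Your preliminary step of absorbing all of these into the single closed form $O(k,m)=\sum_{t=1}^{k-m+1}4^{k-t}a_t\,e_{k-m-t+1}(o_{t+1},\dots,o_{k-1})$ (with the standard conventions for $e_0$ and for degrees exceeding the number of variables, and with the boundary values $O(k,1)=a_k$, $O(k,k)=4^{k-1}$ checked as you do) lets you match the coefficient of $4^{k+1-t}a_t$ termwise for all $m$ at once, the only residual bookkeeping being the harmless extra index $t=k-m+2$. So the two proofs buy the same theorem from the same key identity, but your unified formulation eliminates the case analysis and makes transparent that the recurrence for $O(k,m)$ is nothing but the elementary-symmetric-function recurrence weighted by $4^{k-t}a_t$; the paper's version stays closer to the form of \eqref{coefficient} actually used in the induction of Theorem \ref{th4.1}, at the cost of three parallel computations.
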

The proofs of the Lemma \ref{lemma 1.} and Lemma \ref{lemma 2} are provided in the Appendix.
\begin{proof}[Proof of Theorem \ref{th4.1}.] 
We prove it by induction. 

$k=1.$ Equality (\ref{higherorder}) reduces to the identity (\ref{rem. of crit. sob. L2}), which has been proven in the previous section.

By assuming that identity (\ref{higherorder}) holds for $k^{th}$ order, let us prove it for $(k+1)^{th}$ order.

$(k+1)^{th}$ order. We have to prove that
\begin{multline} \label{higherorder k+1}
4^{k+1} \left\|\frac{(\log|x'|)^{k+1}(x'\cdot \nabla_N)^{k+1} f}{|x'|^\frac{N}{2}}\right\|^2_{L^2(\mathbb{R}^n)}= a_{k+1} \left\|\frac{f}{|x'|^\frac{N}{2}}\right\|^2_{L^2(\mathbb{R}^n)} \\+ \sum_{m=1}^{k+1}O({k+1},m)\Bigg\|\sum_{l=0}^{m-1}S(m-1,l)\frac{(\log|x'|)^l(x'\cdot \nabla_N)^l f}{|x'|^\frac{N}{2}} \\ \underbrace{+2\sum_{\kappa=1}^{m}S(m,\kappa)\frac{(\log|x'|)^\kappa(x'\cdot \nabla_N)^\kappa f}{|x'|^\frac{N}{2}}\Bigg\|^2_{L^2(\mathbb{R}^n)}}_{A^{m-1,m}}. 
\end{multline}
Replacing $f$ in (\ref{higherorder}) by $\log|x'|(x'\cdot \nabla_N) f$ gives
\begin{multline} \label{id.1}
4^k\left\|\frac{(\log|x'|)^k(x'\cdot \nabla_N)^k (\log|x'|(x'\cdot \nabla_N) f)}{|x'|^\frac{N}{2}}\right\|^2_{L^2(\mathbb{R}^n)}= a_k \left\|\frac{\log|x'|(x'\cdot \nabla_N) f}{|x'|^\frac{N}{2}}\right\|^2_{L^2(\mathbb{R}^n)}\\+ \sum_{m=1}^{k}O(k,m)\Bigg\|\sum_{l=0}^{m-1}S(m-1,l)\frac{(\log|x'|)^l(x'\cdot \nabla_N)^l( \log|x'|(x'\cdot \nabla_N) f)}{|x'|^\frac{N}{2}} \\  +2\sum_{\kappa=1}^{m}S(m,\kappa)\frac{(\log|x'|)^\kappa(x'\cdot \nabla_N)^\kappa (\log|x'|(x'\cdot \nabla_N) f)}{|x'|^\frac{N}{2}}\Bigg\|^2_{L^2(\mathbb{R}^n)}. 
\end{multline}

We demonstrate that identity (\ref{id.1}) implies identity (\ref{higherorder k+1}). We proceed to prove (\ref{higherorder k+1}) part by part. Firstly, we focus on establishing the second $L^2$ norm on the right-hand side of (\ref{higherorder k+1}), denoted as $A^{m-1,m}$. Since there are two similar terms within this norm, we will begin by handling the second term. The first term can be derived using the same method. 

The expression 
\begin{equation}    
\sum_{\kappa=1}^{m}S(m,\kappa)\frac{(\log|x'|)^\kappa(x'\cdot \nabla_N)^\kappa (\log|x'|(x'\cdot \nabla_N) f)}{|x'|^\frac{N}{2}} 
\end{equation}
in (\ref{id.1}) can be simplified by using Lemma \ref{lemma 1.}, as follows:

\begin{multline} \label{stirling reccurence}
\sum_{\kappa=1}^{m}S(m,\kappa)\frac{(\log|x'|)^\kappa(x'\cdot \nabla_N)^\kappa \big(\log|x'|(x'\cdot \nabla_N\big) f)}{|x'|^\frac{N}{2}}\\=\sum_{\kappa=1}^{m}S(m,\kappa)\left(\frac{\kappa (\log|x'|)^\kappa(x'\cdot \nabla_N)^\kappa f}{|x'|^\frac{N}{2}}+\frac{(\log|x'|)^{\kappa+1}(x'\cdot \nabla_N)^{\kappa+1}f}{|x'|^\frac{N}{2}}\right)\\=S(m,1)\left(\frac{ \log|x'|(x'\cdot \nabla_N) f}{|x'|^\frac{N}{2}}+\frac{(\log|x'|)^2(x'\cdot \nabla_N)^2 f}{|x'|^\frac{N}{2}}\right)\\+S(m,2)\left(\frac{ 2(\log|x'|)^2(x'\cdot \nabla_N)^2 f}{|x'|^\frac{N}{2}}+\frac{(\log|x'|)^3(x'\cdot \nabla_N)^3 f}{|x'|^\frac{N}{2}}\right)\\+S(m,3)\left(\frac{ 3(\log|x'|)^3(x'\cdot \nabla_N)^3 f}{|x'|^\frac{N}{2}}+\frac{(\log|x'|)^4(x'\cdot \nabla_N)^4 f}{|x'|^\frac{N}{2}}\right)\\...+S(m,m)\left(\frac{ m(\log|x')^m|(x'\cdot \nabla_N)^m f}{|x'|^\frac{N}{2}}+\frac{(\log|x'|)^{m+1}(x'\cdot \nabla_N)^{m+1} f}{|x'|^\frac{N}{2}}\right)
\end{multline}
 
\begin{multline}\label{stirling reccurence2}
=S(m,1)\frac{ \log|x'|(x'\cdot \nabla_N) f}{|x'|^\frac{N}{2}}+\bigg(S(m,1)+2S(m,2)\bigg)\frac{(\log|x'|)^2(x'\cdot \nabla_N)^2 f}{|x'|^\frac{N}{2}}\\+\bigg(S(m,2)+3S(m,3)\bigg)\frac{(\log|x'|)^3(x'\cdot \nabla_N)^3 f}{|x'|^\frac{N}{2}}\\+\bigg(S(m,3)+4S(m,4)\bigg)\frac{(\log|x'|)^4(x'\cdot \nabla_N)^4 f}{|x'|^\frac{N}{2}}\\...+\bigg(S(m,m-1)+mS(m,m)\bigg)\frac{(\log|x'|)^m(x'\cdot \nabla_N)^m f}{|x'|^\frac{N}{2}}\\+S(m,m)\frac{(\log|x'|)^{m+1}(x'\cdot \nabla_N)^{m+1} f}{|x'|^\frac{N}{2}}\\=\sum_{\kappa=1}^{m+1}\bigg(S(m,\kappa-1)+\kappa S(m,\kappa)\bigg)\frac{(\log|x'|)^\kappa(x'\cdot \nabla_N)^\kappa f}{|x'|^\frac{N}{2}}.
\end{multline}
Now by using the recurrence relation of Stirling numbers from \eqref{stirling} in \eqref{stirling reccurence2}
we get
\begin{multline} \label{rec4}
\sum_{\kappa=1}^{m}S(m,\kappa)\frac{(\log|x'|)^\kappa(x'\cdot \nabla_N)^\kappa \big(\log|x'|(x'\cdot \nabla_N\big) f)}{|x'|^\frac{N}{2}}\\=\sum_{\kappa=1}^{m+1}S(m+1,\kappa)\frac{(\log|x'|)^\kappa(x'\cdot \nabla_N)^\kappa f}{|x'|^\frac{N}{2}}.
\end{multline}
By following the same method, we can simplify the first term in the second $L^2$ norm on the right-hand side of (\ref{id.1}), as follows:
\begin{multline}
    \sum_{l=0}^{m-1}S(m-1,l)\frac{(\log|x'|)^l(x'\cdot \nabla_N)^l( \log|x'|(x'\cdot \nabla_N) f)}{|x'|^\frac{N}{2}}\\=\sum_{l=0}^{m-1}S(m-1,l)\left(\frac{l(\log|x'|)^l(x'\cdot \nabla_N)^l f}{|x'|^\frac{N}{2}}+\frac{(\log|x'|)^{l+1}(x'\cdot \nabla_N)^{l+1}f}{|x'|^\frac{N}{2}}\right) \\=\sum_{l=0}^{m}\bigg(l S(m-1,l)+S(m-1,l-1)\bigg)\frac{(\log|x'|)^l(x'\cdot \nabla_N)^l f}{|x'|^\frac{N}{2}}, 
\end{multline}
which implies
\begin{equation} \label{rec5}
    \sum_{l=0}^{m-1}S(m-1,l)\frac{(\log|x'|)^l(x'\cdot \nabla_N)^l( \log|x'|(x'\cdot \nabla_N) f)}{|x'|^\frac{N}{2}}=\sum_{l=0}^{m}S(m,l)\frac{(\log|x'|)^l(x'\cdot \nabla_N)^l f}{|x'|^\frac{N}{2}}.
\end{equation}
By using \eqref{rec4} and \eqref{rec5} in the second $L^2$ norm on right-hand side of (\ref{id.1}) we get
\begin{multline} \label{A.m.m+1}
\Bigg\|\sum_{l=0}^{m-1}S(m-1,l)\frac{(\log|x'|)^l(x'\cdot \nabla_N)^l( \log|x'|(x'\cdot \nabla_N) f)}{|x'|^\frac{N}{2}} \\  +2\sum_{\kappa=1}^{m}S(m,\kappa)\frac{(\log|x'|)^\kappa(x'\cdot \nabla_N)^\kappa (\log|x'|(x'\cdot \nabla_N) f)}{|x'|^\frac{N}{2}}\Bigg\|^2_{L^2(\mathbb{R}^n)}\\=\underbrace{\Bigg\|\sum_{l=0}^{m}S(m,l)\frac{(\log|x'|)^l(x'\cdot \nabla_N)^l f}{|x'|^\frac{N}{2}}  +2\sum_{\kappa=1}^{m+1}S(m+1,\kappa)\frac{(\log|x'|)^\kappa(x'\cdot \nabla_N)^\kappa f}{|x'|^\frac{N}{2}}\Bigg\|^2_{L^2(\mathbb{R}^n)}}_{A^{m,m+1}},    
\end{multline}
which we denote as $A^{m,m+1}$. Later in (\ref{id.11}), by shifting the index $m$, we will show how to derive $A^{m-1,m}$ which is from (\ref{higherorder k+1}).

Now let us show the left-hand side of (\ref{higherorder k+1}). The $L^2$ norm on the left-hand side of (\ref{id.1}) can be simplified by using Lemma \ref{lemma 1.} as follows:
\begin{multline} \label{id.2}
   \left\|\frac{(\log|x'|)^k(x'\cdot \nabla_N)^k (\log|x'|(x'\cdot \nabla_N) f)}{|x'|^\frac{N}{2}}\right\|^2_{L^2(\mathbb{R}^n)}\\=\left\|\frac{k(\log|x'|)^k(x'\cdot \nabla_N)^kf+ (\log|x'|)^{k+1}(x'\cdot \nabla_N)^{k+1} f)}{|x'|^\frac{N}{2}}\right\|^2_{L^2(\mathbb{R}^n)} \\=k^2\left\|\frac{(\log|x'|)^k(x'\cdot \nabla_N)^kf}{|x'|^\frac{N}{2}}\right\|^2_{L^2(\mathbb{R}^n)}+\left\|\frac{(\log|x'|)^{k+1}(x'\cdot \nabla_N)^{k+1}f}{|x'|^\frac{N}{2}}\right\|^2_{L^2(\mathbb{R}^n)}\\+2{\rm Re}\int_{\mathbb{R}^n}\frac{k(\log|x'|)^{2k+1}}{|x'|^N}(x'\cdot \nabla_N)^{k}f\overline{(x'\cdot\nabla_N)^{k+1}f}dx.
\end{multline}
To simplify the last term of the right-hand side of \eqref{id.2} we use integration by parts as follows:
\begin{multline*}
    2{\rm Re}\int_{\mathbb{R}^n} \frac{k(\log|x'|)^{2k+1}}{|x'|^N}(x'\cdot \nabla_N)^{k}f\overline{(x'\cdot\nabla_N)^{k+1}f}dx\\=2{\rm Re}\int_{\mathbb{R}^n} \left[\frac{k(\log|x'|)^{2k+1}}{|x'|^N}(x'\cdot \nabla_N)^{k}f\right](x'\cdot\nabla_N)\overline{(x'\cdot\nabla_N)^{k}f}dx\\=2{\rm Re}\int_{\mathbb{R}^n} \left[\frac{k(\log|x'|)^{2k+1}}{|x'|^N}(x'\cdot \nabla_N)^{k}f\right]\sum_{j=1}^{N}x'_j\partial_{x'_j}\overline{(x'\cdot\nabla_N)^{k}f}dx\\=-2{\rm Re}\int_{\mathbb{R}^n} \sum_{j=1}^{N}\Bigg[(2k^2+k)(\log|x'|)^{2k}\frac{\frac{2x'_j}{2|x'|}}{|x'|}|x'|^{-N}(x'\cdot \nabla_N)^{k}f x'_j\\+k(\log|x'|)^{2k+1}(-N)|x'|^{-N-1}\frac{2x'_j}{2|x'|}(x'\cdot \nabla_N)^{k}f x'_j+\frac{k(\log|x'|)^{2k+1}}{|x'|^N}(x'_j\partial_{x'_j})(x'\cdot \nabla_N)^{k}f\\+\frac{k(\log|x'|)^{2k+1}}{|x'|^N}(x'\cdot \nabla_N)^{k}f\Bigg]\overline{(x'\cdot\nabla_N)^{k}f}dx\\=-2{\rm Re}\int_{\mathbb{R}^n} \Bigg[\frac{(2k^2+k)(\log|x'|)^{2k}}{|x'|^N}(x'\cdot \nabla_N)^{k}f -N\frac{k(\log|x'|)^{2k+1}}{|x'|^N}(x'\cdot \nabla_N)^{k}f \\+\frac{k(\log|x'|)^{2k+1}}{|x'|^N}(x'\cdot \nabla_N)^{k+1}f+N\frac{k(\log|x'|)^{2k+1}}{|x'|^N}(x'\cdot \nabla_N)^{k}f\Bigg]\overline{(x'\cdot\nabla_N)^{k}f}dx\\=-2{\rm Re}\int_{\mathbb{R}^n} \frac{(2k^2+k)(\log|x'|)^{2k}}{|x'|^N}(x'\cdot \nabla_N)^{k}f \overline{(x'\cdot\nabla_N)^{k}f}dx \\-2{\rm Re}\int_{\mathbb{R}^n}\frac{k(\log|x'|)^{2k+1}}{|x'|^N}(x'\cdot \nabla_N)^{k+1}f\overline{(x'\cdot\nabla_N)^{k}f}dx
\end{multline*}
\begin{multline*}
=-2\int_{\mathbb{R}^n} \frac{(2k^2+k)(\log|x'|)^{2k}}{|x'|^N}|(x'\cdot\nabla_N)^{k}f|^2dx\\-2{\rm Re}\int_{\mathbb{R}^n} \frac{k(\log|x'|)^{2k+1}}{|x'|^N}(x'\cdot \nabla_N)^{k+1}f\overline{(x'\cdot\nabla_N)^{k}f}dx.
\end{multline*}
It means that
\begin{multline}
    \label{id.3}
    2{\rm Re}\int_{\mathbb{R}^n} \frac{k(\log|x'|)^{2k+1}}{|x'|^N}(x'\cdot \nabla_N)^{k}f\overline{(x'\cdot\nabla_N)^{k+1}f}dx\\=-(2k^2+k)\left\| \frac{(\log|x'|)^k(x'\cdot\nabla_N)^{k}f}{|x'|^\frac{N}{2}}\right\|^2_{L^2(\mathbb{R}^n)}.
\end{multline}
Combining (\ref{id.3}) with (\ref{id.2}) gives
\begin{multline} \label{id.4}
   \left\|\frac{(\log|x'|)^k(x'\cdot \nabla_N)^k (\log|x'|(x'\cdot \nabla_N) f)}{|x'|^\frac{N}{2}}\right\|^2_{L^2(\mathbb{R}^n)}=\\=\left\|\frac{(\log|x'|)^{k+1}(x'\cdot \nabla_N)^{k+1}f}{|x'|^\frac{N}{2}}\right\|^2_{L^2(\mathbb{R}^n)}-(k^2+k)\left\| \frac{(\log|x'|)^k(x'\cdot\nabla_N)^{k}f}{|x'|^\frac{N}{2}}\right\|^2_{L^2(\mathbb{R}^n)}.
\end{multline}
Now by combining (\ref{id.4}), (\ref{id.1}) and \eqref{A.m.m+1} we get 
\begin{multline} \label{id.5}
    4^k \left\|\frac{(\log|x'|)^{k+1}(x'\cdot \nabla_N)^{k+1}f}{|x'|^\frac{N}{2}}\right\|^2_{L^2(\mathbb{R}^n)}=k(k+1) 4^{k}\left\|\frac{\log^{k}|x'|(x'\cdot \nabla_N)^{k}f}{|x'|^\frac{N}{2}}\right\|^2_{L^2(\mathbb{R}^n)} \\+ a_k \left\|\frac{\log|x'|(x'\cdot \nabla_N) f}{|x'|^\frac{N}{2}}\right\|^2_{L^2(\mathbb{R}^n)}+ \sum_{m=1}^{k}O(k,m)A^{m,m+1}.
\end{multline}
Using the identities (\ref{higherorder}) and (\ref{rem. of crit. sob. L2}) on the right-hand side of (\ref{id.5}) we get
\begin{multline} \label{id.6} 
    4^k \left\|\frac{(\log|x'|)^{k+1}(x'\cdot \nabla_N)^{k+1}f}{|x'|^\frac{N}{2}}\right\|^2_{L^2(\mathbb{R}^n)}\\=k(k+1) a_k \left\|\frac{f}{|x'|^\frac{N}{2}}\right\|^2_{L^2(\mathbb{R}^n)} +k(k+1) \sum_{m=1}^{k}O(k,m)A^{m-1,m} \\ + \frac{1}{4}a_k \left\|\frac{f}{|x'|^\frac{N}{2}}\right\|^2_{L^2(\mathbb{R}^n)} +\frac{1}{4}a_k A^{0,1}+ \sum_{m=1}^{k}O(k,m)A^{m,m+1}. 
\end{multline}
By multiplying both sides of (\ref{id.6}) by 4, then using $$4k(k+1) \sum_{m=1}^{k}O(k,m)A^{m-1,m}=4k(k+1)O(k,1)A^{0,1}+4k(k+1) \sum_{m=2}^{k}O(k,m)A^{m-1,m}$$
and $O(k,1)=a_{k},$ from (\ref{coefficient}) we get
\begin{multline} \label{id.7}
    4^{k+1} \left\|\frac{(\log|x'|)^{k+1}(x'\cdot \nabla_N)^{k+1}f}{|x'|^\frac{N}{2}}\right\|^2_{L^2(\mathbb{R}^n)}=\big(4k(k+1)+1\big) a_k \left\|\frac{f}{|x'|^\frac{N}{2}}\right\|^2_{L^2(\mathbb{R}^n)} \\ +\big(4k(k+1)+1\big)a_kA^{0,1}+4k(k+1) \sum_{m=2}^{k}O(k,m)A^{m-1,m}+ 4\sum_{m=1}^{k}O(k,m)A^{m,m+1}\\=a_{k+1} \left\|\frac{f}{|x'|^\frac{N}{2}}\right\|^2_{L^2(\mathbb{R}^n)}+O(k+1,1)A^{0,1}\\+ \sum_{m=1}^{k-1}\big[4k(k+1)O(k,m+1)+4O(k,m)\big]A^{m,m+1}+4O(k,k)A^{k,k+1},
\end{multline} where in the last line we have used $(4k(k+1)+1)a_k=a_{k+1}$ since (\ref{sq.double}) and $a_{k+1}=O(k+1,1)$ from (\ref{coefficient}).

Rewriting the sum in the last line of  (\ref{id.7}) by replacing $m$ with $m-1$ and $4O(k,k)$ with $O(k+1,k+1)$, since $O(k,k)=4^{k-1}$ in \eqref{coefficient}, then using Lemma \ref{lemma 2}, we get 
\begin{multline} \label{id.11}
4^{k+1} \left\|\frac{(\log|x'|)^{k+1}(x'\cdot \nabla_N)^{k+1}f}{|x'|^\frac{N}{2}}\right\|^2_{L^2(\mathbb{R}^n)}=a_{k+1} \left\|\frac{f}{|x'|^\frac{N}{2}}\right\|^2_{L^2(\mathbb{R}^n)}+O(k+1,1)A^{0,1}\\+\sum_{m=2}^{k}\big[4k(k+1)O(k,m)+4O(k,m-1)\big] A^{m-1,m}+O(k+1,k+1)A^{k,k+1}
    \\=a_{k+1}  \left\|\frac{f}{|x'|^\frac{N}{2}}\right\|^2_{L^2(\mathbb{R}^n)}+O(k+1,1)A^{0,1}+\sum_{m=2}^{k}O(k+1,m) A^{m-1,m}+O(k+1,k+1)A^{k,k+1}\\=a_{k+1}  \left\|\frac{f}{|x'|^\frac{N}{2}}\right\|^2_{L^2(\mathbb{R}^n)}+\sum_{m=1}^{k+1}O(k+1,m) A^{m-1,m}.
\end{multline}
Thus, we have checked the $(k+1)^{th}$ order of \eqref{higherorder} using the $k^{th}$ order. 
Then, Theorem \ref{th4.1} is true for all $k$.
\end{proof}

\begin{proof}[Proof of Corollary \ref{higher order inequality}] 
To derive the higher‐order inequality in \eqref{higherorder inequality}, we start by applying the substitution 
\[
f = \log|x'|(x'\cdot\nabla_N)f
\]
into the Sobolev type inequality
\begin{equation}\label{1st}
  \frac{1}{2^{2}}\left\|\frac{f}{|x'|^{\frac{N}{2}}}\right\|_{L^2(\mathbb{R}^n)}^2 \leq \left\|\frac{\log|x'|(x'\cdot \nabla_N) f}{|x'|^{\frac{N}{2}}}\right\|_{L^2(\mathbb{R}^n)}^2,
\end{equation}
which is given in \eqref{sob}. This substitution leads to
\[
\frac{1}{2^{2}}\left\|\frac{\log|x'|(x'\cdot\nabla_N)f}{|x'|^{\frac{N}{2}}}\right\|_{L^2(\mathbb{R}^n)}^2 \leq \left\|\frac{\log|x'|(x'\cdot \nabla_N)\log|x'|(x'\cdot\nabla_N)f}{|x'|^{\frac{N}{2}}}\right\|_{L^2(\mathbb{R}^n)}^2.
\]
Using the identity \eqref{id.4} for \(k=1\),
\begin{multline}\label{kth}
   \left\|\frac{(\log|x'|)^k (x'\cdot \nabla_N)^k \Bigl(\log|x'|(x'\cdot \nabla_N) f\Bigr)}{|x'|^{\frac{N}{2}}}\right\|_{L^2(\mathbb{R}^n)}^2\\
   = \left\|\frac{(\log|x'|)^{k+1}(x'\cdot \nabla_N)^{k+1}f}{|x'|^{\frac{N}{2}}}\right\|_{L^2(\mathbb{R}^n)}^2 - (k^2+k)\left\|\frac{(\log|x'|)^k (x'\cdot\nabla_N)^{k}f}{|x'|^{\frac{N}{2}}}\right\|_{L^2(\mathbb{R}^n)}^2,
\end{multline}
we obtain, after simplification, the second-order inequality
\begin{equation}\label{2nd}
  \frac{3^2}{2^{4}}\left\|\frac{f}{|x'|^{\frac{N}{2}}}\right\|_{L^2(\mathbb{R}^n)}^2 \leq \left\|\frac{(\log|x'|)^2 (x'\cdot\nabla_N)^2 f}{|x'|^{\frac{N}{2}}}\right\|_{L^2(\mathbb{R}^n)}^2.
\end{equation}
Next, by applying the substitution \(f = \log|x'|(x'\cdot\nabla_N)f\) in \eqref{2nd} and employing \eqref{kth} together with \eqref{2nd} and \eqref{1st}, we obtain the third-order version:
\begin{equation}\label{3rd}
  \frac{3^2 5^2}{2^6}\left\|\frac{f}{|x'|^{\frac{N}{2}}}\right\|_{L^2(\mathbb{R}^n)}^2 \leq \left\|\frac{(\log|x'|)^3 (x'\cdot\nabla_N)^3 f}{|x'|^{\frac{N}{2}}}\right\|_{L^2(\mathbb{R}^n)}^2.
\end{equation}
By repeating this iterative procedure, we arrive at
\begin{equation}\label{higherorder inequality 0}
  \frac{(2k-1)!!^2}{2^{2k}}\left\|\frac{f}{|x'|^{\frac{N}{2}}}\right\|_{L^2(\mathbb{R}^n)}^2 \leq \left\|\frac{(\log|x'|)^k (x'\cdot \nabla_N)^k f}{|x'|^{\frac{N}{2}}}\right\|_{L^2(\mathbb{R}^n)}^2,
\end{equation}
which implies the higher-order inequality stated in \eqref{higherorder inequality}. Note that this inequality can also be derived by dropping the positive remainder terms on the right-hand side of \eqref{higherorder}.

To show the sharpness of the constant $\frac{(2k-1)!!^2}{2^{2k}}$ in \eqref{higherorder inequality 0}, we consider the test function 
$$g_A(x)=\phi(x'')\psi_\delta(x')(\log|x'|)^A,$$ 
for $A<-\frac{1}{2}$ and small $\delta>0$, where $\phi(x'')\in C_0^\infty(\mathbb{R}^{n-N})$ is positive function. Here, as in \cite[Proof of Theorem 3.1, page 665]{San22}, let us choose $\psi_\delta$ to be smooth, radially symmetric function with the properties
\begin{equation}
    \psi_\delta(x')\equiv\begin{cases}0, \       \ 0\leq|x'|\leq1+\delta \\ 0\leq\psi_\delta\leq1, \ \ 1+\delta\leq|x'|\leq1+2\delta \\ 
    1, \ \ |x'|\geq1+2\delta,
    \end{cases}
\end{equation}
and $\left|\big(\frac{d}{d|x'|}\big)^{k}\psi_\delta(x')\right|\leq \frac{C_k}{\delta^k}$ for each $k$.

We will also use $\tilde{B}_R:=\{x'\in\mathbb{R}^N : |x'|\leq R\}$.
Taking into account $$|(x'\cdot\nabla_N)^k(\log|x'|)^A|=\prod_{j=0}^k|A-j|(\log|x'|)^{A-k},$$
we have 
\begin{align*}
\label{higherorder inequality 1}
  \frac{(2k-1)!!^2}{2^{2k}}&\leq\frac{\int_{\mathbb{R}^n}\frac{|(\log|x'|)^k(x'\cdot \nabla_N)^k g_A|^2}{|x'|^N}dx}{\int_{\mathbb{R}^n}\frac{|g_A|^2}{|x'|^N}dx}\\&\leq  \frac{\int_{\mathbb{R}^{n-N}}\int_{\mathbb{R}^N\backslash \tilde{B}_{1+2\delta}}\frac{(\log|x'|)^{2k}|(x'\cdot \nabla_N)^k g_A|^2}{|x'|^N}dx'dx''}{\int_{\mathbb{R}^{n-N}}\int_{\mathbb{R}^N\backslash \tilde{B}_{1+2\delta}}\frac{|g_A|^2}{|x'|^N}dx'dx''}\\&+ \frac{\int_{\mathbb{R}^{n-N}}\int_{\tilde{B}_{1+2\delta}\backslash \tilde{B}_{1+\delta}}\frac{(\log|x'|)^{2k}|(x'\cdot \nabla_N)^k g_A|^2}{|x'|^N}dx'dx''}{\int_{\mathbb{R}^{n-N}}\int_{\mathbb{R}^N\backslash \tilde{B}_{1+2\delta}}\frac{|g_A|^2}{|x'|^N}dx'dx''}\\&= \frac{\int_{\mathbb{R}^{n-N}}\int_{\mathbb{R}^N\backslash \tilde{B}_{1+2\delta}}\frac{|\phi(x'')|^2|(\log|x'|)^k(x'\cdot \nabla_N)^k (\log|x'|)^A|^2}{|x'|^N}dx'dx''}{\int_{\mathbb{R}^{n-N}}\int_{\mathbb{R}^N\backslash \tilde{B}_{1+2\delta}}\frac{|\phi(x'')|^2|(\log|x'|)^A|^2}{|x'|^N}dx'dx''}\\&+\frac{\int_{\mathbb{R}^{n-N}}\int_{\tilde{B}_{1+2\delta}\backslash \tilde{B}_{1+\delta}}\frac{|\phi(x'')|^2|(x'\cdot \nabla_N)^k (\psi_\delta(x')(\log|x'|)^A)|^2}{|x'|^N|(\log|x'|)^{-2k}}dx'dx''}{\int_{\mathbb{R}^{n-N}}\int_{\mathbb{R}^N\backslash \tilde{B}_{1+2\delta}}\frac{|\phi(x'')|^2|(\log|x'|)^A|^2}{|x'|^N}dx'dx''}\\&\leq  \frac{\prod_{j=0}^{k-1}(A-j)^2\int_{\mathbb{R}^{n-N}}\int_{\mathbb{R}^N\backslash \tilde{B}_{1+2\delta}}\frac{|\phi(x'')|^2|(\log|x'|)^k(\log|x'|)^{A-k}|^2}{|x'|^N}dx'dx''}{\int_{\mathbb{R}^{n-N}}\int_{\mathbb{R}^N\backslash \tilde{B}_{1+2\delta}}\frac{|\phi(x'')|^2|(\log|x'|)^A|^2}{|x'|^N}dx'dx''}\\&+\frac{\int_{\mathbb{R}^{n-N}}|\phi(x'')|^2\int_{\tilde{B}_{1+2\delta}\backslash \tilde{B}_{1+\delta}}\frac{|(|x'|\frac{d}{d|x'|})^k (\psi_\delta(x')(\log|x'|)^A)|^2}{|x'|^N|(\log|x'|)^{-2k}}dx'dx''}{\int_{\mathbb{R}^{n-N}}\int_{\mathbb{R}^N\backslash \tilde{B}_{1+2\delta}}\frac{|\phi(x'')|^2|(\log|x'|)^A|^2}{|x'|^N}dx'dx''}\\&\leq  \frac{(2k-1)!!^2}{2^{2k}}+o(1), \ \ \left(A\nearrow -\frac{1}{2}\right)
\end{align*}
where we have used 
$$\frac{\int_{\mathbb{R}^{n-N}}|\phi(x'')|^2\int_{\tilde{B}_{1+2\delta}\backslash \tilde{B}_{1+\delta}}\frac{|(|x'|\frac{d}{d|x'|})^k (\psi_\delta(x')(\log|x'|)^A)|^2}{|x'|^N(\log|x'|)^{-2k}}dx'dx''}{\int_{\mathbb{R}^{n-N}}\int_{\mathbb{R}^N\backslash \tilde{B}_{1+2\delta}}\frac{|\phi(x'')|^2|(\log|x'|)^A|^2}{|x'|^N}dx'dx''}
=o(1),$$
since the integral in the denominator diverges as $A \nearrow -\frac{1}{2},$ while the numerator converges because the integrand is bounded on the annulus 
\(\tilde{B}_{1+2\delta}\setminus \tilde{B}_{1+\delta}\) due to the estimate
\begin{multline*}
    \left|\left(|x'|\frac{d}{d|x'|}\right)^k\left(\psi_\delta(x')(\log|x'|)^A\right)\right|\leq\sum_{i=1}^{k}S(k,i)|x'|^i
\left|\left(\frac{d}{d|x'|}\right)^i\left( \psi_\delta(x')(\log|x'|)^A\right)\right|\\=\sum_{i=1}^{k}S(k,i)|x'|^i
\sum_{j=0}^{i}\binom{i}{j}\left|\left(\frac{d}{d|x'|}\right)^{i-j}\psi_\delta(x')\right|\left(\frac{d}{d|x'|}\right)^{j}(\log|x'|)^A\\\leq\sum_{i=1}^{k}S(k,i)|x'|^i
\sum_{j=0}^{i}\binom{i}{j}\frac{C_{i-j}}{\delta^{i-j}}\prod_{l}^{j-1}(A-l)(\log|x'|)^{A-j}.
\end{multline*} Here, we use formula \eqref{Eul.op.Stirling} for expansion of $\left(|x'|\frac{d}{d|x'|}\right)^k$, general Leibniz rule, and then $\left|\big(\frac{d}{d|x'|}\big)^{k}\psi_\delta(x')\right|\leq \frac{C_k}{\delta^k}$. Thus, the sharpness of the constant in \eqref{higherorder inequality} is established. In fact, the sharp constant would be attained if and only if all the nonnegative remainder terms in the identity \eqref{higherorder} vanish. However, as shown in Theorem \ref{th2.1}, the remainder term for \(m=1\) in \eqref{higherorder} vanishes if and only if 
\[
f = (\log|x'|)^{-\frac{1}{2}}\varphi\Bigl(\frac{x'}{|x'|},x''\Bigr).
\]
This condition forces the left-hand side of \eqref{higherorder inequality} to become infinite unless \(\varphi = f = 0\). Consequently, the sharp constant in \eqref{higherorder inequality} is not attained.
\end{proof}

We can derive higher-order identities and inequalities for stratified and homogeneous Lie groups by applying the same method. Let us present these results in the following theorems and corollaries. 
\begin{thm} \label{higher order on strtfd}
    Let $\mathbb{G}$ be a stratified group with N being the dimension of the first stratum and $k\in \mathbb{N}.$  Then for any $f \in C^\infty_0(\mathbb{G}\backslash\{{x'=0}\})$, we have 
\begin{multline*}
    \frac{4^k}{a_k} \left\|\frac{(\log|x'|)^k(x'\cdot \nabla_H)^k f}{|x'|^\frac{N}{2}}\right\|^2_{L^2(\mathbb{G})}=  \left\|\frac{f}{|x'|^\frac{N}{2}}\right\|^2_{L^2(\mathbb{G})} \\+ \sum_{m=1}^{k}\frac{O(k,m)}{a_k}\Bigg\|\sum_{l=0}^{m-1}S(m-1,l)\frac{(\log|x'|)^l(x'\cdot \nabla_H)^l f}{|x'|^\frac{N}{2}} \\ +2\sum_{\kappa=1}^{m}S(m,\kappa)\frac{(\log|x'|)^\kappa(x'\cdot \nabla_H)^\kappa f}{|x'|^\frac{N}{2}}\Bigg\|^2_{L^2(\mathbb{G})},
\end{multline*}
where $a_k$ is in (\ref{sq.double}), $O(k, m)$ is in (\ref{coefficient}), and $S(m, \kappa)$ and $S(m-1,l)$ are in (\ref{stirling}).
\end{thm}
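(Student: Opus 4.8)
The plan is to run the induction from the proof of Theorem~\ref{th4.1} essentially verbatim, with $\mathbb{R}^n$ replaced by $\mathbb{G}$, the Euclidean gradient $\nabla_N$ replaced by the horizontal gradient $\nabla_H$, and the Euler-type operator $x'\cdot\nabla_N=\sum_{j=1}^{N}x'_j\partial_{x'_j}$ replaced by $x'\cdot\nabla_H=\sum_{j=1}^{N}x'_j X_j$; the only genuine task is to verify that every analytic ingredient of that proof survives on the stratified group. Three structural facts suffice. First, by \eqref{inv.vec.fields} each $X_j$ differs from $\partial_{x'_j}$ only by terms differentiating in the strata $x^{(l)}$ with $l\ge 2$, so $X_j$ acts on any function depending only on $x'$ exactly as $\partial_{x'_j}$; in particular $x'\cdot\nabla_H\log|x'|=1$, $X_j x'_j=1$, and $X_j$ sends the weights $(\log|x'|)^s$ and $|x'|^{-N}$ to precisely the expressions appearing in the Euclidean computation. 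Second, each $X_j$ is a derivation, so $x'\cdot\nabla_H$ obeys the Leibniz rule. Third, the $X_j$ satisfy the divergence theorem on $\mathbb{G}$ (recalled after \eqref{inv.vec.fields}), hence $\int_{\mathbb{G}}(X_j u)v\,dx=-\int_{\mathbb{G}}u(X_j v)\,dx$ for $u,v\in C_0^\infty(\mathbb{G})$, the coefficients of $X_j$ being real.

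With these in hand I would first reprove the analogue of Lemma~\ref{lemma 1.}: writing $D=x'\cdot\nabla_H$, the Leibniz rule together with $D\log|x'|=1$ gives by induction on $\kappa$ that $D^{\kappa}(\log|x'|\,Df)=\kappa D^{\kappa}f+\log|x'|\,D^{\kappa+1}f$, hence $(\log|x'|)^{\kappa}D^{\kappa}(\log|x'|\,Df)=\kappa(\log|x'|)^{\kappa}D^{\kappa}f+(\log|x'|)^{\kappa+1}D^{\kappa+1}f$, which is exactly \eqref{lemma 1} with $\nabla_N$ replaced by $\nabla_H$. Next I would reprove the integration-by-parts identity \eqref{id.3} by writing $(x'\cdot\nabla_H)^{k+1}f=\sum_j x'_j X_j\big((x'\cdot\nabla_H)^{k}f\big)$ and integrating each $X_j$ by parts against the weight $k(\log|x'|)^{2k+1}|x'|^{-N}x'_j\,\overline{(x'\cdot\nabla_H)^{k}f}$: since all three weights $(\log|x'|)^{2k+1}$, $|x'|^{-N}$, $x'_j$ depend only on $x'$, the derivatives falling on them reproduce exactly the coefficients $2k^2+k$, $-Nk$, $+Nk$ (the last two cancelling) of the Euclidean case, while the derivative hitting $\overline{(x'\cdot\nabla_H)^{k}f}$ closes the integration by parts; this gives \eqref{id.3}, hence \eqref{id.4}, on $\mathbb{G}$ with the same constant $k^2+k$. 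The remaining steps of the proof of Theorem~\ref{th4.1}, namely the Stirling-number manipulations of \eqref{stirling reccurence}--\eqref{rec5}, the recurrence of Lemma~\ref{lemma 2} for $O(k,m)$, and the arithmetic identity $\big(4k(k+1)+1\big)a_k=a_{k+1}$, involve only the scalar coefficients $a_k$, $O(k,m)$, $S(m,\kappa)$ of \eqref{sq.double}, \eqref{coefficient}, \eqref{stirling} and carry over unchanged.

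It then remains to run the induction exactly as before: the base case $k=1$ is the $p=2$ specialisation of the identity \eqref{rem. of crit. sob. on strtfd} in Theorem~\ref{th6.1}; for the inductive step one substitutes $f\mapsto\log|x'|(x'\cdot\nabla_H)f$ into the order-$k$ identity, uses the stratified versions of Lemma~\ref{lemma 1.} and \eqref{id.4} to rewrite the left-hand norm, uses the Stirling recurrences to pass each remainder norm $A^{m-1,m}$ to $A^{m,m+1}$, and then applies Lemma~\ref{lemma 2} to recombine the coefficients into $O(k+1,m)$, producing the order-$(k+1)$ identity. The main, and essentially the only, obstacle is the integration-by-parts step: this is where the non-commutativity of $\mathbb{G}$ could a priori enter, and it does not precisely because every weight occurring there is a function of the first-stratum variable $x'$ alone, so the extra terms of $X_j$ in \eqref{inv.vec.fields} never act on a weight and are carried harmlessly inside the iterated operator $(x'\cdot\nabla_H)^{k}f$, which is treated throughout as a single unanalysed quantity.
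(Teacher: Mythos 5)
Your proposal is correct and coincides with the paper's intended argument: the paper omits the proof, stating only that the Euclidean method of Theorem \ref{th4.1} carries over, and your three structural observations (that the $X_j$ of \eqref{inv.vec.fields} act as $\partial_{x'_j}$ on weights depending only on $x'$, the derivation property, and integration by parts via the divergence theorem) are precisely the facts that justify this transfer, including the key integration-by-parts step \eqref{id.3} and the unchanged combinatorial lemmas. Using the $p=2$ case of \eqref{rem. of crit. sob. on strtfd} as the base case is exactly what the paper intends.
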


\begin{cor} \label{higher order inequality on strtfd}
     Let $\mathbb{G}$ be a stratified group with N being the dimension of the first stratum and $k\in \mathbb{N}.$  Then for any $f \in C^\infty_0(\mathbb{G}\backslash\{{x'=0}\})$, we have 
\begin{equation} \label{high.or ineq. on strtfd}
  \left\|\frac{f}{|x'|^\frac{N}{2}}\right\|_{L^2(\mathbb{G})}\leq \frac{2^k}{a(k)} \left\|\frac{(\log|x'|)^k(x'\cdot \nabla_H)^k f}{|x'|^\frac{N}{2}}\right\|_{L^2(\mathbb{G})},
\end{equation}
where $a(k)=(2k-1)!!$ is the double factorial of an odd number.
\end{cor}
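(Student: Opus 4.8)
The plan is to obtain \eqref{high.or ineq. on strtfd} as the stratified-group counterpart of Corollary \ref{higher order inequality}, and there are two equivalent ways to do it. \emph{Direct route.} Theorem \ref{higher order on strtfd} already supplies the exact higher-order identity on $\mathbb{G}$,
\[
\frac{4^k}{a_k}\left\|\frac{(\log|x'|)^k(x'\cdot\nabla_H)^k f}{|x'|^{\frac{N}{2}}}\right\|_{L^2(\mathbb{G})}^2 = \left\|\frac{f}{|x'|^{\frac{N}{2}}}\right\|_{L^2(\mathbb{G})}^2 + \sum_{m=1}^{k}\frac{O(k,m)}{a_k}\left\|\,\cdots\,\right\|_{L^2(\mathbb{G})}^2,
\]
in which every coefficient $O(k,m)$ is strictly positive (this is exactly the content of Lemma \ref{lemma 2} together with $O(k,1)=a_k$ and $O(k,k)=4^{k-1}$). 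Discarding the nonnegative sum and recalling that $a_k=(2k-1)!!^2=a(k)^2$ and $4^k=(2^k)^2$, we take square roots to get precisely \eqref{high.or ineq. on strtfd}.

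\emph{Iterative route.} Alternatively, mirroring the proof of Corollary \ref{higher order inequality}, one starts from the $p=2$ case of \eqref{sob.on.strt}, namely $\tfrac14\bigl\|f/|x'|^{\frac{N}{2}}\bigr\|_{L^2(\mathbb{G})}^2\le\bigl\|\log|x'|(x'\cdot\nabla_H)f/|x'|^{\frac{N}{2}}\bigr\|_{L^2(\mathbb{G})}^2$, substitutes $f\mapsto\log|x'|(x'\cdot\nabla_H)f$ repeatedly, and at each stage rewrites the right-hand side using the stratified analogue of identity \eqref{id.4},
\[
\left\|\frac{(\log|x'|)^k(x'\cdot\nabla_H)^k\bigl(\log|x'|(x'\cdot\nabla_H)f\bigr)}{|x'|^{\frac{N}{2}}}\right\|_{L^2(\mathbb{G})}^2 = \left\|\frac{(\log|x'|)^{k+1}(x'\cdot\nabla_H)^{k+1}f}{|x'|^{\frac{N}{2}}}\right\|_{L^2(\mathbb{G})}^2 - (k^2+k)\left\|\frac{(\log|x'|)^k(x'\cdot\nabla_H)^k f}{|x'|^{\frac{N}{2}}}\right\|_{L^2(\mathbb{G})}^2 .
\]
Collecting the resulting constants produces $\prod_{j=1}^{k}\frac{(2j-1)^2}{4}=\frac{(2k-1)!!^2}{4^k}$ in front of $\bigl\|f/|x'|^{\frac{N}{2}}\bigr\|_{L^2(\mathbb{G})}^2$, which is again \eqref{high.or ineq. on strtfd}.

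\emph{Where the work is.} The only thing not purely formal is checking that the two identities invoked in the iterative route — the stratified versions of Lemma \ref{lemma 1.} and of \eqref{id.4} — carry over to $\mathbb{G}$. Both reduce to three facts about the horizontal fields $X_1,\dots,X_N$ of \eqref{inv.vec.fields}: that $x'\cdot\nabla_H\log|x'|=\sum_{j=1}^{N}x'_jX_j\log|x'|=1$ and $X_jx'_j=1$ (true because $\log|x'|$ and $x'_j$ depend only on the first-stratum variables, on which each $X_j$ acts as $\partial_{x'_j}$), and the divergence theorem $\int_{\mathbb{G}}X_jg\,dx=0$ for $g\in C_0^\infty(\mathbb{G})$ recalled after \eqref{inv.vec.fields}. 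Granting these, the commutator computation behind Lemma \ref{lemma 1.} and the integration by parts in \eqref{id.3}--\eqref{id.4} transfer verbatim with $\partial_{x'_j}$ replaced by $X_j$; the extra lower-stratum terms of the $X_j$ never contribute, being annihilated either by acting on functions of $x'$ alone or by the divergence theorem. Hence there is no genuine obstacle: the inequality follows at once from Theorem \ref{higher order on strtfd}, and self-containedly from the iteration scheme of Corollary \ref{higher order inequality}. (Unlike its Euclidean analogue, the statement claims no sharpness, so nothing further is needed.)
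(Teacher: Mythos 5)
Your proposal is correct and follows essentially the paper's own route: the paper states this corollary without proof, asserting it follows ``by the same method'' as the Euclidean case, which is exactly your two routes (dropping the nonnegative remainder sum in Theorem \ref{higher order on strtfd}, or iterating the $p=2$ case of \eqref{sob.on.strt} via the stratified analogues of Lemma \ref{lemma 1.} and \eqref{id.4}). Your verification that the only group-specific ingredients are $x'\cdot\nabla_H\log|x'|=1$, $X_jx'_j=1$ and the divergence theorem for the $X_j$ is precisely the content hidden in that remark, so nothing is missing (and indeed, unlike the Euclidean statement, no sharpness claim needs to be addressed here).
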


\begin{thm} \label{higher order on hom}
    Let $\mathbb{G}$ be a homogeneous group of homogeneous dimension $Q$, and let $|\cdot|$ be any homogeneous quasi-norm on $\mathbb{G}$. Let $\mathcal{R}_{|x|}=\frac{d}{d|x|}$ be the radial derivative. Then for any $f \in C^\infty_0(\mathbb{G}\backslash\{{0}\})$, we have 
\begin{multline} \label{higherorder on hom}
    \frac{4^k}{a_k} \left\|\frac{(\log|x|)^k(|x|\mathcal{R}_{|x|} )^k f}{|x|^{\frac{Q}{2}}}\right\|^2_{L^2(\mathbb{G})}=  \left\|\frac{f}{|x|^\frac{Q}{2}}\right\|^2_{L^2(\mathbb{G})} \\+ \sum_{m=1}^{k}\frac{O(k,m)}{a_k}\Bigg\|\sum_{l=0}^{m-1}S(m-1,l)\frac{(\log|x|)^l(|x|\mathcal{R}_{|x|})^l f}{|x|^{\frac{Q}{2}}} \\+2\sum_{\kappa=1}^{m}S(m,\kappa)\frac{(\log|x|)^\kappa(|x|\mathcal{R}_{|x|})^\kappa f}{|x|^{\frac{Q}{2}}}\Bigg\|^2_{L^2(\mathbb{G})}, 
\end{multline}
where $a_k$ is in (\ref{sq.double}), $O(k, m)$ is in (\ref{coefficient}), $S(m, \kappa)$ and $S(m-1,l)$ are in (\ref{stirling}).
\end{thm}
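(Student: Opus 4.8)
The plan is to mirror the proof of Theorem \ref{th4.1}, replacing the partial Euler operator $\log|x'|(x'\cdot\nabla_N)$ by the Euler operator $\log|x|\,(|x|\mathcal{R}_{|x|})$ on $\mathbb{G}$ and the Euclidean cylindrical integration by parts by the polar-coordinate integration by parts already used in the proof of Theorem \ref{th.on hom.}. I argue by induction on $k$. For the base case $k=1$, the desired identity is precisely \eqref{rem. of crit. sob. on hom} with $p=2$, after expanding $C_2$ and rewriting via $|x|\mathcal{R}_{|x|}f=\mathbb{E}f$; that is,
\[
4\left\|\frac{(\log|x|)(|x|\mathcal{R}_{|x|})f}{|x|^{Q/2}}\right\|_{L^2(\mathbb{G})}^2-\left\|\frac{f}{|x|^{Q/2}}\right\|_{L^2(\mathbb{G})}^2=\left\|\frac{f}{|x|^{Q/2}}+\frac{2(\log|x|)(|x|\mathcal{R}_{|x|})f}{|x|^{Q/2}}\right\|_{L^2(\mathbb{G})}^2 .
\]

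For the inductive step I would first record the homogeneous-group analogue of Lemma \ref{lemma 1.}: since $\mathcal{R}_{|x|}\log|x|=|x|^{-1}$, one has $(|x|\mathcal{R}_{|x|})(\log|x|\,g)=\log|x|\,(|x|\mathcal{R}_{|x|})g+g$, whence by iteration
\[
(\log|x|)^\kappa(|x|\mathcal{R}_{|x|})^\kappa\bigl(\log|x|\,(|x|\mathcal{R}_{|x|})f\bigr)=\kappa(\log|x|)^\kappa(|x|\mathcal{R}_{|x|})^\kappa f+(\log|x|)^{\kappa+1}(|x|\mathcal{R}_{|x|})^{\kappa+1}f .
\]
Substituting $f\mapsto \log|x|\,(|x|\mathcal{R}_{|x|})f$ into the $k$th-order identity \eqref{higherorder on hom} and applying this relation termwise, the Stirling recurrence $S(m+1,\kappa)=\kappa S(m,\kappa)+S(m,\kappa-1)$ from \eqref{stirling} collapses the inner sums exactly as in \eqref{rec4}--\eqref{rec5}, so that the norm appearing in the $m$th summand turns into the norm of the $(m+1)$th summand with all Stirling indices shifted by one, precisely as in \eqref{A.m.m+1}.

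The only genuinely new computation is the analogue of \eqref{id.3}. Expanding the left-hand norm of the substituted identity via the relation above produces a cross term $2\,\mathrm{Re}\int_{\mathbb{G}}k(\log|x|)^{2k+1}|x|^{-Q}(|x|\mathcal{R}_{|x|})^k f\;\overline{(|x|\mathcal{R}_{|x|})^{k+1}f}\,dx$, which I would evaluate by passing to polar coordinates $x=ry$, $y\in\mathfrak{S}$, as in \eqref{proof of sob. on hom}, integrating by parts in $r$ and using $(|x|\mathcal{R}_{|x|})^{k+1}f=|x|\mathcal{R}_{|x|}\bigl((|x|\mathcal{R}_{|x|})^k f\bigr)$. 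This should give
\[
2\,\mathrm{Re}\int_{\mathbb{G}}\frac{k(\log|x|)^{2k+1}}{|x|^{Q}}(|x|\mathcal{R}_{|x|})^k f\;\overline{(|x|\mathcal{R}_{|x|})^{k+1}f}\,dx=-(2k^2+k)\left\|\frac{(\log|x|)^k(|x|\mathcal{R}_{|x|})^k f}{|x|^{Q/2}}\right\|_{L^2(\mathbb{G})}^2 ,
\]
hence the homogeneous-group version of \eqref{id.4}. Since $|x|\mathcal{R}_{|x|}$ acts purely in the radial variable, this step is in fact cleaner than in the Euclidean cylindrical case: there is no sum over the coordinates $x'_j$ and no tangential error terms, so I expect it to go through without difficulty, the only point to verify being that the boundary terms at $r\to0$ and $r\to\infty$ vanish, which is guaranteed by $f\in C_0^\infty(\mathbb{G}\backslash\{0\})$ together with the logarithmic and power weights.

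Finally, combining the homogeneous analogue of \eqref{id.4} with the index-shifted identity obtained above and with the base case $k=1$, and then invoking the recurrence of Lemma \ref{lemma 2} for $O(k,m)$ together with $O(k,1)=a_k$, $O(k,k)=4^{k-1}$ and $(4k(k+1)+1)a_k=a_{k+1}$ coming from \eqref{coefficient} and \eqref{sq.double}, one assembles the $(k+1)$th-order identity exactly as in the chain \eqref{id.5}--\eqref{id.11}. The main obstacle is therefore bookkeeping rather than analysis: one has to make sure that the reindexing $m\mapsto m-1$ in the sum of the squared $L^2$-norms lines up correctly with the recurrence of Lemma \ref{lemma 2}, but this is handled verbatim as in the proof of Theorem \ref{th4.1}, and no new idea beyond the polar-coordinate integration by parts is needed.
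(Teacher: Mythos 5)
Your proposal is correct and is essentially the proof the paper intends: the paper omits the argument precisely because, as it states, the same method as Theorem \ref{th4.1} applies, and you carry this out faithfully — the base case is the $p=2$ case of \eqref{rem. of crit. sob. on hom}, the analogue of Lemma \ref{lemma 1.} follows from $\mathcal{R}_{|x|}\log|x|=|x|^{-1}$, and your polar-coordinate integration by parts for the cross term (giving the factor $-(2k^2+k)$, with the weight $r^{Q-1}$ cancelling $|x|^{-Q}\cdot|x|$ so that only the radial derivative of $(\log r)^{2k+1}$ survives) reproduces the analogue of \eqref{id.4}, after which the combinatorial assembly via Lemma \ref{lemma 2} is verbatim.
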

\begin{rem}
   The identity \eqref{higherorder on hom} is the critical case $\alpha=\frac{Q}{2}$ of the identity in \cite[Theorem 3.10]{RSY18b}.
\end{rem}
\begin{cor} \label{higher order inequality on hom}
     Let $\mathbb{G}$ be a homogeneous group of homogeneous dimension $Q$, and let $|\cdot|$ be any homogeneous quasi-norm on $\mathbb{G}$. Let $\mathcal{R}_{|x|}=\frac{d}{d|x|}$ be the radial derivative. Then for any $f \in C^\infty_0(\mathbb{G}\backslash\{{0}\})$, we have 
\begin{equation} \label{high.or ineq. on hom}
  \left\|\frac{f}{|x|^\frac{Q}{2}}\right\|_{L^2(\mathbb{G})}\leq \frac{2^k}{a(k)} \left\|\frac{(\log|x|)^k(|x|\mathcal{R}_{|x|})^k f}{|x|^{\frac{Q}{2}}}\right\|_{L^2(\mathbb{G})},
\end{equation}
where $a(k)=(2k-1)!!$ is the double factorial of odd number, and $\frac{2^k}{a(k)}$ is the sharp constant and non-attainable.
\end{cor}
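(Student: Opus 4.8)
The plan is to derive \eqref{high.or ineq. on hom} in the same way the Euclidean inequality \eqref{higherorder inequality 0} was obtained in the proof of Corollary \ref{higher order inequality}, with Cartesian integration replaced throughout by the polar decomposition $\int_{\mathbb{G}}g\,dx=\int_0^\infty\int_{\mathfrak{S}}g(ry)r^{Q-1}\,d\sigma(y)\,dr$. Write $\mathbb{E}=|x|\mathcal{R}_{|x|}$ and $L=(\log|x|)\mathbb{E}$. First I would record two ingredients: the commutation rule $L^\kappa\big(\mathbb{E}(\log|x|\,\mathbb{E}f)\big)=\kappa L^\kappa f+L^{\kappa+1}f$, which is the homogeneous-group analogue of Lemma \ref{lemma 1.} and follows from $\mathbb{E}\log|x|=1$ together with the product rule for $\mathcal{R}_{|x|}$; and the $L^2$ identity
\[
\Big\|\tfrac{L^k(Lf)}{|x|^{Q/2}}\Big\|_{L^2(\mathbb{G})}^2=\Big\|\tfrac{L^{k+1}f}{|x|^{Q/2}}\Big\|_{L^2(\mathbb{G})}^2-(k^2+k)\Big\|\tfrac{L^kf}{|x|^{Q/2}}\Big\|_{L^2(\mathbb{G})}^2,
\]
which is the analogue of \eqref{id.4} and is proved by the single integration by parts already carried out in \eqref{proof of sob. on hom}. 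Substituting $f\mapsto Lf$ repeatedly into the $p=2$ case of \eqref{sob.on.hom} and feeding in these two identities telescopes the constants $1^2,3^2,5^2,\dots,(2k-1)^2$ and yields $\tfrac{(2k-1)!!^2}{2^{2k}}\big\|f|x|^{-Q/2}\big\|_{L^2(\mathbb{G})}^2\le\big\|L^kf\,|x|^{-Q/2}\big\|_{L^2(\mathbb{G})}^2$, which is \eqref{high.or ineq. on hom}. Equivalently, one simply drops the nonnegative sum $\sum_{m=1}^{k}\tfrac{O(k,m)}{a_k}\|\cdots\|^2$ on the right-hand side of the identity \eqref{higherorder on hom}, using $O(k,m)>0$ and $a_k=(2k-1)!!^2$, and takes the square root.

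For the sharpness of $2^k/(2k-1)!!$ I would transplant the test-function computation from the proof of Corollary \ref{higher order inequality}. Fix a radial cutoff $\psi_\delta\in C^\infty(\mathbb{G})$ with $\psi_\delta\equiv 0$ on $\{|x|\le 1+\delta\}$, $\psi_\delta\equiv 1$ on $\{|x|\ge 1+2\delta\}$, $0\le\psi_\delta\le 1$ and $|\mathcal{R}_{|x|}^{\,k}\psi_\delta|\le C_k\delta^{-k}$, and take $g_A(x)=\psi_\delta(x)(\log|x|)^A$ with $A<-\tfrac12$. On $\{|x|\ge 1+2\delta\}$ one has $\mathbb{E}^k(\log|x|)^A=\prod_{j=0}^{k-1}(A-j)(\log|x|)^{A-k}$, so the dominant part of the Rayleigh quotient $\int_{\mathbb{G}}|L^kg_A|^2|x|^{-Q}dx\big/\int_{\mathbb{G}}|g_A|^2|x|^{-Q}dx$ is $\prod_{j=0}^{k-1}(A-j)^2$, which tends to $\big(\tfrac12\cdot\tfrac32\cdots\tfrac{2k-1}{2}\big)^2=(2k-1)!!^2/2^{2k}$ as $A\nearrow-\tfrac12$; by polar coordinates the denominator equals $\sigma(\mathfrak{S})\int_{1+\delta}^\infty\psi_\delta^2(\log r)^{2A}r^{-1}\,dr$, which diverges in this limit, whereas the contribution of the transition annulus $\{1+\delta\le|x|\le 1+2\delta\}$ stays bounded because the integrand there is bounded; this last estimate is seen by expanding $\mathbb{E}^k$ through the Scherk formula \eqref{Eul.op.Stirling}, applying the general Leibniz rule, and invoking $|\mathcal{R}_{|x|}^{\,k}\psi_\delta|\le C_k\delta^{-k}$. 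Hence the quotient converges to $(2k-1)!!^2/2^{2k}$ and the constant is optimal. For non-attainability I would argue as at the end of the proof of Corollary \ref{higher order inequality}: equality would force every remainder in \eqref{higherorder on hom} to vanish, in particular the $m=1$ one, which by Theorem \ref{th.on hom.} happens only for $f=(\log|x|)^{-1/2}\varphi(x/|x|)$ with some $\varphi:\mathfrak{S}\to\mathbb{C}$; then $\int_{\mathbb{G}}|f|^2|x|^{-Q}dx=\big(\int_{\mathfrak{S}}|\varphi|^2\,d\sigma\big)\int_0^\infty|\log r|^{-1}r^{-1}\,dr=+\infty$ unless $\varphi=f\equiv 0$.

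The main obstacle is the sharpness step. One must check that, for a general homogeneous quasi-norm $|\cdot|$ --- which is only Lipschitz away from the origin and need not be smooth --- a radial cutoff $\psi_\delta$ with the derivative bounds $|\mathcal{R}_{|x|}^{\,k}\psi_\delta|\le C_k\delta^{-k}$ can indeed be chosen, and that the annulus contribution is genuinely $o(1)$ against the logarithmically divergent main term as $A\nearrow-\tfrac12$; the Scherk expansion \eqref{Eul.op.Stirling} together with the general Leibniz rule is precisely what makes this error estimate go through, exactly as in Corollary \ref{higher order inequality}. Everything else is routine bookkeeping: the iteration uses only the polar decomposition and the single integration by parts already done in \eqref{proof of sob. on hom} and \eqref{id.4}.
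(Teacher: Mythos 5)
Your proposal is correct and takes essentially the same route the paper intends: the homogeneous-group statements (Theorem \ref{higher order on hom} and Corollary \ref{higher order inequality on hom}) are given there without separate proof, with the explicit remark that they follow by the same method as Theorem \ref{th4.1} and Corollary \ref{higher order inequality}, and that is exactly what you execute --- the analogues of Lemma \ref{lemma 1.} and \eqref{id.4} obtained via the polar decomposition and the one-dimensional integration by parts of \eqref{proof of sob. on hom}, the telescoping of the constants $1^2,3^2,\dots,(2k-1)^2$ (equivalently dropping the nonnegative remainders in \eqref{higherorder on hom}), the test functions $\psi_\delta(x)(\log|x|)^A$ with $A\nearrow-\tfrac12$ for sharpness, and non-attainability through the vanishing of the $m=1$ remainder as in Theorem \ref{th.on hom.}. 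The only caution is notational: your shorthand $L^k$ must be read throughout as $(\log|x|)^k\bigl(|x|\mathcal{R}_{|x|}\bigr)^k$ and not as the $k$-th power of the operator $\log|x|\,|x|\mathcal{R}_{|x|}$ (these differ for $k\ge 2$), which is precisely the distinction the analogue of \eqref{id.4} is there to handle, and the smoothness issue for a general quasi-norm that you flag is present at the same level in the paper itself, which treats radial test functions via the polar decomposition.
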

\begin{rem}
   The inequality \eqref{high.or ineq. on hom} is the critical case $\alpha=\frac{Q}{2}$ of the inequality in \cite[Theorem 3.10]{RSY18b}.
\end{rem}
\section{Applications} 
In this section we investigate (critical) Caffarelli-Kohn-Nirenberg type inequalities and (critical) uncertainty type principles with logarithmic weights. 

\begin{thm} \label{Lp caffarelli with remainder}
Let $x=(x',x'') \in \mathbb{R}^N\times\mathbb{R}^{n-N}$. Let $1<p,q<\infty$, $0<r<\infty$, with $p+q\geq r$, $\delta \in [0, 1] \cap \left[\frac{r-q}{r},\frac{p}{r}\right]$ and $b,c \in \mathbb{R}$. Assume that 
$$\frac{\delta r}{p}+\frac{(1-\delta)r}{q}=1 \ \ and  \ \ c=-\frac{N}{p}\delta+b(1-\delta).$$
Then we have the following Caffarelli-Kohn-Nirenberg type inequality with a logarithmic weight for any complex-valued function $f\in C^\infty_0(\mathbb{R}^n \backslash \{{x'=0}\})$:
\small    
\begin{multline} \label{Lp c.caf with remainder}
\||x'|^c f\|_{L^r(\mathbb{R}^n)}\leq p^\delta \Bigg(\left\|\frac{\log|x'|(x'\cdot \nabla_N) f}{|x'|^\frac{N}{p}} \right\|_{L^p(\mathbb{R}^n)}^p-\\
    -\frac{1}{p^p}\int_{\mathbb{R}^n}C_p\left(\frac{p\log|x'|(x'\cdot \nabla_N) f}{|x'|^\frac{N}{p}} ,\frac{p(\log|x'|)^{-\frac{1}{p}+1}}{|x'|^\frac{N}{p}}(x'\cdot\nabla_N)\left(f(\log|x'|)^\frac{1}{p}\right)\right)dx\Bigg)^\frac{\delta}{p}
    \\\times\||x'|^bf\|^{1-\delta}_{L^q(\mathbb{R}^n)},
\end{multline} 
\normalsize
where $|x'|$  is the Euclidean norm on $\mathbb{R}^N$, $\nabla_N$ is the standard gradient on $\mathbb{R}^N$ and $C_p$ is functional from $\eqref{Cp}$. The constant in the inequality \eqref{Lp c.caf with remainder} is sharp for $p=q$ with $\frac{N}{p}=-b$ or for $\delta=0,1.$
\end{thm}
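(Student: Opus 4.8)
The plan is to deduce \eqref{Lp c.caf with remainder} from a single application of Hölder's inequality together with the sharp remainder identity \eqref{Cp rem. of crit. sob.} of Theorem \ref{th2.1}. The first step is the pointwise algebraic factorisation
\[
|x'|^{cr}\,|f|^{r}=\left(\frac{|f|^{p}}{|x'|^{N}}\right)^{\frac{\delta r}{p}}\left(|x'|^{bq}\,|f|^{q}\right)^{\frac{(1-\delta)r}{q}},
\]
which holds because the exponent of $|f|$ on the right is $\delta r+(1-\delta)r=r$, while the exponent of $|x'|$ is $-\frac{N\delta r}{p}+b(1-\delta)r=cr$, the last equality being exactly the hypothesis $c=-\frac{N}{p}\delta+b(1-\delta)$. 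Integrating this over $\mathbb{R}^{n}$ and applying Hölder's inequality with the pair of exponents $\left(\frac{p}{\delta r},\frac{q}{(1-\delta)r}\right)$ — which are conjugate precisely because $\frac{\delta r}{p}+\frac{(1-\delta)r}{q}=1$, and both at least $1$ thanks to $\delta\in\left[\frac{r-q}{r},\frac{p}{r}\right]$ (the boundary/degenerate values $\delta=0$ and $\delta=1$ being treated directly, where the inequality is either a plain identity or reduces to \eqref{sob}) — and then taking $r$-th roots gives the clean Caffarelli–Kohn–Nirenberg inequality
\[
\||x'|^{c}f\|_{L^{r}(\mathbb{R}^{n})}\le\left\|\frac{f}{|x'|^{\frac{N}{p}}}\right\|_{L^{p}(\mathbb{R}^{n})}^{\delta}\,\||x'|^{b}f\|_{L^{q}(\mathbb{R}^{n})}^{1-\delta}.
\]

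The second step upgrades the first factor to its remainder form using \eqref{Cp rem. of crit. sob.}, which gives
\[
\left\|\frac{f}{|x'|^{\frac{N}{p}}}\right\|_{L^{p}(\mathbb{R}^{n})}^{p}
=p^{p}\left(\left\|\frac{\log|x'|(x'\cdot\nabla_{N})f}{|x'|^{\frac{N}{p}}}\right\|_{L^{p}(\mathbb{R}^{n})}^{p}-\frac{1}{p^{p}}\int_{\mathbb{R}^{n}}C_{p}\,dx\right),
\]
with the arguments of $C_{p}$ exactly as in \eqref{Cp rem. of crit. sob.}. Raising both sides to the power $\frac{\delta}{p}$ produces the factor $p^{\delta}$ and turns $\left\|\frac{f}{|x'|^{N/p}}\right\|_{L^{p}}^{\delta}$ into exactly the bracketed expression to the power $\frac{\delta}{p}$ appearing in \eqref{Lp c.caf with remainder}; multiplying by $\||x'|^{b}f\|_{L^{q}}^{1-\delta}$ completes the proof of the inequality.

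For the sharpness assertions I would specialise the parameters. If $\delta=0$ the constraints force $r=q$ and $c=b$, so \eqref{Lp c.caf with remainder} is an equality and the constant $p^{0}=1$ is trivially optimal; if $\delta=1$ they force $r=p$ and $c=-\frac{N}{p}$, so \eqref{Lp c.caf with remainder} is literally \eqref{sob}, whose constant $p$ is sharp by Theorem \ref{th2.1}(i). If $p=q$ and $-b=\frac{N}{p}$, then again $r=p$ and $c=-\frac{N}{p}$, and \eqref{Lp c.caf with remainder} becomes the $\delta$-th power of \eqref{sob}, so the truncated logarithmic family built from $(\log|x'|)^{-1/p}$ used in Theorem \ref{th2.1} and in the proof of Corollary \ref{higher order inequality} shows that $p^{\delta}$ cannot be improved. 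The only genuine obstacle is bookkeeping: one must verify that the exponent identities among $c,b,r,p,q,\delta,N$ make the Hölder step legitimate and that the power $\frac{\delta}{p}$ of the remainder bracket reconstructs $\left\|\frac{f}{|x'|^{N/p}}\right\|_{L^{p}}^{\delta}$ precisely; no new analytic input beyond Theorem \ref{th2.1} is needed.
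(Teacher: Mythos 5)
Your proposal is correct and matches the paper's proof essentially step for step: the same pointwise factorisation and Hölder step with the conjugate exponents $\frac{p}{\delta r}$ and $\frac{q}{(1-\delta)r}$, the same direct treatment of the boundary cases $\delta=0,1$, and the same substitution of the identity \eqref{Cp rem. of crit. sob.} raised to the power $\frac{\delta}{p}$ to produce the factor $p^{\delta}$ and the remainder bracket. The only cosmetic difference is the sharpness argument for $p=q$, $-b=\frac{N}{p}$: the paper observes that Hölder's inequality is then an equality for every $f$, so together with the identity the estimate \eqref{Lp c.caf with remainder} holds with equality, whereas you invoke the approximating family from Theorem \ref{th2.1}; both are valid.
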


\begin{rem}
    The inequality \eqref{Lp c.caf with remainder} provides the critical case of \cite[Theorem 8]{KY24}.
\end{rem}

\begin{proof}[Proof of Theorem \ref{Lp caffarelli with remainder}.]
Case $\delta=0$. In this case, we 
          have $q=r$ and $b=c$ by $\frac{\delta r}{2}+\frac{(1-\delta)r}{q}=1$ and $c=-\frac{N}{2}\delta+b(1-\delta),$ respectively. Then, the inequality (\ref{Lp c.caf with remainder}) reduces to the trivial estimate
$$\||x'|^b f\|_{L^q(\mathbb{R}^n)}\leq \|x'|^bf\|_{L^q(\mathbb{R}^n)}.$$
Case $\delta=1$. Notice that in this case, $r=p$ and $c=-\frac{N}{p}$. By \eqref{Cp rem. of crit. sob.}, we have the equality
\begin{multline}
\left\|\frac{f}{|x'|^\frac{N}{p}}\right\|_{L^p(\mathbb{R}^n)}= p\Bigg(\left\|\frac{\log|x'|(x'\cdot \nabla_N) f}{|x'|^\frac{N}{p}} \right\|_{L^p(\mathbb{R}^n)}^p\\-\frac{1}{p^p}\int_{\mathbb{R}^n}C_p\left(\frac{p\log|x'|(x'\cdot \nabla_N) f}{|x'|^\frac{N}{p}} ,\frac{p(\log|x'|)^{-\frac{1}{p}+1}}{|x'|^\frac{N}{p}}(x'\cdot\nabla_N)\left(f(\log|x'|)^\frac{1}{p}\right)\right)dx\Bigg)^\frac{1}{p}.
\end{multline}
Case $\delta\in(0,1)\cap \left [\frac{r-q}{r},\frac{p}{r}\right] $. Taking into account $c=-\frac{N}{p}\delta+b(1-\delta)$, a direct calculation gives 
$$\||x'|^c f\|_{L^r(\mathbb{R}^n)} = \left( \int_{\mathbb{R}^n}|x'|^{cr}|f|^rdx \right )^\frac{1}{r} = \left( \int_{\mathbb{R}^n}\frac{|f|^{\delta r}}{|x'|^{\frac{N}{p}\delta r}}\frac{|f|^{(1-\delta)r}}{|x'|^{-b(1-\delta)r}}dx \right)^\frac{1}{r}.$$
Since we have $\delta\in(0,1)\cap \left [\frac{r-q}{r},\frac{p}{r}\right]$ and $q+p\geq r$, then by using H\"older's inequality for $\frac{\delta r}{p}+\frac{(1-\delta)r}{q}=1$, we obtain 
\begin{equation} \label{holder.r,p,q}
\begin{split}
\||x'|^c f\|_{L^r(\mathbb{R}^n)}&\leq \left(\int_{\mathbb{R}^n}\frac{|f|^p}{|x'|^N}dx\right)^\frac{\delta}{p}\left(\int_{\mathbb{R}^n}\frac{|f|^q}{|x'|^{-bq}}dx\right)^\frac{1-\delta}{q} \\ &= \Bigg \|\frac{f}{|x'|^\frac{N}{p}}\Bigg \|^\delta_{L^p}\Bigg \|\frac{f}{|x'|^{-b}}\Bigg \|^{1-\delta}_{L^q}.
\end{split}
\end{equation}

Here we note that when $q=p$ and $\frac{N}{p}=-b$, H\"older's equality condition is satisfied for any function. 

By (\ref{Cp rem. of crit. sob.}) we have the identity
\begin{multline} \label{rem. of crit. sob. ^delta}
\left\|\frac{f}{|x'|^\frac{N}{p}}\right\|_{L^p(\mathbb{R}^n)}^\delta = p^\delta \Bigg(\left\|\frac{\log|x'|(x'\cdot \nabla_N) f}{|x'|^\frac{N}{p}} \right\|_{L^p(\mathbb{R}^n)}^p\\-\frac{1}{p^p}\int_{\mathbb{R}^n}C_p\left(\frac{p\log|x'|(x'\cdot \nabla_N) f}{|x'|^\frac{N}{p}} ,\frac{p(\log|x'|)^{-\frac{1}{p}+1}}{|x'|^\frac{N}{p}}(x'\cdot\nabla_N)\left(f(\log|x'|)^\frac{1}{p}\right)\right)dx\Bigg)^\frac{\delta}{p}.
\end{multline}

Putting this in \eqref{holder.r,p,q}, one has
\small
\begin{multline}
\||x'|^c f\|_{L^r(\mathbb{R}^n)}\leq p^\delta \Bigg(\left\|\frac{\log|x'|(x'\cdot \nabla_N) f}{|x'|^\frac{N}{p}} \right\|_{L^p(\mathbb{R}^n)}^p\\-\frac{1}{p^p}\int_{\mathbb{R}^n}C_p\left(\frac{p\log|x'|(x'\cdot \nabla_N) f}{|x'|^\frac{N}{p}} ,\frac{p(\log|x'|)^{-\frac{1}{p}+1}}{|x'|^\frac{N}{p}}(x'\cdot\nabla_N)\left(f(\log|x'|)^\frac{1}{p}\right)\right)dx\Bigg)^\frac{\delta}{p}\\\times\Bigg \|\frac{f}{|x'|^{-b}}\Bigg \|^{1-\delta}_{L^q}.
\end{multline}
\normalsize

The constant in \eqref{rem. of crit. sob. ^delta} is sharp, from \eqref{Cp rem. of crit. sob.}. In the case $q=p$ and $\frac{N}{p}=-b$ H\"older's equality condition for the inequality \eqref{holder.r,p,q} holds true. Therefore, the constant in \eqref{Lp c.caf with remainder} is sharp when $q=p$ and $\frac{N}{p}=-b$.
\end{proof}

\begin{cor}[Uncertainty type principle with a logarithmic weight] \label{uncertainty}
    Let $x=(x',x'') \in \mathbb{R}^N\times\mathbb{R}^{n-N}, 1< N\leq n$.  Then for any $f \in C^\infty_0(\mathbb{R}^n\backslash\{{x'=0}\})$ and $\frac{1}{N}+\frac{1}{q}=1,$ we have
    \begin{equation} \label{crit. uncertainty} 
     \int_{\mathbb{R}^n}|f|^2dx\leq N\left\||x'|^{-1}\log|x'|(x'\cdot\nabla_N) f\right\|_{L^N(\mathbb{R}^n)} \left\||x'|f\right\|_{L^q(\mathbb{R}^n)},
    \end{equation}
where $\nabla_N$ is the standard gradient on $\mathbb{R}^N$.
\end{cor}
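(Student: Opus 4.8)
The plan is to derive the uncertainty principle \eqref{crit. uncertainty} directly from the critical Sobolev type inequality of Theorem~\ref{th2.1}(i), specialised to $p=N$, followed by a single application of Hölder's inequality. First I would observe that the hypothesis $1<N\leq n$ in particular gives $1<N<\infty$, so \eqref{sob} is available with $p=N$; since in that case $\tfrac{N}{p}=1$, it takes the form
\begin{equation*}
\left\|\frac{f}{|x'|}\right\|_{L^N(\mathbb{R}^n)}\leq N\left\|\frac{\log|x'|\,(x'\cdot\nabla_N)f}{|x'|}\right\|_{L^N(\mathbb{R}^n)}=N\left\||x'|^{-1}\log|x'|\,(x'\cdot\nabla_N)f\right\|_{L^N(\mathbb{R}^n)}
\end{equation*}
for every $f\in C^\infty_0(\mathbb{R}^n\backslash\{x'=0\})$.

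Next I would split $|f|^2$ as the product $|f|^2=\dfrac{|f|}{|x'|}\cdot\bigl(|x'|\,|f|\bigr)$ and apply Hölder's inequality with the conjugate exponents $N$ and $q$, which is exactly the pair imposed by the hypothesis $\tfrac1N+\tfrac1q=1$ (note that $N\geq2$ forces $q\in(1,2]$, so this is a genuine Hölder pairing). This yields
\begin{equation*}
\int_{\mathbb{R}^n}|f|^2\,dx=\int_{\mathbb{R}^n}\frac{|f|}{|x'|}\,\bigl(|x'|\,|f|\bigr)\,dx\leq\left\|\frac{f}{|x'|}\right\|_{L^N(\mathbb{R}^n)}\,\bigl\||x'|f\bigr\|_{L^q(\mathbb{R}^n)}.
\end{equation*}
Chaining this with the previous display gives precisely \eqref{crit. uncertainty}.

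Alternatively, the same conclusion can be read off from the critical Caffarelli--Kohn--Nirenberg inequality of Theorem~\ref{Lp caffarelli with remainder} (equivalently \eqref{crit_CKN_intro}) by taking $r=2$, $\delta=\tfrac12$, $p=N$, $b=1$, $c=0$: one checks the admissibility conditions $\tfrac{\delta r}{p}+\tfrac{(1-\delta)r}{q}=\tfrac1N+\tfrac1q=1$, $c=-\tfrac{N}{p}\delta+b(1-\delta)=0$, $p+q\geq r$, and $\delta\in[0,1]\cap[\tfrac{r-q}{r},\tfrac{p}{r}]$ (using $N\geq2$), and then squares the resulting estimate, noting $p^{2\delta}=N$.

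I do not expect any real obstacle here, as both routes are short. The only points that need a line of verification are that $p=N$ is an admissible exponent for Theorem~\ref{th2.1} (guaranteed by $1<N$) and that $N$ and $q$ form a Hölder conjugate pair (guaranteed by $\tfrac1N+\tfrac1q=1$); on the CKN route one additionally verifies the parameter constraints listed above.
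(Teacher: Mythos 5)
Your proposal is correct and essentially matches the paper: the paper proves Corollary \ref{uncertainty} exactly via your alternative route, namely dropping the remainder in Theorem \ref{Lp caffarelli with remainder} with $r=2$, $c=0$, $p=N$, $b=1$, $\delta=\tfrac12$ and squaring. Your primary route (the splitting $|f|^2=\tfrac{|f|}{|x'|}\cdot(|x'||f|)$ with H\"older exponents $N,q$ followed by \eqref{sob} at $p=N$) is just the unpacked version of that same argument, since the CKN proof itself consists of this H\"older step plus the critical Sobolev inequality.
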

\begin{proof}[Proof of Corollary \ref{uncertainty}]
In \eqref{Lp c.caf with remainder}, we drop the remainder term and take $r=2$, $c=0$, $p=N$, and $b=1$, then $\delta=\frac{1}{2}$. Therefore, we get  
    \begin{equation*}   
    \|f\|_{L^2(\mathbb{R}^n)}\leq N^\frac{1}{2} \Bigg\|\frac{(x'\cdot\nabla_N)f}{|x'|}\log|x'|  \Bigg\|^{\frac{1}{2}}_{L^N(\mathbb{R}^n)}\||x'|f\|^\frac{1}{2}_{L^q(\mathbb{R}^n)},
    \end{equation*}
which implies \eqref{crit. uncertainty}.
\end{proof}
\begin{rem}
In case $N=q=n=2$, the inequality \eqref{crit. uncertainty} reduces to 
\begin{equation} \label{critical case of HPW}     \left(\int_{\mathbb{R}^2}|f|^2dx\right)^2 \leq 4 \int_{\mathbb{R}^2} (\log|x|)^2\left|\frac{x}{|x|}\cdot\nabla f\right|^2dx\int_{\mathbb{R}^2}|x|^2 |f|^2dx,
\end{equation}
where $|x|$ is the standard Euclidean norm on $\mathbb{R}^2$ and $\nabla$ is the standard gradient on $\mathbb{R}^2$. \\

The inequality \eqref{critical case of HPW} is the critical case of the Heisenberg-Pauli-Weyl uncertainty type principle from \cite[Remark 2.10]{RS17a}.
By using the Schwarz inequality on the right-hand side of \eqref{critical case of HPW} we get
\begin{equation}
\left(\int_{\mathbb{R}^2}|f|^2dx\right)^2 \leq 4 \int_{\mathbb{R}^2} (\log|x|)^2\left|\nabla f\right|^2dx\int_{\mathbb{R}^2}|x|^2 |f|^2dx,
\end{equation}
which is the critical case of the classical Heisenberg-Pauli-Weyl uncertainty principle. 
\end{rem}
\begin{rem}
In the case where $q=1, \ r=2, \ N=p=2^*=\frac{2n}{n-2}$, and $c=b=-1$, the inequality \eqref{Lp caffarelli with remainder} without the remainder term reduces to the following Nash-type inequality:
    \begin{equation}\label{Nash type1}
\left\|\frac{f}{|x'|}\right\|_{L^2(\mathbb{R}^n)}^{1+\frac{2}{n}} \leq {\frac{2n}{n-2}}\left\|\frac{f}{|x'|}\right\|^\frac{2}{n}_{L^1(\mathbb{R}^n)} \left\|\frac{\log|x'|(x'\cdot \nabla_N) f}{|x'|} \right\|_{L^{2^*}(\mathbb{R}^n)}.
\end{equation}
\end{rem}

By substituting inequality (\ref{higherorder inequality}) into $p=2$ version of \eqref{holder.r,p,q} we obtain the higher order Caffarelli-Kohn-Nirenberg type inequality with a logarithmic weight.

\begin{cor}
Let $x=(x',x'') \in \mathbb{R}^N\times\mathbb{R}^{n-N}$. Let $1<q<\infty$, $0<r<\infty$ with $q+2\geq r$, $\delta \in [0, 1] \cap \left[\frac{r-q}{r},\frac{2}{r}\right]$ and $b,c \in \mathbb{R}$. Assume that 
$$\frac{\delta r}{2}+\frac{(1-\delta)r}{q}=1 \ \ and  \ \ c=-\frac{N}{2}\delta+b(1-\delta).$$
Then we have the following higher order critical Caffarelli-Kohn-Nirenberg type inequality for all $f\in C^\infty_0(\mathbb{R}^n \backslash \{{x'=0}\})$ and any $k\in\mathbb{N}$:
\begin{equation} \label{h.c.caf}
\||x'|^c f\|_{L^r(\mathbb{R}^n)}\leq \bigg(\frac{2^k}{a(k)}\bigg)^\delta \||x'|^\frac{-N}{2}(\log|x'|)^k(x'\cdot \nabla_N)^k f\|_{L^2(\mathbb{R}^n)}^\delta \||x'|^bf\|^{1-\delta}_{L^q(\mathbb{R}^n)},
\end{equation} 
where $|x'|$ is the Euclidean norm on $\mathbb{R}^N$, $\nabla_N$ is the standard gradient on $\mathbb{R}^N$, and $a(k)=(2k-1)!!$ is the double factorial of odd number.
\end{cor}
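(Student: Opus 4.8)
The plan is to follow the proof of Theorem~\ref{Lp caffarelli with remainder} verbatim in the special case $p=2$, but to feed into it the higher-order critical Sobolev type inequality \eqref{higherorder inequality} in place of the first-order one. First I would dispose of the two endpoints. If $\delta=0$, the constraint $\frac{\delta r}{2}+\frac{(1-\delta)r}{q}=1$ forces $q=r$ and $c=b(1-\delta)=b$, so \eqref{h.c.caf} is the trivial identity $\||x'|^b f\|_{L^q(\mathbb{R}^n)}\le\||x'|^b f\|_{L^q(\mathbb{R}^n)}$. If $\delta=1$, the same constraint forces $r=2$ and $c=-\tfrac{N}{2}$, and \eqref{h.c.caf} is then exactly the higher-order inequality \eqref{higherorder inequality}.

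For the generic range $\delta\in(0,1)\cap\left[\frac{r-q}{r},\frac{2}{r}\right]$, I would first recover the $p=2$ instance of \eqref{holder.r,p,q}. Using $c=-\frac{N}{2}\delta+b(1-\delta)$, split
\[
\||x'|^c f\|_{L^r(\mathbb{R}^n)}^r=\int_{\mathbb{R}^n}\frac{|f|^{\delta r}}{|x'|^{\frac{N}{2}\delta r}}\cdot\frac{|f|^{(1-\delta)r}}{|x'|^{-b(1-\delta)r}}\,dx,
\]
and apply H\"older's inequality with the exponent pair $\left(\frac{2}{\delta r},\frac{q}{(1-\delta)r}\right)$. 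These exponents are admissible, i.e.\ both $\ge 1$, precisely because $\delta\le\frac{2}{r}$ and $\delta\ge\frac{r-q}{r}$, and they are conjugate because $\frac{\delta r}{2}+\frac{(1-\delta)r}{q}=1$ (the hypothesis $q+2\ge r$ is exactly what makes this admissible $\delta$-window nonempty). This gives
\[
\||x'|^c f\|_{L^r(\mathbb{R}^n)}\le\left\|\frac{f}{|x'|^{N/2}}\right\|_{L^2(\mathbb{R}^n)}^{\delta}\,\||x'|^b f\|_{L^q(\mathbb{R}^n)}^{1-\delta}.
\]

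Finally, I would raise \eqref{higherorder inequality} to the power $\delta$,
\[
\left\|\frac{f}{|x'|^{N/2}}\right\|_{L^2(\mathbb{R}^n)}^{\delta}\le\left(\frac{2^k}{a(k)}\right)^{\delta}\left\|\frac{(\log|x'|)^k(x'\cdot\nabla_N)^k f}{|x'|^{N/2}}\right\|_{L^2(\mathbb{R}^n)}^{\delta},
\]
and substitute it into the previous display to arrive at \eqref{h.c.caf}. I do not expect a genuine obstacle: the one point that needs care is verifying that the H\"older exponents above are legitimate, which is encoded entirely in the stated constraints on $\delta$ and $r$; the rest is bookkeeping with the $|x'|$-weights and the exponent identity for $c$. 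Since the corollary asserts no sharpness, there is no need to track H\"older's equality case or the non-attainability of the constant.
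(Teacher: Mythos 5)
Your proposal is correct and follows essentially the same route as the paper: the paper obtains \eqref{h.c.caf} precisely by substituting the higher-order inequality \eqref{higherorder inequality} into the $p=2$ case of the H\"older splitting \eqref{holder.r,p,q}, exactly as you do. Your additional explicit treatment of the endpoints $\delta=0,1$ and the verification of the H\"older exponent conditions simply mirrors the proof of Theorem \ref{Lp caffarelli with remainder} and introduces nothing genuinely different.
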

\begin{rem}
In case when $N=2k$, $b=k$, $c=0$, $r=q=2$  and $\delta=\frac{1}{2}$, then \eqref{h.c.caf} implies the following higher order uncertainty type principle with a logarithmic weight:
\begin{equation} \label{h.log.uncer}
\|f\|_{L^2(\mathbb{R}^n)}^2\leq \frac{2^k}{a(k)} \||x'|^{-k}(\log|x'|)^k(x'\cdot \nabla_{2k})^k f\|_{L^2(\mathbb{R}^n)}\||x'|^kf\|_{L^2(\mathbb{R}^n)},
\end{equation} 
where $|x'|$ is the Euclidean norm on $\mathbb{R}^{2k}$, $\nabla_{2k}$ is the standard gradient on $\mathbb{R}^{2k}$, and $a(k)=(2k-1)!!$ is the double factorial of odd number.
\end{rem}
To obtain the aforementioned results on the stratified Lie groups, we employ the same method. For the sake of clarity and brevity, we will now present these results without providing a formal proof.
\vspace{4pt}

For any complex-valued function, by using identity \eqref{rem. of crit. sob. on strtfd} we obtain the following Caffarelli-Kohn-Nirenberg type inequality with a logarithmic weight and a remainder term on stratified Lie groups.

\begin{thm} \label{Lp caffarelli with remainder on strtfd}
Let $\mathbb{G}$ be a stratified group with N being the dimension of the first stratum. Let $1<p,q<\infty$, $0<r<\infty$, with $p+q\geq r$, $\delta \in [0, 1] \cap \left[\frac{r-q}{r},\frac{p}{r}\right]$ and $b,c \in \mathbb{R}$. Assume that 
$$\frac{\delta r}{p}+\frac{(1-\delta)r}{q}=1 \ \ and  \ \ c=-\frac{N}{p}\delta+b(1-\delta).$$
Then we have the following Caffarelli-Kohn-Nirenberg type inequality with a logarithmic weight for any complex-valued function $f\in C^\infty_0(\mathbb{G} \backslash \{{x'=0}\})$:
\small    
\begin{multline} \label{Lp c.caf with remainder on strtfd}
\||x'|^c f\|_{L^r(\mathbb{G})}\leq p^\delta \Bigg(\Bigg\|\frac{\log|x'|(x'\cdot \nabla_H) f}{|x'|^\frac{N}{p}}\Bigg\|_{L^p(\mathbb{G})}^p\\-\frac{1}{p^p}\int_\mathbb{G}C_p\left(\frac{p\log|x'|(x'\cdot \nabla_H) f}{|x'|^\frac{N}{p}} ,\frac{p(\log|x'|)^{-\frac{1}{p}+1}}{|x'|^\frac{N}{p}}(x'\cdot\nabla_H)\left(f(\log|x'|)^\frac{1}{p}\right)\right)dx\Bigg)^\frac{\delta}{p}\\\times
    \||x'|^bf\|^{1-\delta}_{L^q(\mathbb{G})},
\end{multline} 
\normalsize
where $|x'|$  is the Euclidean norm on $\mathbb{R}^N$, $\nabla_H$ is the horizontal gradient on $\mathbb{G}$. The constant in the inequality \eqref{Lp c.caf with remainder on strtfd} is sharp for $p=q$ with $\frac{N}{p}=-b$ or for $\delta=0,1.$
\end{thm}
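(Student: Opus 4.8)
The plan is to reproduce on $\mathbb{G}$ the three-case argument used for Theorem \ref{Lp caffarelli with remainder}, with the Euclidean remainder identity \eqref{Cp rem. of crit. sob.} replaced everywhere by its stratified counterpart \eqref{rem. of crit. sob. on strtfd} from Theorem \ref{th6.1}. First I would dispose of the two degenerate cases. If $\delta=0$, the constraints $\frac{\delta r}{p}+\frac{(1-\delta)r}{q}=1$ and $c=-\frac{N}{p}\delta+b(1-\delta)$ force $q=r$ and $c=b$, so \eqref{Lp c.caf with remainder on strtfd} reduces to the trivial equality $\||x'|^b f\|_{L^q(\mathbb{G})}=\||x'|^b f\|_{L^q(\mathbb{G})}$. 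If $\delta=1$, the same two constraints give $r=p$ and $c=-\frac{N}{p}$, and then \eqref{Lp c.caf with remainder on strtfd} is precisely the $\frac{1}{p}$-th power of the identity \eqref{rem. of crit. sob. on strtfd}, so there is nothing to prove.

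For the main case $\delta\in(0,1)\cap[\frac{r-q}{r},\frac{p}{r}]$, I would use $c=-\frac{N}{p}\delta+b(1-\delta)$ to write
\[
\||x'|^c f\|_{L^r(\mathbb{G})}=\left(\int_{\mathbb{G}}\frac{|f|^{\delta r}}{|x'|^{\frac{N}{p}\delta r}}\cdot\frac{|f|^{(1-\delta)r}}{|x'|^{-b(1-\delta)r}}\,dx\right)^{\frac{1}{r}},
\]
and then apply Hölder's inequality with the conjugate exponents $\frac{p}{\delta r}$ and $\frac{q}{(1-\delta)r}$; these are conjugate exactly because $\frac{\delta r}{p}+\frac{(1-\delta)r}{q}=1$, and they are both $\geq 1$ precisely because $\delta\in[\frac{r-q}{r},\frac{p}{r}]$ (with $p+q\geq r$ ensuring this interval is nonempty). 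This yields
\[
\||x'|^c f\|_{L^r(\mathbb{G})}\leq\left\|\frac{f}{|x'|^{\frac{N}{p}}}\right\|_{L^p(\mathbb{G})}^{\delta}\left\|\frac{f}{|x'|^{-b}}\right\|_{L^q(\mathbb{G})}^{1-\delta}.
\]
Substituting the $\delta$-th power of the stratified identity \eqref{rem. of crit. sob. on strtfd} for $\|f/|x'|^{N/p}\|_{L^p(\mathbb{G})}^{\delta}$ then produces exactly the right-hand side of \eqref{Lp c.caf with remainder on strtfd}.

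It remains to address sharpness. For $\delta=0$ the inequality is an equality, and for $\delta=1$ it is the identity \eqref{rem. of crit. sob. on strtfd}, so there is nothing to check. In the remaining case, assume in addition $q=p$ and $\frac{N}{p}=-b$; then $|x'|^{-N}=|x'|^{bq}$, the integrands $\frac{|f|^p}{|x'|^N}$ and $\frac{|f|^q}{|x'|^{-bq}}$ coincide, and Hölder's equality condition holds for every admissible $f$. Combined with the sharpness of the constant $p$ in \eqref{sob.on.strt}, equivalently in \eqref{rem. of crit. sob. on strtfd}, recorded in Theorem \ref{th6.1}, this yields the sharpness of $p^\delta$ in \eqref{Lp c.caf with remainder on strtfd}. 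I do not anticipate a genuine obstacle here: the only real analytic inputs---the integration by parts against $\log|x'|$ on $\mathbb{G}$ via the divergence theorem for the fields \eqref{inv.vec.fields}, and the non-negativity and vanishing analysis of $C_p$---are already carried out in Theorem \ref{th6.1}, so the remaining work is the Hölder bookkeeping identical to the Euclidean proof; the single point deserving a line of care is verifying that the admissibility range of $\delta$ makes the Hölder exponents legitimate.
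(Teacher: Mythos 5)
Your proposal is correct and follows exactly the route the paper intends: the paper omits a formal proof of this theorem, stating that the Euclidean argument of Theorem \ref{Lp caffarelli with remainder} carries over verbatim, and your three-case argument (trivial case $\delta=0$, the identity \eqref{rem. of crit. sob. on strtfd} for $\delta=1$, and H\"older with exponents $\frac{p}{\delta r}$, $\frac{q}{(1-\delta)r}$ plus substitution of the $\delta$-th power of the stratified identity in the intermediate case) together with the sharpness argument via H\"older equality when $p=q$, $\frac{N}{p}=-b$ is precisely that adaptation. No gaps.
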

\begin{rem}
By dropping the remainder term on the right-hand side of the inequality \eqref{Lp c.caf with remainder on strtfd}, we obtain a Caffarelli-Kohn-Nirenberg type inequality as in Corollary \ref{th8.1}, which corresponds to the critical case ($a=-\frac{N}{p}+1$) of the inequality from \cite[Theorem 4.1]{RSY17a} when the Cauchy-Schwarz inequality is applied to the right-hand side.
\end{rem}
\begin{cor} \label{th8.1}
Let $\mathbb{G}$ be a stratified group with N being the dimension of the first stratum, $1<p,q<\infty$, $0<r<\infty$, with $p+q\geq r$, $\delta \in [0, 1] \cap \left[\frac{r-q}{r},\frac{p}{r}\right]$ and $b,c \in \mathbb{R}$. Assume that 
$$\frac{\delta r}{p}+\frac{(1-\delta)r}{q}=1 \ \ and  \ \ c=-\frac{N}{p}\delta+b(1-\delta).$$
Then we have the following critical Caffarelli-Kohn-Nirenberg type inequality for all $f\in C^\infty_0(\mathbb{G} \backslash \{{x'=0}\})$:
    
\begin{equation} \label{c.caf on strtf}
\||x'|^c f\|_{L^r(\mathbb{G})}\leq p^\delta \||x'|^{-\frac{N}{p}} \log|x'| (x'\cdot\nabla_H)f \|^\delta_{L^p(\mathbb{G})}\||x'|^bf\|^{1-\delta}_{L^q(\mathbb{G})},
\end{equation} where $|x'|$  is the Euclidean norm on $\mathbb{R}^N$, $\nabla_H$ is the horizontal gradient on $\mathbb{G}$. The constant in the inequality (\ref{c.caf on strtf}) is sharp for $p=q$ with $\frac{N}{p}=-b$ or for $\delta=0,1.$
\end{cor}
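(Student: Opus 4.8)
The plan is to derive \eqref{c.caf on strtf} directly from Theorem \ref{Lp caffarelli with remainder on strtfd}: the functional $C_p$ in \eqref{Cp} is nonnegative, so dropping the remainder integral on the right-hand side of \eqref{Lp c.caf with remainder on strtfd} immediately yields \eqref{c.caf on strtf}, and the sharpness assertion is inherited verbatim from that of Theorem \ref{Lp caffarelli with remainder on strtfd}. For a self-contained argument one instead reproduces on $\mathbb{G}$ the three-case scheme used for the Euclidean Theorem \ref{Lp caffarelli with remainder}, now invoking the stratified critical Sobolev type inequality \eqref{sob.on.strt} of Theorem \ref{th6.1} in place of its Euclidean counterpart; since $|x'|$ is the Euclidean norm on the first stratum and $x'\cdot\nabla_H$ is the associated Euler operator there, no new difficulty arises relative to the Euclidean case.

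Concretely, in the degenerate case $\delta=0$ the two constraints force $q=r$ and $b=c$, so \eqref{c.caf on strtf} is the trivial equality $\||x'|^b f\|_{L^q(\mathbb{G})}\le\||x'|^b f\|_{L^q(\mathbb{G})}$. In the case $\delta=1$ the constraints give $r=p$ and $c=-N/p$, so \eqref{c.caf on strtf} is precisely \eqref{sob.on.strt}. For $\delta\in(0,1)\cap\left[\frac{r-q}{r},\frac{p}{r}\right]$, using $c=-\frac{N}{p}\delta+b(1-\delta)$ one writes
\[
\||x'|^c f\|_{L^r(\mathbb{G})}=\left(\int_{\mathbb{G}}\left(\frac{|f|}{|x'|^{N/p}}\right)^{\delta r}\left(\frac{|f|}{|x'|^{-b}}\right)^{(1-\delta)r}dx\right)^{1/r},
\]
and applies H\"older's inequality with exponents $\frac{p}{\delta r}$ and $\frac{q}{(1-\delta)r}$, which are conjugate exactly because $\frac{\delta r}{p}+\frac{(1-\delta)r}{q}=1$ and are both $\ge 1$ exactly in the assumed range of $\delta$. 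This produces $\||x'|^c f\|_{L^r(\mathbb{G})}\le\left\|\frac{f}{|x'|^{N/p}}\right\|_{L^p(\mathbb{G})}^{\delta}\left\|\frac{f}{|x'|^{-b}}\right\|_{L^q(\mathbb{G})}^{1-\delta}$, and estimating the first factor by \eqref{sob.on.strt} gives \eqref{c.caf on strtf}.

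For the sharpness statement, the cases $\delta=0,1$ are covered by the above reductions ($\delta=1$ delivering the sharp constant from \eqref{sob.on.strt}, which is sharp by Theorem \ref{th6.1}). When $p=q$ and $N/p=-b$, the H\"older step is applied with equal exponents and with the two integrands coinciding, hence it is an equality for every admissible $f$; the constant in \eqref{c.caf on strtf} is then exactly that of \eqref{sob.on.strt}, and is therefore sharp. The main point requiring care, as in the Euclidean argument, is the exponent bookkeeping: one must verify that the weight constraint $c=-\frac{N}{p}\delta+b(1-\delta)$ is precisely what makes the factorization of $|x'|^{cr}|f|^r$ algebraically consistent, and that the hypotheses $\frac{\delta r}{p}+\frac{(1-\delta)r}{q}=1$, $p+q\ge r$, and $\delta\in\left[\frac{r-q}{r},\frac{p}{r}\right]$ are exactly what guarantee the H\"older exponents are conjugate and $\ge 1$.
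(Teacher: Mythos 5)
Your proposal is correct and follows essentially the paper's own route: the paper obtains Corollary \ref{th8.1} by dropping the nonnegative remainder in Theorem \ref{Lp caffarelli with remainder on strtfd}, whose proof (omitted there) is the same three-case H\"older scheme you reproduce from the Euclidean Theorem \ref{Lp caffarelli with remainder}, with \eqref{sob.on.strt} replacing its Euclidean counterpart. Your sharpness argument (H\"older equality when $p=q$, $\frac{N}{p}=-b$, plus sharpness of $p$ in \eqref{sob.on.strt}) likewise matches the paper's reasoning.
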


\begin{cor}[Uncertainty type principle with a logarithmic weight] \label{uncertainty on strtfd}
    Let $\mathbb{G}$ be a stratified group with N being the dimension of the first stratum.  Then for any $f \in C^\infty_0(\mathbb{G}\backslash\{{x'=0}\})$ and $\frac{1}{N}+\frac{1}{q}=1,$ we have
    \begin{equation} \label{crit. uncertainty on strtfd} 
     \int_{\mathbb{G}}|f|^2dx\leq N\left\||x'|^{-1}\log|x'|(x'\cdot\nabla_H) f\right\|_{L^N(\mathbb{G})} \left\||x'|f\right\|_{L^q(\mathbb{G})},
    \end{equation}
where $|x'|$ is the Euclidean norm on $\mathbb{R}^N$, $\nabla_H$ is horizontal gradient on $\mathbb{G}$.
\end{cor}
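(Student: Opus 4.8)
The plan is to obtain \eqref{crit. uncertainty on strtfd} as a direct specialization of the stratified Caffarelli--Kohn--Nirenberg type inequality \eqref{c.caf on strtf} from Corollary \ref{th8.1} (equivalently, of \eqref{Lp c.caf with remainder on strtfd} after discarding the nonnegative remainder term, since $C_p\geq 0$), mirroring the way Corollary \ref{uncertainty} was deduced in the Euclidean setting. Concretely, I would set
\[
r=2,\qquad c=0,\qquad p=N,\qquad b=1
\]
in Corollary \ref{th8.1}, and let $\delta$ be fixed by the balance condition.

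The first step is to verify that this choice is admissible and pins down $\delta=\tfrac12$. The constraint $\frac{\delta r}{p}+\frac{(1-\delta)r}{q}=1$ becomes $\frac{2\delta}{N}+\frac{2(1-\delta)}{q}=1$; since the hypothesis $\frac1N+\frac1q=1$ gives $q=\frac{N}{N-1}$ (in particular $N\geq 2$, so that $q$ is finite), plugging $\delta=\tfrac12$ returns $\frac1N+\frac1q=1$, which holds. Next, $c=-\frac{N}{p}\delta+b(1-\delta)=-\tfrac12+\tfrac12=0$, consistent with $c=0$. Finally, $\delta=\tfrac12$ does lie in $[0,1]\cap\bigl[\frac{r-q}{r},\frac{p}{r}\bigr]$: indeed $\frac{p}{r}=\frac N2\geq 1$, while $q=\frac{N}{N-1}\in(1,2]$ gives $\frac{r-q}{r}=1-\frac q2\leq\tfrac12$, and $p+q=N+\frac{N}{N-1}\geq r=2$.

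With these parameters, \eqref{c.caf on strtf} becomes
\[
\|f\|_{L^2(\mathbb{G})}\;\leq\;N^{1/2}\left\||x'|^{-1}\log|x'|(x'\cdot\nabla_H) f\right\|_{L^N(\mathbb{G})}^{1/2}\left\||x'|f\right\|_{L^q(\mathbb{G})}^{1/2},
\]
and squaring both sides yields exactly \eqref{crit. uncertainty on strtfd}, since $\|f\|_{L^2(\mathbb{G})}^2=\int_{\mathbb{G}}|f|^2\,dx$. There is no genuinely hard step: the argument is a bookkeeping substitution into an inequality already established for stratified groups, and the only point requiring a moment of care --- as in the Euclidean Corollary \ref{uncertainty} --- is checking that the prescribed values of $r,c,p,b,\delta$ satisfy all the hypotheses of the Caffarelli--Kohn--Nirenberg inequality, which the computations above confirm.
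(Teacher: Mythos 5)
Your proposal is correct and follows essentially the same route as the paper: the paper proves the Euclidean Corollary \ref{uncertainty} by exactly this substitution ($r=2$, $c=0$, $p=N$, $b=1$, $\delta=\tfrac12$) into the Caffarelli--Kohn--Nirenberg inequality with the remainder dropped, and states that the stratified versions follow by the same method. Your additional verification of the admissibility constraints (including the implicit requirement $N\geq 2$ so that $q=\frac{N}{N-1}$ is finite) is accurate and consistent with the paper's argument.
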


By using inequality (\ref{high.or ineq. on strtfd}) we obtain the higher order Caffarelli-Kohn-Nirenberg type inequality with a logarithmic weight on stratified Lie groups.

\begin{cor}
Let $\mathbb{G}$ be a stratified group with N being the dimension of the first stratum. Let $1<q<\infty$, $0<r<\infty$, with $q+2\geq r$, $\delta \in [0, 1] \cap \left[\frac{r-q}{r},\frac{2}{r}\right]$ and $b,c \in \mathbb{R}$. Assume that 
$$\frac{\delta r}{2}+\frac{(1-\delta)r}{q}=1 \ \ and  \ \ c=-\frac{N}{2}\delta+b(1-\delta).$$
Then we have the following higher order Caffarelli-Kohn-Nirenberg type inequality with a logarithmic weight for all $f\in C^\infty_0(\mathbb{G} \backslash \{{x'=0}\})$ and for any $k\in\mathbb{N}$:
\begin{equation} \label{h.c.caf on strtfd}
\||x'|^c f\|_{L^r(\mathbb{G})}\leq \bigg(\frac{2^k}{a(k)}\bigg)^\delta \||x'|^\frac{-N}{2}(\log|x'|)^k(x'\cdot \nabla_H)^k f\|_{L^2(\mathbb{G})}^\delta \||x'|^bf\|^{1-\delta}_{L^q(\mathbb{G})},
\end{equation} 
where $|x'|$ is the Euclidean norm on $\mathbb{R}^N$, $\nabla_H$ is the horizontal gradient on $\mathbb{G}$ and $a(k)=(2k-1)!!$ is the double factorial of an odd number.
\end{cor}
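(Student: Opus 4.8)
The plan is to mimic verbatim the derivation of the Euclidean higher order Caffarelli--Kohn--Nirenberg inequality \eqref{h.c.caf}, replacing the Euclidean gradient $\nabla_N$ by the horizontal gradient $\nabla_H$ and the integral over $\mathbb{R}^n$ by the integral over $\mathbb{G}$ against the Haar measure $dx$; the two ingredients this needs, namely a H\"older splitting and the higher order inequality \eqref{high.or ineq. on strtfd}, are both already available on stratified groups. First I would dispose of the two endpoints. If $\delta=0$, the relations $\tfrac{\delta r}{2}+\tfrac{(1-\delta)r}{q}=1$ and $c=-\tfrac{N}{2}\delta+b(1-\delta)$ force $r=q$ and $c=b$, so \eqref{h.c.caf on strtfd} becomes the trivial equality; if $\delta=1$, they force $r=2$ and $c=-\tfrac{N}{2}$, and \eqref{h.c.caf on strtfd} is exactly the higher order inequality \eqref{high.or ineq. on strtfd} of Corollary \ref{higher order inequality on strtfd}. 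Hence only $\delta\in(0,1)\cap\bigl[\tfrac{r-q}{r},\tfrac{2}{r}\bigr]$ remains.

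For that interior range, using $c=-\tfrac{N}{2}\delta+b(1-\delta)$ I would write
\[
\bigl\||x'|^c f\bigr\|_{L^r(\mathbb{G})}
=\left(\int_{\mathbb{G}}\left(\frac{|f|^{2}}{|x'|^{N}}\right)^{\frac{\delta r}{2}}\left(\frac{|f|^{q}}{|x'|^{-bq}}\right)^{\frac{(1-\delta)r}{q}}dx\right)^{\frac{1}{r}},
\]
and then apply H\"older's inequality with the conjugate exponents $\tfrac{2}{\delta r}$ and $\tfrac{q}{(1-\delta)r}$: these are both $\ge 1$ precisely because $\delta\le\tfrac{2}{r}$ and $\delta\ge\tfrac{r-q}{r}$ (which, together with $q+2\ge r$, is exactly the hypothesis on the interval), and they are conjugate precisely because $\tfrac{\delta r}{2}+\tfrac{(1-\delta)r}{q}=1$. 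This yields
\[
\bigl\||x'|^c f\bigr\|_{L^r(\mathbb{G})}\le\left\|\frac{f}{|x'|^{\frac{N}{2}}}\right\|_{L^2(\mathbb{G})}^{\delta}\bigl\||x'|^{b}f\bigr\|_{L^q(\mathbb{G})}^{1-\delta},
\]
which is the stratified analogue of \eqref{holder.r,p,q} for $p=2$.

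Finally I would insert the higher order critical Sobolev type inequality on stratified groups, \eqref{high.or ineq. on strtfd},
\[
\left\|\frac{f}{|x'|^{\frac{N}{2}}}\right\|_{L^2(\mathbb{G})}\le\frac{2^k}{a(k)}\left\|\frac{(\log|x'|)^k(x'\cdot\nabla_H)^k f}{|x'|^{\frac{N}{2}}}\right\|_{L^2(\mathbb{G})},
\]
raise it to the nonnegative power $\delta$, and substitute into the previous display to obtain \eqref{h.c.caf on strtfd}. The only point requiring real care is the bookkeeping of the parameter ranges in the H\"older step --- checking that $\delta\in[0,1]\cap[\tfrac{r-q}{r},\tfrac{2}{r}]$ together with $q+2\ge r$ makes both conjugate exponents admissible --- but this is identical to the verification already carried out for Theorem \ref{Lp caffarelli with remainder} with $p=2$, and everything else is a routine transcription of that argument to the stratified setting, where the integration by parts underlying \eqref{high.or ineq. on strtfd} is valid by the divergence theorem for the fields $X_j$.
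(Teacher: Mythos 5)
Your proposal is correct and follows exactly the route the paper intends: the Hölder splitting with exponents $\frac{2}{\delta r}$ and $\frac{q}{(1-\delta)r}$ (the stratified, $p=2$ analogue of \eqref{holder.r,p,q}, with the endpoint cases $\delta=0,1$ treated separately as in Theorem \ref{Lp caffarelli with remainder}), followed by inserting the higher order inequality \eqref{high.or ineq. on strtfd} raised to the power $\delta$. This matches the paper, which states the corollary without proof precisely because it is obtained by substituting \eqref{high.or ineq. on strtfd} into that Hölder estimate.
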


Now, let us present the aforementioned results on homogeneous Lie groups.
\vspace{4pt}

\begin{thm} \label{Lp caffarelli with remainder on hom}
Let $\mathbb{G}$ be a homogeneous group of homogeneous dimension $Q$, and let $|\cdot|$ be any homogeneous quasi-norm on $\mathbb{G}$. Let $1<p,q<\infty$, $0<r<\infty$ with $p+q\geq r$, $\delta \in [0, 1] \cap \left[\frac{r-q}{r},\frac{p}{r}\right]$ and $b,c \in \mathbb{R}$. Assume that 
$$\frac{\delta r}{p}+\frac{(1-\delta)r}{q}=1 \ \ and  \ \ c=-\frac{Q}{p}\delta+b(1-\delta).$$
Then we have the following Caffarelli-Kohn-Nirenberg type inequality with a logarithmic weight for any complex-valued function $f\in C^\infty_0(\mathbb{G} \backslash \{{0}\})$:
\small    
\begin{multline} \label{Lp c.caf with remainder on hom}
\||x|^c f\|_{L^r(\mathbb{G})}\leq p^\delta \Bigg(\Bigg\|\frac{\log|x|\mathcal{R}_{|x|}f}{|x|^{\frac{Q}{p}-1}}\Bigg\|_{L^p(\mathbb{G})}^p\\-\frac{1}{p^p}\int_\mathbb{G}C_p\left(\frac{p\log|x|\mathcal{R}_{|x|}f}{|x|^{\frac{Q}{p}-1}},\frac{p(\log|x|)^{-\frac{1}{p}+1}}{|x|^{\frac{Q}{p}-1}}\mathcal{R}_{|x|}\left(f(\log|x|)^\frac{1}{p}\right)\right)dx\Bigg)^\frac{\delta}{p}
    \||x|^bf\|^{1-\delta}_{L^q(\mathbb{G})},
\end{multline} 
\normalsize
where $|x|$  is any homogeneous quasi-norm on $\mathbb{G}$, $\mathcal{R}_{|x|}$ is the radial derivative on $\mathbb{G}$. The constant in the inequality \eqref{Lp c.caf with remainder on hom} is sharp for $p=q$ with $\frac{Q}{p}=-b$ or for $\delta=0,1.$  
\end{thm}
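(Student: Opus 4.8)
The plan is to follow the proof of Theorem~\ref{Lp caffarelli with remainder} essentially verbatim, replacing the Euclidean remainder identity \eqref{Cp rem. of crit. sob.} of Theorem~\ref{th2.1} by its homogeneous-group counterpart \eqref{rem. of crit. sob. on hom} of Theorem~\ref{th.on hom.}. All of the analytic input --- the integration by parts against the radial derivative $\mathcal{R}_{|x|}$ and the polar decomposition on $\mathbb{G}$ --- is already packaged inside Theorem~\ref{th.on hom.}, so the only additional ingredient is an elementary Hölder interpolation performed on $\mathbb{G}$.

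First I would dispose of the two endpoint values of $\delta$. If $\delta=0$, the constraints $\frac{(1-\delta)r}{q}=1$ and $c=-\frac{Q}{p}\delta+b(1-\delta)$ force $q=r$ and $c=b$, so \eqref{Lp c.caf with remainder on hom} degenerates to the trivial estimate $\||x|^b f\|_{L^q(\mathbb{G})}\le\||x|^b f\|_{L^q(\mathbb{G})}$. If $\delta=1$, then $r=p$ and $c=-\frac{Q}{p}$, and \eqref{Lp c.caf with remainder on hom} is exactly the identity \eqref{rem. of crit. sob. on hom} of Theorem~\ref{th.on hom.} raised to the power $1/p$, hence holds with equality.

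For $\delta\in(0,1)\cap\left[\frac{r-q}{r},\frac{p}{r}\right]$, I would use $c=-\frac{Q}{p}\delta+b(1-\delta)$ to factor
\[
\||x|^c f\|_{L^r(\mathbb{G})}^r=\int_{\mathbb{G}}\frac{|f|^{\delta r}}{|x|^{\frac{Q}{p}\delta r}}\cdot\frac{|f|^{(1-\delta)r}}{|x|^{-b(1-\delta)r}}\,dx,
\]
and apply Hölder's inequality on $\mathbb{G}$ with the conjugate exponents $\frac{p}{\delta r}$ and $\frac{q}{(1-\delta)r}$; these are admissible --- each at least $1$ and mutually conjugate --- precisely because $\frac{\delta r}{p}+\frac{(1-\delta)r}{q}=1$ together with $\delta\in\left[\frac{r-q}{r},\frac{p}{r}\right]$, the latter interval being nonempty thanks to $p+q\ge r$. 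This yields
\[
\||x|^c f\|_{L^r(\mathbb{G})}\le\left\|\frac{f}{|x|^{Q/p}}\right\|_{L^p(\mathbb{G})}^{\delta}\left\|\frac{f}{|x|^{-b}}\right\|_{L^q(\mathbb{G})}^{1-\delta}.
\]
I would then raise the identity \eqref{rem. of crit. sob. on hom} to the power $\delta/p$ to rewrite $\left\|f/|x|^{Q/p}\right\|_{L^p(\mathbb{G})}^{\delta}$ as $p^{\delta}$ times the bracketed quantity appearing in \eqref{Lp c.caf with remainder on hom}, and substitute to obtain the claimed inequality. For the sharpness statement: when $p=q$ and $\frac{Q}{p}=-b$ the two factors in the Hölder step coincide with the single function $|f|^p/|x|^Q$, so that step is an equality for every admissible $f$, and the whole chain reduces to the identity \eqref{rem. of crit. sob. on hom}, whence the constant cannot be improved; the cases $\delta=0,1$ are equalities as already noted.

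The only point that is not a pure transcription of the Euclidean argument is verifying that the Hölder exponents $\frac{p}{\delta r}$ and $\frac{q}{(1-\delta)r}$ are admissible throughout the full parameter range $\delta\in[0,1]\cap\left[\frac{r-q}{r},\frac{p}{r}\right]$ with $p+q\ge r$; everything else follows from Theorem~\ref{th.on hom.} and the polar decomposition recorded in Section~2.
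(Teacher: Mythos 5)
Your proposal is correct and coincides with the paper's own proof: the same treatment of the endpoint cases $\delta=0,1$, the same Hölder interpolation with exponents $\frac{p}{\delta r}$ and $\frac{q}{(1-\delta)r}$ giving $\||x|^c f\|_{L^r(\mathbb{G})}\le\|f/|x|^{Q/p}\|_{L^p(\mathbb{G})}^{\delta}\|\,|x|^{b}f\|_{L^q(\mathbb{G})}^{1-\delta}$, the same substitution of the identity \eqref{rem. of crit. sob. on hom} raised to the power $\delta/p$, and the same sharpness argument via the Hölder equality condition when $p=q$ and $\frac{Q}{p}=-b$.
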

\begin{rem} Dropping the remainder term in \eqref{Lp c.caf with remainder on hom} implies results \cite[Theorem 7.1]{RSY18a} and \cite[Theorem 2.1]{RSY17b}.  
\end{rem}

\begin{proof}[Proof of Theorem \ref{Lp caffarelli with remainder on hom}.]
Case $\delta=0$. In this case, we have $q=r$ and $b=c$ by $\frac{\delta r}{2}+\frac{(1-\delta)r}{q}=1$ and $c=-\frac{Q}{2}\delta+b(1-\delta),$ respectively. Then, the inequality (\ref{Lp c.caf with remainder on hom}) reduces to the trivial estimate
$$\||x|^b f\|_{L^q(\mathbb{G})}\leq \|x|^bf\|_{L^q(\mathbb{G})}.$$
Case $\delta=1$. Notice that in this case, $r=p$ and $c=-\frac{Q}{p}$. By \eqref{rem. of crit. sob. on hom}, we have the equality
\begin{multline}
\left\|\frac{f}{|x|^\frac{Q}{p}}\right\|_{L^p(\mathbb{G})}= p\Bigg(\left\|\frac{\log|x|\mathcal{R}_{|x|} f}{|x|^{\frac{Q}{p}-1}} \right\|_{L^p(\mathbb{G})}^p\\-\frac{1}{p^p}\int_{\mathbb{G}}C_p\left(\frac{p\log|x|\mathcal{R}_{|x|}f}{|x|^{\frac{Q}{p}-1}},\frac{p(\log|x|)^{-\frac{1}{p}+1}}{|x|^{\frac{Q}{p}-1}}\mathcal{R}_{|x|}\left(f(\log|x|)^\frac{1}{p}\right)\right)dx\Bigg)^\frac{1}{p}.
\end{multline}
Case $\delta\in(0,1)\cap \left [\frac{r-q}{r},\frac{p}{r}\right] $. Taking into account $c=-\frac{Q}{p}\delta+b(1-\delta)$, a direct calculation gives 
$$\||x|^c f\|_{L^r(\mathbb{G})} = \left( \int_{\mathbb{G}}|x|^{cr}|f|^rdx \right)^\frac{1}{r} = \left( \int_{\mathbb{G}}\frac{|f|^{\delta r}}{|x|^{\frac{Q}{p}\delta r}}\frac{|f|^{(1-\delta)r}}{|x'|^{-b(1-\delta)r}}dx \right)^\frac{1}{r}.$$
Since we have $\delta\in(0,1)\cap \left [\frac{r-q}{r},\frac{p}{r}\right]$ and $q+p\geq r$, then by using H\"older's inequality for $\frac{\delta r}{p}+\frac{(1-\delta)r}{q}=1$, we obtain 
\begin{equation} \label{holder.r,p,q on hom}
\begin{split}
\||x|^c f\|_{L^r(\mathbb{G})}&\leq \left(\int_{\mathbb{G}}\frac{|f|^p}{|x|^Q}dx\right)^\frac{\delta}{p}\left(\int_{\mathbb{G}}\frac{|f|^q}{|x|^{-bq}}dx\right)^\frac{1-\delta}{q} \\ &= \Bigg \|\frac{f}{|x|^\frac{Q}{p}}\Bigg \|^\delta_{L^p}\Bigg \|\frac{f}{|x|^{-b}}\Bigg \|^{1-\delta}_{L^q}.
\end{split}
\end{equation}

By (\ref{rem. of crit. sob. on hom}) we have the identity
\begin{multline} \label{rem. of crit. sob. ^delta on hom}
\left\|\frac{f}{|x|^\frac{Q}{p}}\right\|_{L^p(\mathbb{G})}^\delta = p^\delta \Bigg(\left\|\frac{\log|x|\mathcal{R}_{|x|}f}{|x|^{\frac{Q}{p}-1}}\right\|_{L^p(\mathbb{G})}^p\\-\frac{1}{p^p}\int_{\mathbb{G}}C_p\left(\frac{p\log|x|\mathcal{R}_{|x|}f}{|x|^{\frac{Q}{p}-1}},\frac{p(\log|x|)^{-\frac{1}{p}+1}}{|x|^{\frac{Q}{p}-1}}\mathcal{R}_{|x|}\left(f(\log|x|)^\frac{1}{p}\right)\right)dx\Bigg)^\frac{\delta}{p}.
\end{multline}
Putting this in \eqref{holder.r,p,q on hom}, one has
\small
\begin{multline}
\||x|^c f\|_{L^r(\mathbb{G})}\leq p^\delta \Bigg(\left\|\frac{\log|x|\mathcal{R}_{|x|}f}{|x|^{\frac{Q}{p}-1}}\right\|_{L^p(\mathbb{G})}^p\\-\frac{1}{p^p}\int_{\mathbb{G}}C_p\left(\frac{p\log|x|\mathcal{R}_{|x|}f}{|x|^{\frac{Q}{p}-1}},\frac{p(\log|x|)^{-\frac{1}{p}+1}}{|x|^{\frac{Q}{p}-1}}\mathcal{R}_{|x|}\left(f(\log|x|)^\frac{1}{p}\right)\right)dx\Bigg)^\frac{\delta}{p}\Bigg \|\frac{f}{|x|^{-b}}\Bigg \|^{1-\delta}_{L^q}.
\end{multline}
\normalsize
In the case $q=p$ and $\frac{Q}{p}=-b$, H\"older's equality condition for the inequality \eqref{holder.r,p,q on hom} holds true for any function. Therefore, the constant $p^{\delta}$ in \eqref{Lp c.caf with remainder on hom} is sharp when $q=p$ and $\frac{Q}{p}=-b$.
\end{proof}

\begin{cor}[Uncertainty type principle with a logarithmic weight] \label{uncertainty on hom}
    Let $\mathbb{G}$ be a homogeneous group of homogeneous dimension $Q$, and let $|\cdot|$ be any homogeneous quasi-norm on $\mathbb{G}$. Let $\mathcal{R}_{|x|}=\frac{d}{d|x|}$ be the radial derivative. Then for any $f \in C^\infty_0(\mathbb{G}\backslash\{{0}\})$ and $\frac{1}{Q}+\frac{1}{q}=1,$ we have
    \begin{equation} \label{crit. uncertainty on hom} 
     \int_{\mathbb{G}}|f|^2dx\leq Q\left\|\log|x|\mathcal{R}_{|x|} f\right\|_{L^Q(\mathbb{G})} \left\||x|f\right\|_{L^q(\mathbb{G})}.
    \end{equation}

\end{cor}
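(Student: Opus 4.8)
The plan is to obtain Corollary~\ref{uncertainty on hom} as a direct specialization of the Caffarelli--Kohn--Nirenberg type inequality \eqref{Lp c.caf with remainder on hom} of Theorem~\ref{Lp caffarelli with remainder on hom}, in exact analogy with the way Corollary~\ref{uncertainty} was deduced from Theorem~\ref{Lp caffarelli with remainder} in the Euclidean setting. Concretely, I would choose the parameters $r=2$, $c=0$, $p=Q$ and $b=1$ in \eqref{Lp c.caf with remainder on hom}, and then discard the nonnegative remainder term.

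First I would pin down $\delta$. The constraint $c=-\frac{Q}{p}\delta+b(1-\delta)$ becomes $0=-\delta+(1-\delta)$, forcing $\delta=\frac12$. With this value the balance condition $\frac{\delta r}{p}+\frac{(1-\delta)r}{q}=1$ reads $\frac1Q+\frac1q=1$, which is precisely the hypothesis of the corollary; in particular this fixes $q=\frac{Q}{Q-1}$, so that $1<q<\infty$ (here one uses the implicit assumption $Q>1$, mirroring $1<N\leq n$ in the Euclidean Corollary~\ref{uncertainty}). Next I would check the admissibility requirements of Theorem~\ref{Lp caffarelli with remainder on hom}: one has $p+q=Q+\frac{Q}{Q-1}\geq 2=r$ for $Q>1$, and $\delta=\frac12\in[0,1]\cap\left[\frac{r-q}{r},\frac{p}{r}\right]$ since $\frac{r-q}{r}=\frac{Q-2}{2(Q-1)}\leq\frac12\leq\frac{Q}{2}=\frac{p}{r}$.

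Having verified admissibility, I would substitute these parameter values into \eqref{Lp c.caf with remainder on hom} and drop the integral of $C_p$, which is nonnegative by \eqref{Cp}. The weight exponents simplify: $|x|^{\frac{Q}{p}-1}=|x|^{0}=1$ and $|x|^{c}=|x|^{0}=1$, so the inequality reduces to
\begin{equation*}
\|f\|_{L^2(\mathbb{G})}\leq Q^{\frac12}\,\big\|\log|x|\,\mathcal{R}_{|x|}f\big\|_{L^Q(\mathbb{G})}^{\frac12}\,\big\||x|f\big\|_{L^q(\mathbb{G})}^{\frac12}.
\end{equation*}
Squaring both sides yields \eqref{crit. uncertainty on hom}.

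There is essentially no serious obstacle: the statement is a corollary of an already-established inequality, and the only point requiring care is the routine bookkeeping of the weight exponents together with the check that the forced values $\delta=\frac12$ and $q=\frac{Q}{Q-1}$ lie in the admissible parameter range of Theorem~\ref{Lp caffarelli with remainder on hom}; one should also record the implicit restriction $Q>1$ (equivalently $q<\infty$). If one wished, the sharp-remainder identity \eqref{rem. of crit. sob. on hom} could be retained rather than dropped, producing a stability-type refinement of \eqref{crit. uncertainty on hom}, but this is not needed for the stated result.
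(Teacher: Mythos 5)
Your proposal is correct and follows exactly the paper's own argument: specialize Theorem \ref{Lp caffarelli with remainder on hom} with $r=2$, $c=0$, $p=Q$, $b=1$ (forcing $\delta=\tfrac12$ and $\tfrac1Q+\tfrac1q=1$), drop the nonnegative remainder, and square. Your additional verification of the admissibility conditions and the implicit restriction $Q>1$ is a harmless elaboration of the same route.
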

\begin{proof}[Proof of Corollary \ref{uncertainty on hom}]
Dropping the remainder term in \eqref{Lp c.caf with remainder on hom} and taking $r=2$, $c=0$, $p=Q$, $b=1$, and $\delta=\frac{1}{2}$ implies   
    \begin{equation*}   
    \|f\|_{L^2(\mathbb{G})}\leq Q^\frac{1}{2} \Bigg\|\log|x|\mathcal{R}_{|x|}f\Bigg\|^{\frac{1}{2}}_{L^Q(\mathbb{G})}\||x|f\|^\frac{1}{2}_{L^q(\mathbb{G})},
    \end{equation*}
which implies \eqref{crit. uncertainty on hom}.
\end{proof}
\begin{rem}
In case $Q=q=2$, the inequality \eqref{crit. uncertainty on hom} reduces to 
\begin{equation} \label{critical case of HPW on hom}     \left(\int_{\mathbb{G}}|f|^2dx\right)^2 \leq 4 \int_{\mathbb{G}} (\log|x|)^2\left|\mathcal{R}_{|x|} f\right|^2dx\int_{\mathbb{G}}|x|^2 |f|^2dx,
\end{equation}
where $|x|$ is any homogeneous quasi-norm on $\mathbb{G}$ and $\mathcal{R}_{|x|}$ is the radial derivative on $\mathbb{G}$. The inequality \eqref{critical case of HPW on hom} is the critical case of the Heisenberg-Pauli-Weyl uncertainty type principle from \cite[Proposition 2.9]{RS17a}.
\end{rem}

By substituting inequality (\ref{high.or ineq. on hom}) into $p=2$ version of \eqref{holder.r,p,q on hom} we obtain the higher order Caffarelli-Kohn-Nirenberg type inequality with a logarithmic weight on homogeneous Lie groups.

\begin{cor}
Let $\mathbb{G}$ be a homogeneous group of homogeneous dimension $Q$, and let $|\cdot|$ be any homogeneous quasi-norm on $\mathbb{G}$. Let $1<q<\infty$, $0<r<\infty$, with $q+2\geq r$, $\delta \in [0, 1] \cap \left[\frac{r-q}{r},\frac{2}{r}\right]$ and $b,c \in \mathbb{R}$. Assume that 
$$\frac{\delta r}{2}+\frac{(1-\delta)r}{q}=1 \ \ and  \ \ c=-\frac{Q}{2}\delta+b(1-\delta).$$
Then we have the following higher order critical Caffarelli-Kohn-Nirenberg type inequality for all $f\in C^\infty_0(\mathbb{G} \backslash \{{0}\})$:
\begin{equation} \label{h.c.caf on hom}
\||x|^c f\|_{L^r(\mathbb{G})}\leq \bigg(\frac{2^k}{a(k)}\bigg)^\delta \||x|^{-\frac{Q}{2}+1}(\log|x|)^k(\mathcal{R}_{|x|})^k f\|_{L^2(\mathbb{G})}^\delta \||x|^bf\|^{1-\delta}_{L^q(\mathbb{G})},
\end{equation} 
where $\mathcal{R}_{|x|}=\frac{d}{d|x|}$ is the radial derivative and $a(k)=(2k-1)!!$ is the double factorial of an odd number.
\end{cor}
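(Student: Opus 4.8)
The plan is to follow verbatim the two–step scheme already used in the proof of Theorem~\ref{Lp caffarelli with remainder on hom}: first apply H\"older's inequality to dominate $\||x|^c f\|_{L^r(\mathbb{G})}$ by a product of an $L^2$–norm with weight $|x|^{-Q/2}$ and an $L^q$–norm with weight $|x|^b$, and then bound the $L^2$–factor by the already established higher–order inequality \eqref{high.or ineq. on hom}. Nothing new about the group structure is needed, since both the polar–coordinate representation and the radial integration by parts underlying \eqref{high.or ineq. on hom} are available on any homogeneous Lie group for any homogeneous quasi-norm.

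First I would dispose of the endpoints. If $\delta=0$, the constraint $\frac{\delta r}{2}+\frac{(1-\delta)r}{q}=1$ forces $q=r$, and then $c=-\frac{Q}{2}\delta+b(1-\delta)=b$, so \eqref{h.c.caf on hom} is the trivial identity $\||x|^b f\|_{L^q(\mathbb{G})}\le \||x|^b f\|_{L^q(\mathbb{G})}$. If $\delta=1$, then $r=2$ and $c=-\frac{Q}{2}$, and \eqref{h.c.caf on hom} is exactly \eqref{high.or ineq. on hom}. Hence I may assume $\delta\in(0,1)\cap\left[\frac{r-q}{r},\frac{2}{r}\right]$, i.e. the case $p=2$ of the range treated in Theorem~\ref{Lp caffarelli with remainder on hom}.

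Next, using $c=-\frac{Q}{2}\delta+b(1-\delta)$, I would write $|x|^{cr}|f|^r=\big(|x|^{-Q}|f|^2\big)^{\frac{\delta r}{2}}\big(|x|^{bq}|f|^q\big)^{\frac{(1-\delta)r}{q}}$ and apply H\"older's inequality with conjugate exponents $\frac{2}{\delta r}$ and $\frac{q}{(1-\delta)r}$; conjugacy is precisely the hypothesis $\frac{\delta r}{2}+\frac{(1-\delta)r}{q}=1$, while the membership $\delta\in\left[\frac{r-q}{r},\frac{2}{r}\right]$ together with $q+2\ge r$ is exactly what forces both exponents to lie in $(1,\infty)$. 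This is the $p=2$ instance of \eqref{holder.r,p,q on hom} and gives
$$\||x|^c f\|_{L^r(\mathbb{G})}\le\left\|\frac{f}{|x|^{Q/2}}\right\|_{L^2(\mathbb{G})}^{\delta}\,\||x|^b f\|_{L^q(\mathbb{G})}^{1-\delta}.$$
Then I would invoke \eqref{high.or ineq. on hom} on the first factor, namely $\big\|\frac{f}{|x|^{Q/2}}\big\|_{L^2(\mathbb{G})}\le\frac{2^k}{a(k)}\big\|\frac{(\log|x|)^k(|x|\mathcal{R}_{|x|})^k f}{|x|^{Q/2}}\big\|_{L^2(\mathbb{G})}$, raise it to the power $\delta$, and substitute into the previous display to obtain \eqref{h.c.caf on hom}.

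The hard part is essentially nonexistent here: all the analytic substance — the polar decomposition, the radial integration by parts, and the iteration that produces the square-of-the-double-factorial constant — is packaged inside \eqref{high.or ineq. on hom}, proved earlier. What remains is only the elementary bookkeeping of verifying that the H\"older exponents $\tfrac{2}{\delta r}$ and $\tfrac{q}{(1-\delta)r}$ are admissible on the stated parameter range, exactly as in the proof of Theorem~\ref{Lp caffarelli with remainder on hom}.
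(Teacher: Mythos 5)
Your argument is correct and is precisely the paper's own route: the corollary is obtained there by substituting the higher-order inequality \eqref{high.or ineq. on hom} into the $p=2$ case of the H\"older step \eqref{holder.r,p,q on hom}, exactly as you do, with the endpoints $\delta=0,1$ trivial as you note. One caveat, which concerns the printed statement rather than your proof: this argument yields the right-hand norm $\bigl\||x|^{-\frac{Q}{2}}(\log|x|)^k(|x|\mathcal{R}_{|x|})^k f\bigr\|_{L^2(\mathbb{G})}$, which coincides with the displayed weight $|x|^{-\frac{Q}{2}+1}(\log|x|)^k(\mathcal{R}_{|x|})^k f$ only for $k=1$, since $(|x|\mathcal{R}_{|x|})^k\neq |x|\,\mathcal{R}_{|x|}^k$ for $k\geq 2$.
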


\section{Appendix}
In this section, we give the proof of Lemma \ref{lemma 1.} and Lemma \ref{lemma 2}.
\begin{proof} [Proof of Lemma \ref{lemma 1.}] We will prove this lemma by induction. \\

Case $\kappa=1.$ It can be easily verified that (\ref{lemma 1}) holds for  $\kappa=1$ as follows:
\begin{multline*}
    \log|x'|(x'\cdot \nabla_N) \big(\log|x'|(x'\cdot \nabla_N) f\big)\\=\log|x'|(x'\cdot \nabla_N\log|x'|)(x'\cdot\nabla_N)f+(\log|x'|^2(x'\cdot\nabla_N)^2f\\=\log|x'|\sum_{j=1}^N x'_j\frac{\frac{x'_j}{|x'|}}{|x'|}(x'\cdot\nabla_N)f+(\log|x'|)^2(x'\cdot\nabla_N)^2f\\=\log|x'|(x'\cdot \nabla_N) f+(\log|x'|)^2(x'\cdot \nabla_N)^2f.
\end{multline*}
Now assuming that (\ref{lemma 1}) holds for $k^{th}$ order, let us show that it also holds for $(k+1)^{th}$ order.\\

Case $\kappa=k+1$. Applying $\log|x'|(x'\cdot\nabla_N)$ to both sides of the $k^{th}$ order of (\ref{lemma 1}) we get
\begin{multline*}
\log|x'|(x'\cdot\nabla_N)\big[(\log|x'|)^k(x'\cdot \nabla_N)^k\big(\log|x'|(x'\cdot \nabla_N\big) f)\big] 
\\=\log|x'|(x'\cdot\nabla_N)\big[k(\log|x'|)^k(x'\cdot \nabla_N)^k f+(\log|x'|)^{k+1}(x'\cdot \nabla_N)^{k+1}f\big].
\end{multline*}    
By simplifying the above equality, we get
\begin{multline} \label{k+1th}
k(\log|x'|)^k(x'\cdot \nabla_N)^k(\log|x'|(x'\cdot \nabla_N)f)+(\log|x'|)^{k+1}(x'\cdot \nabla_N)^{k+1}(\log|x'|(x'\cdot \nabla_N)f)\\=k^2(\log|x'|)^k(x'\cdot \nabla_N)^kf+k(\log|x'|)^{k+1}(x'\cdot \nabla_N)^{k+1}f\\+(k+1)(\log|x'|)^{k+1}(x'\cdot \nabla_N)^{k+1}f+(\log|x'|)^{k+2}(x'\cdot \nabla_N)^{k+2}f.    
\end{multline}
Due to \eqref{lemma 1}: 
\begin{multline} \label{k*kth}
k(\log|x'|)^k(x'\cdot \nabla_N)^k(\log|x'|(x'\cdot \nabla_N)f)\\=k^2(\log|x'|)^k(x'\cdot \nabla_N)^kf+k(\log|x'|)^{k+1}(x'\cdot \nabla_N)^{k+1}f.
\end{multline}
Simplifying \eqref{k*kth} from \eqref{k+1th}  we obtain the $(k+1)^{th}$ order of (\ref{lemma 1}), which is 
\begin{multline*}
(\log|x'|)^{k+1}(x'\cdot \nabla_N)^{k+1}(\log|x'|(x'\cdot \nabla_N)f)\\=(k+1)(\log|x'|)^{k+1}(x'\cdot \nabla_N)^{k+1}f+(\log|x'|)^{k+2}(x'\cdot \nabla_N)^{k+2}f,    
\end{multline*}
completing the proof.
\end{proof}

\begin{proof} [Proof of Lemma \ref{lemma 2}]  
To prove \eqref{recurrence} we will use formula \eqref{coefficient}. As the main formula from \eqref{coefficient} is not applicable for $O(k,1)$ and $O(k,k)$ which are involved in \eqref{recurrence} when $m=2$ and $m=k$ respectively, let us prove \eqref{recurrence} for these cases first.\\

Case $m=2.$ In this case, $O(k,1)=a_k$ from \eqref{coefficient}. So, we need to prove that
\begin{equation} \label{m=2}
4k(k+1)O(k,2)+4a_k=O(k+1,2).    
\end{equation}
For this, by using \eqref{coefficient} and $k(k+1)=o_k$  we write
\begin{multline} \label{k-2}
    4k(k+1)O(k,2)+4a_k\\=\sum_{t=1}^{k-2}4^{k-t+1}a_t\sum_{\underbrace{t+1 \leq i_1<...<i_r\leq k-1}_{k-t-1}}\prod_{j=1}^{k-t-1}o_{i_j}o_k+4^{2}a_{k-1}o_k+4a_k\\=\sum_{t=1}^{k-2}4^{k-t+1}a_t \overbrace{(o_{t+1}o_{t+2}...o_{k-1}o_k)}^{k-t}
    +4^{2}a_{k-1}o_k+4a_k,
\end{multline}
where we have used that
$$\sum_{\underbrace{t+1 \leq i_1<...<i_r\leq k-1}_{k-t-1}}\prod_{j=1}^{k-t-1}o_{i_j}=o_{t+1}o_{t+2}...o_{k-1}$$
since the range of index $j$ and the range of index $i_j$ have the same length.\\
Now using 
$$\sum_{t=1}^{k-2}4^{k-t+1}a_t\sum_{t+1 \leq i_1<...<i_r\leq k} \prod_{j=1}^{k-t}o_{i_j}+4^{2}a_{k-1}o_k=\sum_{t=1}^{k-1}4^{k-t+1}a_t\sum_{t+1 \leq i_1<...<i_r\leq k} \prod_{j=1}^{k-t}o_{i_j},$$
we derive from \eqref{k-2} that
\begin{equation*}
    4k(k+1)O(k,2)+4a_k=\sum_{t=1}^{k-1}4^{k-t+1}a_t\sum_{t+1 \leq i_1<...<i_r\leq k} \prod_{j=1}^{k-t}o_{i_j}+4a_k=O(k+1,2)
\end{equation*}
due to the definition \eqref{coefficient}. Thus, we have proved \eqref{m=2}.\\

Case $m=k.$ In this case, $O(k,k)=4^{k-1}$ from \eqref{coefficient}. Thus we need to prove that
\begin{equation} \label{m=k}
4k(k+1)4^{k-1}+4O(k,k-1)=O(k+1,k).
\end{equation}
By using \eqref{coefficient} and $k(k+1)=o_k$ on the left-hand side of \eqref{m=k}  we have
\begin{multline*}
    4k(k+1)4^{k-1}+4O(k,k-1)\\=
    4k(k+1)4^{k-1}+\sum_{t=1}^{1}4^{k-t+1}a_t\sum_{t+1 \leq i_1<...<i_r\leq k-1} \prod_{j=1}^{2-t}o_{i_j}+4^{k-1}a_{2}\\=4^{k}o_k+4^{k}a_1(o_2+o_3...+o_{k-1})+4^{k-1}a_{2}.
\end{multline*}
As $a_1=1$, we can rewrite the above formula and get
\begin{multline*}
    4k(k+1)4^{k-1}+4O(k,k-1)=4^{k}a_1(o_2+o_3...+o_{k-1}+o_k)+4^{k-1}a_{2}\\=\sum_{t=1}^{1}4^{k-t+1}a_t\sum_{t+1 \leq i_1<...<i_r\leq k} \prod_{j=1}^{2-t}o_{i_j}+4^{k-1}a_{2}=O(k+1,k)
\end{multline*}
due to the definition \eqref{coefficient}. Thus, we have proved \eqref{m=k}.\\

Now we can prove \eqref{recurrence} for $2<m<k.$ By using the formula from \eqref{coefficient} and $o_k=k(k+1)$ on the left-hand side of \eqref{recurrence} we get 

\begin{multline} \label{rec1}
    4k(k+1)O(k,m)+4O(k,m-1)\\=\sum_{t=1}^{k-m}4^{k-t+1}a_t\sum_{t+1 \leq i_1<...<i_r\leq k-1} \prod_{j=1}^{r}o_{i_j}o_k+4^{m}a_{k-m+1}o_k\\+\sum_{t=1}^{k-m+1}4^{k-t+1}a_t\sum_{t+1 \leq i_1<...<i_{r+1}\leq k-1} \prod_{j=1}^{r+1}o_{i_j}+4^{m-1}a_{k-m+2},
\end{multline} 
where $r=k-m-t+1$. \\
By adding summations in \eqref{rec1} for $t=1,...,k-m$ and writing the term of the second summation for $t=k-m+1$ separately we get
\begin{multline} \label{rec2}
    4k(k+1)O(k,m)+4O(k,m-1)\\=\sum_{t=1}^{k-m}4^{k-t+1}a_t\bigg(\underbrace{\sum_{t+1 \leq i_1<...<i_r\leq k-1} \prod_{j=1}^{r}o_{i_j}o_k}_{\sigma_1}+\underbrace{\sum_{t+1 \leq i_1<...<i_{r+1}\leq k-1} \prod_{j=1}^{r+1}o_{i_j}}_{\sigma_2}\bigg)\\+4^{m}a_{k-m+1}o_k+4^{m}a_{k-m+1}\sum_{k-m+2 \leq i_1\leq k-1} \prod_{j=1}^{1}o_{i_j}+4^{m-1}a_{k-m+2}.
\end{multline}
By adding summations $\sigma_1$ and $\sigma_2$ in \eqref{rec2} we get sum of all possible products of $r+1$ Oblong numbers from $t+1$ to $k$, as follows: 
\begin{multline} \label{rec3}
\sigma_1+\sigma_2=\sum_{t+1 \leq i_1<...<i_r\leq k-1} \overbrace{o_{i_1}o_{i_2}...o_{i_r}o_k}^{r+1}+\sum_{t+1 \leq i_1<...<i_{r+1}\leq k-1} \overbrace{o_{i_1}o_{i_2}o_{i_3}...o_{i_{r+1}}}^{r+1}\\=\sum_{t+1 \leq i_1<...<i_{r+1}\leq k} \prod_{j=1}^{r+1}o_{i_j}.  
\end{multline}
Now, by combining \eqref{rec2} and \eqref{rec3} we get
\begin{multline} \label{}
    4k(k+1)O(k,m)+4O(k,m-1)\\=\sum_{t=1}^{k-m}4^{k-t+1}a_t\sum_{t+1 \leq i_1<...<i_{r+1}\leq k} \prod_{j=1}^{r+1}o_{i_j}\\+4^{m}a_{k-m+1}(o_{k-m+2}+...+o_{k-1}+o_k)+4^{m-1}a_{k-m+2}\\=\sum_{t=1}^{k-m}4^{k-t+1}a_t\sum_{t+1 \leq i_1<...<i_{r+1}\leq k} \prod_{j=1}^{r+1}o_{i_j}+4^{m}a_{k-m+1}\sum_{k-m+2 \leq i_1\leq k} \prod_{j=1}^{1}o_{i_j}+4^{m-1}a_{k-m+2}\\=\sum_{t=1}^{k-m+1}4^{k-t+1}a_t\sum_{t+1 \leq i_1<...<i_{r+1}\leq k} \prod_{j=1}^{k-m-t+2}o_{i_j}+4^{m-1}a_{k-m+2}=O(k+1,m),
\end{multline} 
which implies \eqref{recurrence} for $2<m<k$. Thus, we have proved \eqref{recurrence} for $2\leq m\leq k$.
\end{proof}

\end{document}